\DeclareMathAlphabet{\mathpzc}{OT1}{pzc}{m}{it}
\theoremstyle{plain}
\newtheorem*{maintheorem*}{Main Theorem}
\newtheorem*{thm*}{Theorem}
\newtheorem*{thma*}{Theorem A}
\newtheorem*{thmaa*}{Theorem A'}
\newtheorem*{thmb*}{Theorem B}
\newtheorem*{thmo*}{Theorem 1.1}
\newtheorem*{thmc*}{Theorem C}
\newtheorem*{thmd*}{Theorem D}
\newtheorem*{thmf*}{Theorem 4.1}
\newtheorem*{remark*}{Remark}
\newtheorem*{conjecture*}{Conjecture}
\newtheorem*{prop*}{Proposition}
\newtheorem*{lem*}{Basic Lemma}
\newtheorem{thm}{Theorem}[section]
\newtheorem{cor}[thm]{Corollary}
\newtheorem{lem}[thm]{Lemma}
\newtheorem{prop}[thm]{Proposition}
\theoremstyle{definition}
\newtheorem*{proofc*}{Proof of Theorem C}
\newtheorem{definition}[thm]{Definition}
\newtheorem{remark}[thm]{Remark}
\def\bbz{\mathbb{Z}}
\def\bbq{\mathbb{Q}}
\def\bbr{\mathbb{R}}
\def\bbn{\mathbb{N}}
\def\qfrak{\mathfrak{q}}
\def\pfrak{\mathfrak{p}}
\def\afrak{\mathfrak{A}}
\def\bfrak{\mathfrak{B}}
\def\vare{\varepsilon}
\def\Lxi{\Lambda_{\xi}}
\def\Lqxi{\Lambda_{\mathfrak{q}\xi}}
\def\itl{I_t^L(\delta)}
\def\itlone{I_t^{\pi_1(L)}(\delta)}
\def\itltwo{I_t^{\pi_2(L)}(\delta)}
\def\itli{I_t^{\pi_i(L)}(\delta)}
\def\rb{\rangle_B}
\def\rqt{\rangle_{Q_T}}
\def\hrn{H\ltimes\bbr^n}
\def\qtd{\mathcal{Q}_t(\delta,\vare,\tau_2)}
\def\qtv{\mathcal{Q}_t(\delta,\vare)}
\def\atld{\afrak_t^L(\delta,\vare)}
\def\SL{\rm{SL}}
\def\h{\hspace{1mm}}
\def\hh{\hspace{.5mm}}
\title[Inhomogeneous quadratic forms]{\sc Quantitative Version of the Oppenheim Conjecture for Inhomogeneous Quadratic Forms}
\author{G.~A.~ Margulis \& A.~Mohammadi}
\date{}
\thanks {G.M. was partially supported by NSF grant DMS-0801195.}
\address{Mathematics Dept., Yale University, New Haven, CT}
\email{margulis@math.yale.edu}
\address{Mathematics Dept., University of Chicago, Chicago, IL}
\email{amirmo@math.uchicago.edu}
\begin{document}
\maketitle
\begin{abstract}
A quantitative version of the Oppenheim conjecture for inhomogeneous quadratic forms is proved. We also give an application to eigenvalue spacing on flat 2-tori with Aharonov-Bohm flux.
\end{abstract}


\section{Introduction}\label{sec:intro}
Let $Q$ be a nondegenerate indefinite quadratic form on $\bbr^n.$ Let $\xi\in\bbr^n$ be a vector and define the (inhomogeneous) quadratic form $Q_{\xi}$ by
\begin{equation}\label{inhomoform}Q_{\xi}(x)=Q(x+\xi)\hspace{3mm}\mbox{for all}\h\h x\in\bbr^n\end{equation}
We will refer to $Q=Q_{\bf 0}$ as the homogeneous part of $Q_{\xi}.$ We say $Q_\xi$ has signature $(p,q)$ if $Q$ does. Recall that a quadratic form $Q_\xi$ is called irrational if it is not scalar multiple of a form with rational coefficients. In other words $Q_\xi$ is irrational if either $Q$ is irrational as a homogeneous form or if $Q$ is a rational form then $\xi$ is an irrational vector.

Let $\nu$ be a continuous function on the sphere $\{v\in\bbr^n\h:\h\|v\|=1\}.$ Define $\Omega=\{v\in\bbr^n\h:\h \|v\|<\nu(v/{\|v\|})\}$ and let $T\Omega$ be the dilate of $\Omega$ by $T.$ For an indefinite quadratic form $Q$ in $n$ variables and a vector $\xi\in\bbr^n$ we let
\begin{equation}\label{countingfunction} N_{Q,\xi,\Omega}(a, b, T)=\#\h\{x\in\bbz^n\h:\h x\in T\Omega\h\h\mbox{and}\h a<Q_\xi(x)<b\}\end{equation}
If $\xi=\mathbf{0}$ we let $N_{Q,\mathbf{0},\Omega}(a, b, T)=N_{Q,\Omega}(a, b, T).$ It is easy to see that there exists a constant $\lambda_{Q,\Omega}$ such that
\begin{equation}\label{volume} \mbox{Vol}(\{x\in\bbr^n\h:\h x\in T\Omega\h\h\mbox{and}\h a<Q_\xi(x)<b\})\sim\lambda_{Q,\Omega}(b-a)T^{n-2}\end{equation}

A.~Eskin, G.~A.~Margulis and S.~Mozes in~\cite{EMM1} proved

\begin{thm}\label{emm1}{\rm{(\cite[Theorem 2.1]{EMM1})}} Let $Q$ be a quadratic form of signature $(p,q),$ with $p\geq3$ and $q\geq1.$ Suppose $Q$ is not proportional to a rational form. Then for any interval $(a,b)$
\begin{equation}\label{e:emm1}N_{Q,\Omega}(a,b,T)\sim\lambda_{Q,\Omega}(b-a)T^{n-2}\hspace{2mm}{\rm{as}}\h\h T\rightarrow\infty\end{equation}
where $n=p+q$ and $\lambda_{Q,\Omega}$ is as in~(\ref{volume}).
\end{thm}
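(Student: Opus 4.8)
Theorem~\ref{emm1} is due to Eskin, Margulis and Mozes~\cite{EMM1}; since its architecture underlies everything that follows, I outline how one proves it. The plan is to convert the count in~(\ref{e:emm1}) into a statement about the distribution of large pieces of an orbit of $H=\mathrm{SO}(Q)^{\circ}$ on the space of unimodular lattices $X=\SL_n(\bbr)/\SL_n(\bbz)$, and then to resolve that statement with Ratner's theorem together with a quantitative non-divergence estimate. After rescaling one may assume $|\det Q|=1$, so that $H\subset\SL_n(\bbr)=:G$; write $\Gamma=\SL_n(\bbz)$, $X=G/\Gamma$, with $\mu_X$ the $G$-invariant probability measure. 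The first step is a geometric unfolding: since $\bbr^{+}\cdot H$ acts transitively on each of $\{Q>0\}$ and $\{Q<0\}$, one introduces ``polar'' coordinates adapted to $Q$ and, after sandwiching $\mathbf{1}_{\{v\in T\Omega,\,a<Q(v)<b\}}$ between smooth functions that on each fixed level hyperboloid are images under an expanding one-parameter subgroup $a_t\subset H$ (with $e^{t}\asymp T$) of a fixed compactly supported bump $f$, reduces the problem to understanding, for the Siegel transform $\hat f(\Lambda)=\sum_{v\in\Lambda\setminus\{0\}}f(v)$, the averages $\frac{1}{\vol(B_T)}\int_{B_T}\hat f(h\,\bbz^{n})\,dh$, where $B_T\subset H$ is the expanding family of sets cut out by the region (so $\vol(B_T)\asymp T^{n-2}$). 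The target is that these averages converge to $\int_X\hat f\,d\mu_X$, which by Siegel's mean value formula equals $\int_{\bbr^n}f\,dx$ and, after integrating the bump in its level variable over $(a,b)$, accounts exactly for $\la_{Q,\Omega}(b-a)T^{n-2}$.

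With a \emph{bounded} continuous function $F$ in place of $\hat f$, the desired convergence is the equidistribution of $B_T\subset H$ in $X$. Because $H$ is generated by unipotent one-parameter subgroups, one proves this by linearizing (following Dani--Margulis) and invoking Ratner's orbit-closure and measure-classification theorems: the only way the averages could fail to converge to $\mu_X$ is for $H\bbz^{n}$ to be trapped inside a proper closed invariant set coming from a rational subgroup, and this is ruled out precisely because $Q$ is not proportional to a rational form. This already gives the lower bound in~(\ref{e:emm1}): for continuous compactly supported $F\le\hat f$ one has $\liminf\frac{1}{\vol(B_T)}\int_{B_T}\hat f(h\,\bbz^{n})\,dh\ge\int_X F\,d\mu_X$, and letting $F\uparrow\hat f$ produces the correct constant; passing to a slightly shrunken region then yields the lower bound for $N_{Q,\Omega}$ itself.

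The main difficulty is the upper bound, and it is here that the hypothesis $p\ge3$ is essential: $\hat f$ is unbounded near the cusp of $X$, so it cannot be replaced outright by a bounded function. What is needed is a \textbf{uniform integrability estimate}. With $\alpha(\Lambda)=\sup\{d(\Delta)^{-1}\}$, the supremum over nonzero primitive subgroups $\Delta\subseteq\Lambda$ of the reciprocal of the covolume of $\Delta$ in its span (so that $\hat f\ll_f\alpha$), one must show
$$\int_{B_T}\alpha(h\,\bbz^{n})\,dh=O\big(\vol(B_T)\big)\qquad\text{uniformly in }T.$$
I would establish this by a quantitative non-divergence argument of Dani--Margulis type: the maps $h\mapsto d(h\Delta)$ are $(C,\beta)$-good on $B_T$, and a covering argument with induction on $\rank\Delta$ bounds the measure of the portion of $B_T$ on which $h\bbz^{n}$ has some subgroup of small covolume. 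The numerology closes --- giving an $O(\vol(B_T))$ bound rather than something larger --- exactly when, relative to each potentially dangerous $\Delta$, the expanding directions inside $H$ are sufficiently plentiful, which is the case for signature $(p,q)$ with $p\ge3$, $q\ge1$ but genuinely fails (with a logarithmically divergent integral) for signatures $(2,1)$ and $(2,2)$.

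Granting the uniform estimate, the upper bound follows by truncation: choose a large compact set $X_{\le L}\subset X$, split $\hat f=\hat f\cdot\mathbf{1}_{X_{\le L}}+\hat f\cdot\mathbf{1}_{X_{>L}}$, apply the bounded-$F$ equidistribution to the first piece and the uniform estimate above to bound the second uniformly in $T$ by a quantity tending to $0$ as $L\to\infty$; then take a slightly enlarged region, and finally let $L\to\infty$. Combining the matching upper and lower bounds with the volume asymptotic~(\ref{volume}) gives $N_{Q,\Omega}(a,b,T)\sim\la_{Q,\Omega}(b-a)T^{n-2}$ as $T\to\infty$. The step I expect to be genuinely hard --- and which is the technical heart of~\cite{EMM1} --- is the uniform integrability estimate for $\alpha$ along the expanding sets $B_T$; the equidistribution input, while deep, is by now essentially a black box via Ratner's theorems.
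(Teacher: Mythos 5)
The paper does not itself prove Theorem~1.1; it is quoted from \cite{EMM1}, and the paper only recalls (as Theorem~\ref{emm1alpha}) the one ingredient it re-uses, namely the uniform bound
$\sup_{t>0}\int_K\alpha^s(a_tk\Delta)\,dk<\infty$ for all $0<s<2$, citing its proof to Section~5 of~\cite{EMM1}. Your high-level architecture --- Siegel transform, reduction to spherical/translated averages of $\hat f$, Ratner/Dani--Margulis equidistribution for the bounded part, then truncation --- matches the actual route. The place where your account diverges, and it is not a cosmetic difference, is the method you propose for the uniform integrability estimate, which you yourself correctly identify as the technical heart of~\cite{EMM1}.

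You propose to establish $\int_{B_T}\alpha(h\,\bbz^n)\,dh=O(\mathrm{vol}(B_T))$ via Dani--Margulis quantitative non-divergence, using that $h\mapsto d(h\Delta)$ is $(C,\beta)$-good and inducting on the rank of $\Delta$. This cannot close the argument. A $(C,\beta)$-good estimate yields $|\{k\in K:d(a_tk\Delta)<\varepsilon\}|\ll\varepsilon^{\beta}$ with $\beta$ controlled by the degree of the relevant trigonometric polynomials (roughly $\beta\approx1/\deg$), and hence $\int_K\alpha^s\,dk<\infty$ only for $s<\beta$, which is far below $1$, let alone close to~$2$. But the truncation step needs $\int_K\alpha^s(a_tk\Delta)\,dk$ to be bounded \emph{uniformly in $t$} for some $s>1$ (so that the tail $\int\alpha\cdot\mathbf{1}_{\alpha>L}$ can be forced small uniformly in $t$), and~\cite{EMM1} in fact proves it for all $s<2$. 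Moreover, non-divergence \emph{per se} does not see the signature hypothesis: the orbit is non-divergent in every signature, including $(2,1)$ and $(2,2)$, where the present paper emphasizes that the moment estimate genuinely fails (this is precisely why Sections~4--6 and the EWAS/Diophantine conditions are needed). The actual mechanism in~\cite{EMM1} is a ``system of integral inequalities'': one builds a weighted supremum over the $\alpha_i$'s, shows that averaging $a_tK$ against it yields a strict contraction plus bounded error, and iterates; it is in the weight-bookkeeping of this contraction that $p\geq3$ enters, via the structure of the $a_t$-weights on $\bigwedge^i\bbr^n$. So the gap is concrete: replace the $(C,\alpha)$-good/non-divergence argument for the integrability estimate by the systems-of-inequalities iteration of~\cite{EMM1}; the rest of your outline is sound.
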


Theorem~\ref{emm1} fails if $Q$ has signature $(2,2)$ or $(2,1).$ Indeed there are irrational forms for which along a sequence $T_j,$ $N_{Q,\Omega}(a,b,T_j)>T_j^{n-2}(\log T_j)^{1-\epsilon}.$ However these irrational forms are very well approximated by split rational forms. Let us recall the following definition from~\cite{EMM2}.

\begin{definition}\label{ewas}
A quadratic form $Q$ of signature $(2,2)$ is called {\it extremely well approximable by split forms} (EWAS) if for every $N>0$ there exists a split integral form $Q'$ and $2\leq r\in\bbr$
\begin{equation}\label{e:ewas}\left\|Q-\frac{1}{r}Q'\right\|\leq\frac{1}{r^N}\end{equation}
where $\|\h\|$ is a norm, fixed once and for all, on the space of quadratic forms in four variables.
\end{definition}

Recall from~\cite{EMM2} that if $Q$ is irrational of signature $(2,2)$ then it has at most 4 rational null subspaces. Let
\begin{equation}\label{tildecountingfunction} \tilde{N}_{Q,\Omega}(a, b, T)=\#\h\left\{x\in\bbz^n\h:\h \begin{array}{c}x\h\h\mbox{is not in a null subspace of}\h\h Q \\ x\in T\Omega\h\h\mbox{and}\h\h a<Q(x)<b\end{array}\right\}\end{equation}

A.~Eskin, G.~A.~Margulis and S.~Mozes in~\cite{EMM2} proved

\begin{thm}\label{emm2}{\rm{(\cite[Theorem 1.3]{EMM2})}}
Let $\Omega$ be as above. Let $Q$ be an irrational quadratic form of signature $(2,2)$ which is not {\rm{EWAS}}.  Then for any interval $(a,b)$
\begin{equation}\label{e:emm1}\tilde{N}_{Q,\Omega}(a,b,T)\sim\lambda_{Q,\Omega}\hh(b-a)\hh T^{2}\hspace{2mm}{\rm{as}}\h\h T\rightarrow\infty\end{equation}
where $\lambda_{Q,\Omega}$ is as in~(\ref{volume}),  and $\tilde{N}_{Q,\Omega}$ is as in~(\ref{tildecountingfunction}).
\end{thm}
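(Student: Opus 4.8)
The plan is to follow the dynamical argument behind Theorem~\ref{emm1}, adapted to the rank-two group $\mathrm{SO}(2,2)$, with essentially all of the extra work going into the behaviour at infinity. First I would translate to homogeneous dynamics: fix a standard form $Q_0$ of signature $(2,2)$, write $Q=Q_0\circ g_0^{-1}$ with $g_0\in\SL_4(\bbr)$, and set $G=\SL_4(\bbr)$, $\Gamma=\SL_4(\bbz)$, $X=G/\Gamma$, $H=\mathrm{SO}(Q_0)$, $\Lambda_0=g_0^{-1}\bbz^4$. Counting $x\in\bbz^4$ with $a<Q(x)<b$ and $x\in T\Omega$ amounts to counting points of $\Lambda_0$ in $R_T=\{v:\,a<Q_0(v)<b\}\cap g_0^{-1}(T\Omega)$, which is $H$-invariant apart from the norm cutoff. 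Foliating $R_T$ by pieces of $H$-orbits and absorbing the dilation by $T$ into the $\bbr$-split one-parameter subgroup $a_s\subset H$ that expands the relevant level vectors, the reduction of \cite{EMM1} (well-roundedness of $R_T$, together with the Eskin--McMullen orbit-counting method) shows that $\tilde N_{Q,\Omega}(a,b,T)$ is asymptotic to the integral over an expanding Haar ball $B_T\subset H$ of $\widetilde F(h\Lambda_0)$, where $\widetilde F$ is the Siegel transform of the characteristic function of a fixed region, modified by discarding the integer points lying in the (at most four, by the remark recalled above from \cite{EMM2}) rational null subspaces of $Q$. It then suffices to prove that this orbital integral is $\sim\lambda_{Q,\Omega}(b-a)T^2$, which breaks into equidistribution of the ball averages and control of the cusp tail of $\widetilde F$.

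For the equidistribution, I would use that $\mathrm{SO}(2,2)^\circ$ is isogenous to $\SL_2(\bbr)\times\SL_2(\bbr)$, so the expanding directions of $a_s$ in $H$ are one horocyclic direction in each $\SL_2$-factor and $B_T\cdot\Lambda_0$ is well approximated by a product of expanding horocycle plaques pushed by the two geodesic flows. By the wavefront argument of Eskin--McMullen, equidistribution of $B_T\cdot\Lambda_0$ toward $\mu_X$ follows from equidistribution of the pushed horocycle plaques; and since $Q$ is irrational the orbit $H\Lambda_0$ is dense in $X$ (otherwise $Q$ would be rational), so Ratner's measure classification combined with Dani--Margulis linearization forces the pushed plaques to equidistribute. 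Granting the cusp-tail estimate below, the lower bound $\tilde N_{Q,\Omega}(a,b,T)\gtrsim\lambda_{Q,\Omega}(b-a)T^2$ then follows by truncating $\widetilde F$ to a compact part of $X$ and applying Fatou's lemma.

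The upper bound --- equivalently, the cusp-tail estimate --- is the crux. The modified transform $\widetilde F$ is unbounded on $X$: its large values come from lattices $h\Lambda_0$ that possess a two-plane which is nearly $Q_0$-isotropic and nearly rational, i.e.\ from deep excursions of the orbit into the cusp. I would split $\widetilde F=\widetilde F^{(L)}+\widetilde F_L$ into the part supported on lattices of height $\le L$ and a cusp tail; by Siegel's summation formula and Schmidt's $L^p$-bound for Siegel transforms one gets $\|\widetilde F_L\|_{L^1(X)}\to0$ as $L\to\infty$ (uniformly after rescaling by $T^2$), so equidistribution disposes of $\widetilde F^{(L)}$ and the whole matter reduces to bounding $\int_{B_T}\widetilde F_L(h\Lambda_0)\,d\mu_H$ by $o(T^2)$ uniformly in $L$ --- that is, to showing that $H\Lambda_0$ spends a negligible proportion of each $B_T$ above height $L$. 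This follows from a Dani--Margulis-type $(C,\alpha)$-good nondivergence estimate along the $H$-orbit, \emph{unless} $Q$ is EWAS: a direct Diophantine computation relates the possible failure of such an estimate (the orbit spending a non-negligible fraction of time near a fixed cuspidal face of $X$) to the existence, for every $N$, of a split integral form $Q'$ and a real $r\ge2$ with $\|Q-\frac1r Q'\|\le r^{-N}$. Since $Q$ is not EWAS, the estimate holds and gives $\tilde N_{Q,\Omega}(a,b,T)\lesssim\lambda_{Q,\Omega}(b-a)T^2$.

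The main obstacle is exactly this last cusp analysis --- making the dichotomy ``either $\mathrm{SO}(Q)\bbz^4$ escapes slowly above every height, or $Q$ is EWAS'' simultaneously quantitative and uniform in $T$. Carrying it out requires a description of the cuspidal faces of $X$ seen by $H\Lambda_0$ in terms of rational isotropic planes (where the finiteness of such planes for irrational $Q$ is used), an effective bound on how long an $a_s$-trajectory can dwell above a given height, converting a long dwell into a good split rational approximation of $Q$, and --- the genuinely new labour compared with the $p\ge3$ case of Theorem~\ref{emm1} --- a summation over the continuum of two-planes that are only ``almost rational, almost isotropic'', showing that their combined contribution to the cusp tail is still $o(T^2)$. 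Everything else (well-roundedness of $R_T$, the passage from the count to the orbital integral, and the equidistribution input) is by now standard once this estimate is in place.
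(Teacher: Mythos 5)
The theorem you are proving is cited in the paper from [EMM2]; the paper itself does not give a proof, but the inhomogeneous generalization proved later in the paper (Theorem~\ref{(2,2)}, via Theorem~\ref{unboundedequi(2,2)}) follows the same blueprint as [EMM2], so I can compare your outline against that.

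Your high-level structure --- translate to the theta/Siegel transform on the space of lattices, separate a bounded part and a cusp tail, get equidistribution for the bounded part via Ratner plus linearization, and finish by controlling the cusp tail --- is the right one and matches [EMM1]/[EMM2] in spirit. Two remarks on the framework. First, [EMM1], [EMM2] and the present paper work with spherical averages $\int_K \hat f(a_t k \Lambda)\,dk$ rather than Haar-ball averages $\int_{B_T}\widetilde F(h\Lambda_0)\,d\mu_H$ and well-roundedness; the two are close by $KA^+K$-decomposition, but the specific linearization results used (Theorems~\ref{bequidistribution1}, \ref{bequidistribution2} here, from [EMM1, Thm.~4.4]) are stated for the spherical averages, and the translation is not entirely free of content, since one must keep track of the $K$-measure of ``bad'' directions at each time $t$. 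Your Eskin--McMullen formulation works, but it is a genuinely different packaging of the same input.

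The serious gap is in the cusp-tail estimate, which you correctly identify as the crux. Your proposed mechanism --- ``a Dani--Margulis-type $(C,\alpha)$-good nondivergence estimate along the $H$-orbit, unless $Q$ is EWAS'' --- misdescribes what actually happens. Classical nondivergence of the $\alpha_1$ and $\alpha_3$ tails does hold uniformly, \emph{regardless} of whether $Q$ is EWAS: this is Theorem~\ref{alpha13} (= [EMM1, Section~5]) and it is never what fails. What is specific to signature $(2,2)$ is the contribution of $\alpha_2$ coming from two-dimensional sublattices, and this contribution does not fit into the $(C,\alpha)$-good framework at all. The proof in [EMM2] handles it by splitting into \emph{non-quasinull} and \emph{quasinull} two-planes; the non-quasinull contribution is controlled unconditionally by a delicate estimate (Theorem~\ref{notquasinull}, which is [EMM2, Thm.~2.6], and is already a major piece of work, giving $O(\delta^{1.04})$ and not a generic $(C,\alpha)$-exponent); and the quasinull contribution is controlled by a counting theorem (Theorem~\ref{approximation}, from [EMM2, Prop.~10.11]) whose dichotomy is precisely EWAS versus not-EWAS. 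Your final paragraph does mention ``a summation over the continuum of two-planes that are only almost rational, almost isotropic,'' which is the quasinull analysis, but you present it as deferred work and your earlier claim that it reduces to a standard nondivergence estimate would lead you astray: there is no uniform $L^{2-\vare}$ bound for $\alpha_2$ in the $(2,2)$ case, and the argument must instead count quasinull subspaces of norm $\approx T$ and show that too many of them forces a close split rational approximation of $Q$. Without this decomposition and the two cited theorems, the upper bound does not go through.
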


This paper extends theorems~\ref{emm1} and~\ref{emm2} to the setting of inhomogeneous quadratic forms. 
Let us first state the following which provides us with asymptotically exact lower bound. This indeed is the analogue of the similar result obtained by S.~G.~Dani and G.~A.~Margulis~\cite{DM3}.

\begin{thm}\label{dmlowerbound}
Let $Q_\xi$ be an indefinite irrational quadratic form in $n\geq3$ variables. Then for any interval $(a,b)$ we have
\begin{equation}\label{e:dmlowerbound}{\lim{\rm{inf}}}\frac{1}{T^{n-2}}N_{Q,\xi,\Omega}(a,b,T)\geq\lambda_{Q,\Omega}(b-a)\hspace{3mm}\mbox{as}\h\h{T\rightarrow\infty}\end{equation}
where $\lambda_{Q,\Omega}$ is as in~(\ref{volume})
\end{thm}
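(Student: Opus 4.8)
The plan is to follow the Dani--Margulis strategy for the homogeneous case, replacing the action of $\SL_n(\bbr)$ on $\SL_n(\bbr)/\SL_n(\bbz)$ by the action of the affine group $\hrn$ (or rather the relevant subgroup preserving $Q$) on the homogeneous space $G/\Gamma$ where $G = \SO(Q)\ltimes\bbr^n$ and $\Gamma = \SO(Q)(\bbz)\ltimes\bbz^n$, or more simply by working in $\SL_{n+1}(\bbr)/\SL_{n+1}(\bbz)$ via the standard embedding of an inhomogeneous form as a homogeneous form in one more variable restricted to an affine hyperplane. The key point is that a lower bound, unlike an asymptotic, does not require any quantitative non-divergence or equidistribution with rates; it only requires a single non-trivial ergodic-theoretic input, namely that a suitable orbit closure is large. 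So the first step is to set up the correspondence: counting $x\in\bbz^n\cap T\Omega$ with $a<Q_\xi(x)<b$ is, after the substitution $y = x+\xi$, the same as counting points of the shifted lattice $\bbz^n+\xi$ lying in a region cut out by $Q$ and the dilated star body; this is naturally expressed as $\int_{K} F_T(\Phi(g\Lambda_\xi))\,dg$-type averages, where $\Lambda_\xi = \bbz^n+\xi$ is a point of the space of affine lattices, $F_T$ is (a smoothed version of) the counting function for the region $\{a<Q<b\}\cap T\Omega$, and $K$ runs over a compact piece of $\SO(Q)$ (or a one-parameter unipotent/diagonal family).

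The second step is the heart of the matter. One takes the function $\psi$ on the space of affine unimodular lattices $\bbr^n$ given by $\psi(\Lambda) = \#\{v\in\Lambda : v\in\Omega,\ a<Q(v)<b\}$, smooths it, and studies the averages of $\psi$ along the $H$-orbit of $\Lambda_\xi$, where $H=\SO(Q)^\circ$ acts on the space of affine lattices. Since $Q_\xi$ is irrational, $\Lambda_\xi$ does not lie in a closed $H$-orbit that is "too small": by Ratner's theorem applied to the unipotent subgroups of $H$ (combined with the classification of the relevant intermediate subgroups of $\SL_{n+1}$ as in Dani--Margulis~\cite{DM3}), the closure $\overline{H\Lambda_\xi}$ supports an $H$-invariant probability measure $\mu$, and crucially $\mu$ projects onto the full homogeneous measure in the $Q$-value direction — equivalently, the pushforward under $\Lambda\mapsto$ (values of $Q$ attained in $\Omega$) is not supported on a proper subvariety. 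This is exactly where irrationality is used and it is the step I expect to be the main obstacle: one must rule out that $\overline{H\Lambda_\xi}$ is contained in a proper closed invariant set on which the relevant integral of $\psi$ is too small. In the homogeneous setting this is handled by the fact that the only proper closed $H$-invariant subsets correspond to rational forms; in the inhomogeneous setting one must additionally account for the translation part, which is where the ``either $Q$ irrational or $\xi$ irrational'' dichotomy in the hypothesis enters.

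The third step is to extract the quantitative lower bound from the qualitative statement $\overline{H\Lambda_\xi}\supseteq$ (something large). One writes, for a continuous compactly supported $f\geq 0$ approximating the indicator of $\{a<Q<b\}\cap\Omega$ from below,
\begin{equation}\label{e:lbavg}
\frac{1}{T^{n-2}}N_{Q,\xi,\Omega}(a,b,T)\ \geq\ \frac{1}{T^{n-2}}\widehat{f}_T(\Lambda_\xi)
\end{equation}
where $\widehat{f}_T(\Lambda)=\sum_{v\in\Lambda}f(v/T)$ is the Siegel-type transform; one then averages the left side over a ball in $H$ that shrinks slowly as $T\to\infty$ (so that $T^{-1}\cdot(\text{ball})\cdot\Lambda_\xi$ stays close to $\Lambda_\xi$ but the average sees the full orbit closure), applies the ergodic theorem / Ratner equidistribution for that ball of translates converging to $\mu$, and uses that $\int \widehat f\,d\mu = \lambda_{Q,\Omega}\int f$ by unfolding against the Haar measure. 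Fatou's lemma (lower semicontinuity) then gives $\liminf_{T\to\infty} T^{-(n-2)}N_{Q,\xi,\Omega}(a,b,T)\geq \lambda_{Q,\Omega}\int f$, and letting $f\nearrow \mathbf 1_{\{a<Q<b\}\cap\Omega}$ yields $\lambda_{Q,\Omega}(b-a)$ as in~\eqref{volume}. The routine parts here are the smoothing, the interchange of the $H$-average with the $T\to\infty$ limit, and the standard unfolding computation of $\int\widehat f\,d\mu$; none of these should present difficulty once the orbit-closure input from step two is in hand.
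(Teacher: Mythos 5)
Your proposal is essentially the same approach the paper takes: pass to the space of affine (inhomogeneous) lattices $\SL_n(\bbr)\ltimes\bbr^n/\SL_n(\bbz)\ltimes\bbz^n$, write the counting function via the Siegel transform $\hat f$, approximate from below by compactly supported continuous functions, invoke Dani--Margulis/Ratner-type equidistribution of spherical averages $\int_K \hat f(a_t k\Lambda_\xi)\,dk$ (the paper's Theorems~\ref{bequidistribution1} and~\ref{bequidistribution2}), and evaluate the limit via the Siegel-type integral formula (Lemma~\ref{siegelformula}). The paper simply cites these ingredients together with the arguments of~\cite{DM3} and~\cite[3.4--3.5]{EMM1}, which is what you have unpacked; the only slight inaccuracies in your write-up are cosmetic (the averaging used is over the fixed compact $K$ with $t=\log T\to\infty$, not over a shrinking ball in $H$, and the Siegel formula gives $\int\hat f\,d\mu=\int f\,dx$ directly, with $\lambda_{Q,\Omega}$ entering when one relates $\int f\,dx$ to $(b-a)T^{n-2}$).
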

\begin{proof}
This theorem is obtained from theorems~\ref{bequidistribution1} and~\ref{bequidistribution2} below with arguments as in~\cite{DM3} or~\cite[3.4,3.5]{EMM1}.
\end{proof}

We have the following

\begin{thm}\label{signaturenot(2,2)}
Let $Q$ be an indefinite quadratic form of signature $(p,q)$ where $p\geq3$ and $q\geq1.$ Let $\xi\in\bbr^n$ where $n=p+q.$ Suppose that $Q_\xi$ is an irrational form then
\begin{equation}\label{e:signaturenot(2,2)}N_{Q,\xi,\Omega}(a,b,T)\sim\lambda_{Q,\Omega}(b-a)T^{n-2}\hspace{2mm}{\rm{as}}\h\h T\rightarrow\infty\end{equation}
where $\lambda_{Q,\Omega}$ is given in~(\ref{volume}).
 \end{thm}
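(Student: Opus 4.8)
The plan is to follow the dynamical strategy pioneered by Dani--Margulis and Eskin--Margulis--Mozes, translating the counting problem into an equidistribution statement for translates of a suitable orbit in a homogeneous space and then upgrading it to the inhomogeneous setting. Concretely, write $n=p+q$, let $G=\SL_n(\bbr)$ and $\Gamma=\SL_n(\bbz)$, and realize the homogeneous part $Q$ as $Q=Q_0\circ g_0$ for a fixed $g_0\in G$, where $Q_0$ is a standard form of signature $(p,q)$. The orthogonal group $H=\SO(Q_0)$ acts on $G/\Gamma$, and the counting function $N_{Q,\xi,\Omega}(a,b,T)$ is captured, via the usual unfolding, by integrating a bump-type function over the expanding translates $a_t H$-orbits (here $a_t$ is a one-parameter diagonal flow normalizing $H$ and expanding in the ``radial'' direction), but now on the space of \emph{affine} lattices $\SL_n(\bbr)\ltimes\bbr^n / \SL_n(\bbz)\ltimes\bbz^n$, since the shift by $\xi$ forces us to track the coset of $\bbz^n$ translated by $g_0\xi$. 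Thus the first step is the combinatorial/geometric reduction of Theorem~\ref{signaturenot(2,2)} to an equidistribution statement: that the translated orbits $a_t \cdot (H x_0, v_0)$ equidistribute in the affine homogeneous space with respect to the Haar measure, which is exactly the content of the Theorems~\ref{bequidistribution1} and~\ref{bequidistribution2} referenced in the proof of Theorem~\ref{dmlowerbound}.

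The second step is to establish the required upper bound, matching the lower bound already furnished by Theorem~\ref{dmlowerbound}. The lower bound comes ``for free'' from equidistribution applied to compactly supported continuous functions; the upper bound is the substantive half and requires ruling out escape of mass to the cusp, i.e. a quantitative non-divergence estimate for the relevant translated orbits in the affine space. Here I would adapt the Eskin--Margulis--Mozes $\alpha$-function (or the Benoist--Quint / EMM ``height function'') machinery: one needs a function on the affine homogeneous space that is proper, has controlled growth under the $H$-action, and whose integral over the expanding spheres in $H$ stays bounded uniformly in $t$. Since $p\geq 3$, the relevant unipotent subgroups of $H$ are large enough (this is precisely where signature $(2,2)$ and $(2,1)$ must be excluded, as the introduction notes), so the same non-divergence estimates that work in the homogeneous case go through; the main adjustment is to check that the presence of the translation part $\bbr^n$ does not create new cusps that are not controlled — one stratifies according to the rational subspaces on which the linear part degenerates, exactly as in~\cite{EMM1}, and handles the translation coordinate by noting it lives in a compact fiber direction after quotienting.

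The third step is to combine the two bounds. Once equidistribution is known and the tails are controlled, a standard approximation argument (sandwiching the indicator of $\{x\in T\Omega : a<Q_\xi(x)<b\}$ between continuous functions, letting $t\to\infty$ which corresponds to $T\to\infty$, and then letting the approximation parameter go to zero) yields $N_{Q,\xi,\Omega}(a,b,T)\sim\lambda_{Q,\Omega}(b-a)T^{n-2}$, with the constant $\lambda_{Q,\Omega}$ emerging as the Haar-measure integral, which by the volume computation~(\ref{volume}) agrees with the stated geometric constant. Irrationality of $Q_\xi$ enters at exactly one point: it guarantees that the orbit $(Hx_0,v_0)$ is not contained in a proper closed invariant subset, so that Ratner-type classification of limit measures (via the linearization/Dani--Margulis technique) forces the limit to be Haar — if $Q$ itself is irrational this is the usual statement, while if $Q$ is rational but $\xi$ irrational the orbit of the affine lattice is dense even though the linear orbit is closed.

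The main obstacle I expect is the non-divergence estimate in the affine setting — verifying that the EMM $\alpha$-function (or a suitable replacement) integrated over expanding $H$-translates remains uniformly bounded when one works on $\SL_n(\bbr)\ltimes\bbr^n/\SL_n(\bbz)\ltimes\bbz^n$ rather than $\SL_n(\bbr)/\SL_n(\bbz)$. One must carefully set up the correct height function on the affine space, control how it transforms under $a_t$, and ensure the translation coordinate contributes only bounded error; getting the polynomial-in-$t$ (in fact uniform) bounds here, rather than the clean homogeneous statement, is where the bulk of the technical work lies, and it is presumably the reason the auxiliary equidistribution Theorems~\ref{bequidistribution1} and~\ref{bequidistribution2} are stated and proved separately before this theorem.
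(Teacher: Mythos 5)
Your strategy matches the paper's: Theorem~\ref{signaturenot(2,2)} is obtained as a special case of Theorem~\ref{uniformnot(2,2)}, which follows by integrating the theta transform~(\ref{thetatrans}) against the spherical averages $a_tK$ and invoking the equidistribution statement Theorem~\ref{unbddequinot(2,2)} on $G/\Gamma$ with $G=\SL_n(\bbr)\ltimes\bbr^n$, exactly the unfolding you describe. Two points worth tightening, though.

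First, the non-divergence step that you flag as the ``main obstacle'' is in fact essentially free, not the bulk of the work. The height function $\alpha$ on the space of affine lattices is defined by $\alpha(\Delta+\xi):=\alpha(\Delta)$, i.e.\ it depends only on the underlying linear lattice and ignores the translation. Consequently the Eskin--Margulis--Mozes integrability estimate $\sup_{t>0}\int_K\alpha^s(a_tk\Delta)\,dk<\infty$ for $0<s<2$ (Theorem~\ref{emm1alpha}, which is where $p\geq 3$ is used) applies verbatim, and the Schmidt-type domination $\hat f(\Delta+\xi)\leq c\,\alpha(\Delta)$ of~(\ref{lipschitzprinciple}) needs only a trivial modification of the homogeneous argument. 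There is no new cusp geometry to control coming from the translation coordinate. The genuine new content is instead in the bounded-equidistribution theorems themselves (Theorems~\ref{bequidistribution1} and~\ref{bequidistribution2} in the appendix), where one runs the Dani--Margulis linearization inside $\SL_n(\bbr)\ltimes\bbr^n$ and exploits that $H^0\ltimes\bbr^n$ is a maximal connected subgroup.

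Second, your description of how irrationality enters is slightly off. When $Q$ is rational but $\xi$ irrational, the orbit of the relevant affine lattice under $H\ltimes\bbr^n$ \emph{is} closed (since the linear orbit in $\SL_n(\bbr)/\SL_n(\bbz)$ is closed), so the translates $a_tKx$ do \emph{not} equidistribute with respect to Haar measure on all of $G/\Gamma$; they equidistribute with respect to the $H\ltimes\bbr^n$-invariant probability measure on that closed orbit. The reason the count nonetheless comes out with the same constant $\lambda_{Q,\Omega}$ is the Siegel-type integral formula (Lemma~\ref{siegelformula}): for \emph{any} $\bbr^n$-invariant probability measure $\mu$ on $G/\Gamma$ one has $\int_{G/\Gamma}\hat f\,d\mu=\int_{\bbr^n}f\,dx$, so it is immaterial whether $\mu$ is the Haar measure or the closed-orbit measure. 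Irrationality of $Q_\xi$ is what excludes the truly degenerate case, namely the rational forms that make up the exceptional finite set $\mathcal{P}$ in Theorem~\ref{uniformnot(2,2)}.
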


As in~\cite{EMM1} we also have the following uniform version of theorem~\ref{signaturenot(2,2)}. Let $\mathcal{I}(p,q)$ denote the space of inhomogeneous quadratic forms whose homogeneous parts are quadratic forms of signature $(p,q)$ and discriminant $\pm1.$

\begin{thm}\label{uniformnot(2,2)}
Let $\mathcal{D}$ be a compact subset of $\mathcal{I}(p,q)$ with $p\geq3$ and $q\geq1$ and let $n=p+q.$ Then for every interval $(a,b)$ and every $\theta>0$ there exists a finite subset $\mathcal{P}$ of $\mathcal{D}$ such that each $Q_{\xi}\in\mathcal{P}$ is a rational form and for every compact subset $F\subset\mathcal{D}\setminus\mathcal{P}$ there exists $T_0$ such that for all $Q_\xi\in F$ and $T\geq T_0$
\begin{equation}\label{e:uniformnot(2,2)}(1-\theta)\lambda_{Q,\Omega}(b-a)T^{n-2}\leq{N}_{Q,\xi,\Omega}(a,b,T)\leq(1+\theta)\lambda_{Q,\Omega}(b-a)T^{n-2}\end{equation}
where $\lambda_{Q,\Omega}$ is as in~(\ref{volume}).
\end{thm}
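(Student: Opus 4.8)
The plan is to derive Theorem~\ref{uniformnot(2,2)} from Theorem~\ref{signaturenot(2,2)} together with the underlying dynamical equidistribution statements (theorems~\ref{bequidistribution1} and~\ref{bequidistribution2} referenced above), exactly along the lines of the uniform version in~\cite{EMM1}. The first step is to translate the counting problem into a statement about the action of $H=\mathrm{SO}(Q)\ltimes\bbr^n$ (or its relevant subgroup) on the homogeneous space $G/\Gamma$ with $G=\mathrm{SL}_{n+1}(\bbr)\ltimes\bbr^{n+1}$, $\Gamma$ the integer points, so that $N_{Q,\xi,\Omega}(a,b,T)$ becomes (up to controlled error) an integral of a fixed test function against the push-forward of Haar measure on an expanding orbit piece, and $\lambda_{Q,\Omega}(b-a)T^{n-2}$ is the corresponding integral against the invariant measure. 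For a \emph{fixed} irrational $Q_\xi$ this is what Theorem~\ref{signaturenot(2,2)} provides; the task here is uniformity over the compact family $\mathcal{D}$.

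Next I would argue by contradiction in the standard way. Suppose the conclusion fails for some $(a,b)$ and some $\theta>0$: then for every finite rational subset $\mathcal{P}\subset\mathcal{D}$ and every $T_0$ there is a compact $F\subset\mathcal{D}\setminus\mathcal{P}$, a form $Q_\xi\in F$ and a $T\geq T_0$ violating~(\ref{e:uniformnot(2,2)}). Exhausting $\mathcal{D}$ by an increasing sequence of such finite rational sets and letting $T_0\to\infty$, one extracts a sequence $Q_{\xi^{(j)}}$ in $\mathcal{D}$ and $T_j\to\infty$ with $N_{Q^{(j)},\xi^{(j)},\Omega}(a,b,T_j)$ off by a factor $\theta$ from the predicted main term, and with each $Q_{\xi^{(j)}}$ staying out of a prescribed finite rational set. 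By compactness of $\mathcal{D}$ we may pass to a subsequence with $Q_{\xi^{(j)}}\to Q_{\xi^{(\infty)}}\in\mathcal{D}$. The crux is then a dichotomy: either the limit $Q_{\xi^{(\infty)}}$ is irrational, or it is rational. In the irrational case one applies Theorem~\ref{signaturenot(2,2)} to $Q_{\xi^{(\infty)}}$ together with a continuity/semicontinuity argument (the test functions and the set $T\Omega$ vary continuously, the relevant orbits depend continuously on the form) to contradict the factor-$\theta$ discrepancy for all large $j$; this requires the equidistribution to be locally uniform in the form, which is exactly the content one reads off from the non-divergence and linearization estimates behind theorems~\ref{bequidistribution1},~\ref{bequidistribution2}. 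In the rational case, $Q_{\xi^{(\infty)}}$ would have to lie in the finite exceptional set we are constructing, so it must coincide with one of the $Q_\xi\in\mathcal{P}$ already removed at that stage — but then $Q_{\xi^{(j)}}$, being bounded away from every form in $\mathcal{P}$, cannot converge to it, a contradiction. The only remaining possibility is that there are \emph{infinitely many} rational forms in $\mathcal{D}$ serving as limits; here one invokes the standard finiteness fact (as in~\cite{EMM1,EMM2}) that within the compact set $\mathcal{D}$ only finitely many rational forms can be ``close'' to causing failure — concretely, a rational $Q_\xi$ with bounded denominators has $N_{Q,\xi,\Omega}(a,b,T)$ either identically of the wrong order (isolated behaviour) and there are only finitely many such with denominators below any given bound, while forms with large denominators are handled uniformly by the quantitative non-divergence estimate. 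Assembling these finitely many genuinely exceptional rational forms into $\mathcal{P}$ yields the theorem.

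The main obstacle, and the step deserving the most care, is making the equidistribution \emph{locally uniform in $Q_\xi$}: Theorem~\ref{signaturenot(2,2)} as stated is pointwise in the form, so one must revisit its proof to see that the rate of convergence is uniform on compact subsets of the irrational locus, and simultaneously extract the finite list of rational forms where the argument genuinely breaks. This is precisely where the quantitative non-divergence (Dani--Margulis type) estimates and the linearization technique enter: they give the needed uniformity away from a ``tube'' around rational forms, and a compactness argument then shows that only finitely many rational forms in $\mathcal{D}$ fall outside the region of uniform control for the given $(a,b)$ and $\theta$. Once this uniform-in-the-parameter version of the counting asymptotic is in hand, the contradiction argument above closes routinely.
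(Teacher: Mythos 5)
Your high-level reading of the situation is right: uniformity over the family $\mathcal{D}$ is the whole point, and it has to come from the Dani--Margulis / Ratner machinery rather than from Theorem~\ref{signaturenot(2,2)} applied formwise. But the proof as written has a genuine gap at its crux, and it is also structured differently from the paper's argument.

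First, a concrete error: the homogeneous space is $G/\Gamma$ with $G=\SL_n(\bbr)\ltimes\bbr^n$ and $\Gamma=\SL_n(\bbz)\ltimes\bbz^n$ (the space of unimodular inhomogeneous lattices in $\bbr^n$), as fixed in Section~\ref{sec;inhomo}; your $\SL_{n+1}(\bbr)\ltimes\bbr^{n+1}$ is not the right object (nor is it a known alternate model; the $\SL_{n+1}$ model of affine lattices does not involve a semidirect product with $\bbr^{n+1}$).

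Second, and more seriously, the compactness/contradiction scheme does not close. After extracting $Q_{\xi^{(j)}}\to Q_{\xi^{(\infty)}}$ you need, in the irrational case, that the counting asymptotic holds \emph{uniformly} on a neighbourhood of $Q_{\xi^{(\infty)}}$ inside $\mathcal{D}$; you flag this as the step that ``requires the equidistribution to be locally uniform in the form'' and then assert it ``is exactly the content one reads off from'' theorems~\ref{bequidistribution1},~\ref{bequidistribution2}. That is precisely what has to be proved, and it is not a reformulation of those theorems: they concern a bounded continuous test function, while here $\hat f$ is unbounded and one must control the cusp contribution uniformly in the form (this is the role of Theorem~\ref{emm1alpha} and the cut-off $g_r$). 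The paper packages all of this into Theorem~\ref{unbddequinot(2,2)}, which is the actual engine of the proof: it produces the finite list of closed $H\ltimes\bbr^n$-orbits $Hx_1,\dots,Hx_\ell$, uniformly in a compact subset of $G/\Gamma$, outside of which $\int_K\hat f(a_tkx)\nu(k)\,dk$ converges to the right limit. The finite set $\mathcal{P}$ of rational forms in Theorem~\ref{uniformnot(2,2)} is then read off from those orbits, and the counting statement follows by integrating the theta transform as in \cite[\S3.4--3.5]{EMM1}. Your proposal never reaches this; the paragraph about rational limit forms (``forms with large denominators are handled uniformly by the quantitative non-divergence estimate'') is also not correct as stated — non-divergence controls escape of mass to the cusp, not convergence to the correct limiting measure on a closed intermediate orbit.

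In short: the paper proves a uniform spherical-average equidistribution theorem for the unbounded function $\hat f$ directly (Theorem~\ref{unbddequinot(2,2)}), combining the bounded-function equidistribution (theorems~\ref{bequidistribution1},~\ref{bequidistribution2}) with the $\alpha$-function integrability estimate of Theorem~\ref{emm1alpha}, and then deduces Theorem~\ref{uniformnot(2,2)} by the routine integration step of \cite{EMM1}. Your contradiction argument would need exactly that uniform statement as an input, so it does not shortcut the work and, as written, begs the question at the critical step.
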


The proofs of the above theorems are straightforward inhomogeneous versions of the arguments and ideas developed in~\cite{DM3} and~\cite{EMM1}.

As we mentioned before theorem~\ref{signaturenot(2,2)} fails in signature $(2,2)$ and $(2,1)$. In this paper we prove an inhomogeneous version of theorem~\ref{emm2}. Indeed as in loc. cit. one needs to assume certain ``{\it Diophantine condition}" on the quadratic form. Using similar ideas we also give a partial result in the $(2,1)$ case, see theorem~\ref{(2,1)} below. We start with the following definitions.

\begin{definition}\label{d:farfromrational}
A vector $\xi=(\xi_1,\cdots,\xi_n)\in\bbr^n$ is called {\it $\kappa$-Diophantine}, if there exist $\kappa>0$ and $C=C(\xi)>0$ such that for all $0<\delta<1$ and all rational vectors $(\frac{p_1}{q_1},\cdots,\frac{p_n}{q_n})\in\bbq^n$ with $\max_i|q_i|<1/\delta$ we have
\begin{equation}\label{farfromrational}\max_i |\xi_i-\frac{p_i}{q_i}|>C\hh\delta^{\kappa}\end{equation}
\end{definition}
We say $\xi$ is Diophantine if it is $\kappa$-Diophantine for some $\kappa.$ 
The following is our Diophantine condition on $Q_\xi.$
\begin{definition}\label{diophantineforms}
The irrational inhomogeneous quadratic form $Q_\xi$ of signature $(2,2)$ is called {\it Diophantine} if either $Q$ is not EWAS or $\xi$ is Diophantine.
\end{definition}

Let $Q_\xi$ be a quadratic form of signature $(2,2).$ If $L$ is a rational $2$-dimensional null subspace of $Q$ and $\xi\in L+v_\xi$ for some $v_\xi\in\bbz^4$ then the affine subspace $L-v_\xi$ will be called an exceptional subspace of $Q_\xi.$ We will see that if either $Q$ or $\xi$ is irrational then there are at most four subspaces $L$ for which the above can hold. Let
\begin{equation}\label{inhcountingfunction} \tilde{N}_{Q,\xi,\Omega}(a, b, T)=\#\h\left\{x\in\bbz^n\h:\h \begin{array}{c}x\h\mbox{is not in an exceptional subspace of}\h Q_\xi \\ x\in T\Omega\h\h\mbox{and}\h\h a<Q_\xi(x)<b\end{array}\right\}\end{equation}
The following is analogue of theorem~\ref{emm2} in the inhomogeneous setting.

\begin{thm}\label{(2,2)}
Let $Q_\xi$ be an inhomogeneous quadratic form of signature $(2,2).$ Assume that $Q_{\xi}$ is Diophantine.  Then for any interval $(a,b)$
\begin{equation}\label{e:(2,2)}\tilde{N}_{Q,\xi,\Omega}(a,b,T)\sim\lambda_{Q,\Omega}(b-a)T^2\hspace{2mm}{\rm{as}}\h\h T\rightarrow\infty\end{equation}
where $\lambda_{Q,\Omega}$ is as in~(\ref{volume}),  and $\tilde{N}_{Q,\xi,\Omega}$ is defined in~(\ref{inhcountingfunction}).
\end{thm}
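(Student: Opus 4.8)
The plan is to follow the strategy of \cite{EMM2} with the modifications needed to accommodate the translation vector $\xi$. The counting problem is translated into a statement about the distribution of a translate of an orbit in the space of lattices (or rather affine lattices), so the first step is to set up the correct homogeneous space. Here the homogeneous part $Q$ determines an action of $H=\mathrm{SO}(Q)\cong\mathrm{SO}(2,2)$, and the inhomogeneous shift means we should work on $G/\Gamma$ where $G=\mathrm{SL}_4(\bbr)\ltimes\bbr^4$ and $\Gamma=\mathrm{SL}_4(\bbz)\ltimes\bbz^4$, i.e.\ the space of affine unimodular lattices in $\bbr^4$; the point corresponding to $(\bbz^4,\xi)$ has an $H$-orbit whose equidistribution under the relevant one-parameter (or higher-dimensional) expansion governs $\tilde N_{Q,\xi,\Omega}$. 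As in \cite{EMM1,EMM2} one introduces the function on $H/\Gamma_H$ (really on the affine lattice space) obtained by summing a bump function over lattice points in the region $T\Omega$ with $a<Q_\xi<b$, and reduces the asymptotics to integrating this function against a limiting measure, after excising the contribution of the exceptional (affine) null subspaces.

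The key steps, in order, are: (i) verify the claim made in the excerpt that when $Q$ or $\xi$ is irrational there are at most four subspaces $L$ giving rise to exceptional affine subspaces $L-v_\xi$ — this is a direct consequence of the corresponding fact for homogeneous $(2,2)$ forms recalled from \cite{EMM2}, together with the observation that $\xi\in L+v_\xi$ for $v_\xi\in\bbz^4$ pins down $v_\xi$ modulo $L\cap\bbz^4$, so each rational null $L$ contributes at most one exceptional subspace modulo the lattice; (ii) establish the relevant non-divergence and non-escape-of-mass statement for the translated trajectory $a_t u H$-type orbits in the affine lattice space, controlling the quantity $\alpha(\Lambda)=\sup$ over null subspaces of inverse covolumes, uniformly in $t$; (iii) prove the needed equidistribution result for the expanding translates — this is where the Diophantine hypothesis enters. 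The point is that $Q_\xi$ Diophantine splits into two cases: if $Q$ is not EWAS, then the homogeneous equidistribution of \cite{EMM2} already controls the direction transverse to $\xi$ and the $\xi$-translate is handled by a standard thickening/averaging argument since the extra torus direction equidistributes on its own; if $Q$ is EWAS but $\xi$ is $\kappa$-Diophantine, then one must show that the Diophantine property of $\xi$ prevents the translated orbit from spending too much time near the problematic split rational forms, quantitatively cancelling the logarithmic divergence that makes Theorem~\ref{signaturenot(2,2)} fail in signature $(2,2)$.

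After equidistribution is in hand, the final step is the standard unfolding: the upper bound follows from the non-divergence estimate of step (ii) combined with the equidistribution of step (iii) applied to a suitable family of bump functions approximating the indicator of $\{x\in T\Omega : a<Q_\xi(x)<b\}$, letting the bump functions shrink; the lower bound is already provided by Theorem~\ref{dmlowerbound} (whose proof in turn rests on the equidistribution theorems~\ref{bequidistribution1} and~\ref{bequidistribution2} cited there). Matching the two gives the asymptotic $\tilde N_{Q,\xi,\Omega}(a,b,T)\sim\lambda_{Q,\Omega}(b-a)T^2$, with the exceptional subspaces removed precisely because their contribution is the only obstruction to the upper bound, exactly as in \cite{EMM2}.

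The main obstacle I expect is step (iii) in the case where $Q$ is EWAS: one must quantify how the Diophantine condition on $\xi$ interacts with the sequence of split rational approximants $\frac1r Q'$ of $Q$. Concretely, near such an approximant the homogeneous count has an excess of size roughly $T^2\log T$ coming from lattice points close to the rational null subspaces of $Q'$; one has to show that shifting by $\xi$ moves these points off the level set $a<Q_\xi<b$ — equivalently, that $Q(x+\xi)$ stays bounded away from the relevant values for the offending $x$ — and the $\delta^\kappa$ lower bound in Definition~\ref{d:farfromrational}, calibrated against the denominator $r$ of the approximant, is exactly what is needed to beat the $\log$. Making this trade-off precise and uniform, and fitting it into the $\alpha$-function non-divergence machinery on the affine lattice space, is the technical heart of the argument; everything else is a routine inhomogeneous adaptation of \cite{EMM1,EMM2,DM3}.
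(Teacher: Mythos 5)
Your overall strategy matches the paper's: pass to the affine lattice space $G/\Gamma=(\SL_4(\bbr)\ltimes\bbr^4)/(\SL_4(\bbz)\ltimes\bbz^4)$, get the lower bound from Theorem~\ref{dmlowerbound}, get the upper bound from an equidistribution statement for the modified theta transform $\tilde f$ along $\int_K\tilde f(a_tk:\Lqxi)\nu(k)\,dk$ (this is Theorem~\ref{unboundedequi(2,2)}), handle the bounded part by the Ratner-type theorems~\ref{bequidistribution1}--\ref{bequidistribution2}, and control the unbounded part via the $\alpha$-function machinery with the Diophantine hypothesis giving the crucial power saving. However, two pieces of your plan contain real gaps.

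\emph{The exceptional-subspace count.} You argue that there are at most four exceptional subspaces by invoking the EMM2 fact that an irrational $(2,2)$ form has at most four rational null subspaces, plus the observation that each null $L$ determines $v_\xi$ modulo $L\cap\bbz^4$. This handles irrational $Q$, but the case that matters most here is $Q$ rational (split rational $Q$ is EWAS, and this is the hypothesis under which the paper says the new work lies). A rational split form has infinitely many rational null subspaces, so the EMM2 fact gives nothing; what you need is that among these infinitely many $L$, at most a bounded number satisfy $\xi\in L+v_\xi$ with $v_\xi\in\bbz^4$. The paper's Lemma~\ref{exceptional} supplies this by a transversality argument: two null subspaces $L_1,L_2$ of the same type are transversal, so the simultaneous integrality of all $\langle\xi,w^i_j\rangle$ for integral bases of both forces $\xi\in\bbq^4$, a contradiction when $\xi$ is irrational. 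Your outline has no substitute for this.

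\emph{The mechanism for the EWAS case.} Your proposal — "show that $Q(x+\xi)$ stays bounded away from the relevant values for the offending $x$" — is not the right reformulation, and the argument would not close as stated. The paper's argument is geometric, on the affine lattice space. For each quasinull subspace $L$ one establishes a dichotomy (Proposition~\ref{nondivergence}), proved via the $(C,\alpha)$-good function technology of Kleinbock--Margulis: either at most one affine translate $(L+\lambda_0+\zeta)$ stays near a fixed bounded region under the $a_tk$-flow, or the set of $k$ for which any translate enters the ball has measure $O(\delta^{\eta_1})$. In the first alternative, the persistence forces $\|(\lambda_0+\zeta)_{L^\perp}\|$ to be very small (Lemma~\ref{smalltrans}), hence forces the fractional parts $\{\langle w_i,\xi\rangle\}$ to be small for a suitably normalized basis $\{w_1,w_2\}$ of $L$ coming from the rational approximant $Q'$ (Lemma~\ref{onetransestimate}). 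The Diophantine condition on $\xi$ — transported through $\mathfrak p$ by Lemma~\ref{lineardiophantine} and Remark~\ref{r;lineardiophantine}, which is why the controlled size of the coefficients of $Q'$ from Theorem~\ref{approximation} matters — then bounds the number of $L$ in the dyadic range $T/2\leq\|v^L\|\leq T$ for which this can happen by $O(T^{1-\tau_1})$ (Proposition~\ref{fewbadsubspaces}). It is this counting bound, summed dyadically as in Corollary~\ref{powersaving} and section~\ref{sec;main}, that beats the log; there is no pointwise statement that $Q(x+\xi)$ avoids the interval. Without the dichotomy and without seeing that the Diophantine hypothesis bites on $\{\langle w_i,\xi\rangle\}$ rather than on $Q(x+\xi)$, the plan leaves the technical heart — which you correctly identified as the main obstacle — genuinely open.
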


We now turn to the $(2,1)$ case. Our result in this case is more restrictive. As before we define the notion of exceptional subspaces for forms of signature $(2,1).$ These are affine subspaces $L-v_\xi$ such that $L$ is a rational $1$-dimensional null subspace of $Q$ and $\xi\in L+v_\xi$ for some $v_\xi\in\bbz^3.$ The counting function, $\tilde{N}_{Q,\xi,\Omega},$ is defined correspondingly. We have the following

\begin{thm}~\label{(2,1)}
Let $Q_\xi$ be a form of signature $(2,1).$ If
\begin{itemize}
\item[(i)] the homogeneous part $Q$ is a split rational form and
\item[(ii)] the vector $\xi$ is Diophantine.
\end{itemize}
then for any interval $(a,b)$
\begin{equation}\label{e:(2,1)}\tilde{N}_{Q,\xi,\Omega}(a,b,T)\sim\lambda_{Q,\Omega}(b-a)T\hspace{2mm}{\rm{as}}\h\h T\rightarrow\infty\end{equation}
where $\lambda_{Q,\Omega}$ is as in~(\ref{volume}),  and $\tilde{N}_{Q,\xi,\Omega}$ is defined in~(\ref{inhcountingfunction}).
\end{thm}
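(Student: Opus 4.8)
The plan is to follow the dynamical strategy of~\cite{EMM1} and~\cite{EMM2}, adapted to the affine setting as in~\cite{DM3}. The starting point is the standard translation of the counting problem into a question about averages of a function on a homogeneous space. Write $G=\mathrm{SL}_3(\bbr)\ltimes\bbr^3$ acting on $\bbr^3$, let $H=\mathrm{SO}(Q)\ltimes\bbr^3$ (or rather the relevant affine stabilizer of $Q_\xi$), and let $\Gamma=\mathrm{SL}_3(\bbz)\ltimes\bbz^3$. Since $Q$ is split rational, $\mathrm{SO}(Q)$ is conjugate over $\bbq$ to $\mathrm{SO}(2,1)$ in its standard split form, so $\mathrm{SO}(Q)\cap\mathrm{SL}_3(\bbz)$ is a lattice and the relevant homogeneous subspace carries a finite invariant measure; the affine part is governed by the translation vector $\xi$. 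The counting function $\tilde N_{Q,\xi,\Omega}(a,b,T)$ is, up to lower-order terms, $\int$ over a large piece of $H\backslash G$ of the function $\widehat{f}$ counting lattice points of $\Gamma x$ landing in the region $\{v\in T\Omega : a<Q_\xi(v)<b\}$, and one needs to show this integral is asymptotic to the volume $\lambda_{Q,\Omega}(b-a)T$.

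The heart of the argument is an equidistribution statement: as $T\to\infty$, the translates $a(t)U\Gamma/\Gamma$ of a fixed compact piece (where $a(t)=\mathrm{diag}(e^t,1,e^{-t})$ acting on the $\mathrm{SL}_3$ factor and trivially or suitably on $\bbr^3$, and $U$ is the expanding horospherical piece) become equidistributed in $G/\Gamma$ with respect to Haar measure. In the homogeneous case this is Theorem~2.1-type input from Ratner theory together with the non-divergence estimates of Dani--Margulis; here the new ingredient is that the orbit lives in the affine group and the relevant unipotent flow involves the translation by $\xi$. This is exactly where the Diophantine hypothesis on $\xi$ enters: it is needed to rule out escape of mass and, more importantly, to rule out that the limiting measure is supported on a proper intermediate subgroup orbit --- such intermediate orbits correspond precisely to $\xi$ lying too close to a rational affine subspace, i.e. to a failure of the Diophantine condition. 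Concretely, one invokes the linearization technique (Dani--Margulis tubes around singular sets) and shows that the $\delta$-neighborhood of the ``bad'' set of orbits has measure going to $0$; the quantitative Diophantine inequality~(\ref{farfromrational}) provides the polynomial rate that makes this estimate work uniformly in $T$.

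The remaining steps are the by-now-standard bookkeeping of~\cite{EMM1}, Sections 3--4. First one proves the corresponding equidistribution and convergence statements --- these are referred to in the excerpt as theorems~\ref{bequidistribution1} and~\ref{bequidistribution2} --- giving the lower bound of Theorem~\ref{dmlowerbound} for free. For the upper bound one must control the contribution of $\Gamma$-orbit points near the exceptional (rational null) affine subspaces $L-v_\xi$: these are removed by definition of $\tilde N$, and one shows, again via the linearization and the fact that $Q$ has only finitely many rational null lines, that no \emph{other} concentration phenomenon occurs. One then passes from the statement for the indicator of a region with sharp boundary to the statement for general $\Omega$ by a standard sandwiching argument using continuity of $\nu$, and upgrades convergence in measure to the genuine asymptotic~(\ref{e:(2,1)}) by the Eskin--Margulis--Mozes argument that handles the non-compactness of the region via uniform integrability. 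The main obstacle, and the place where the argument is genuinely more delicate than~\cite{EMM1}, is the equidistribution step in the affine setting: one must verify that the Diophantine condition on $\xi$ is precisely strong enough to preclude the limit measure from being an intermediate-subgroup measure, and to do so with effective control so that the volume asymptotic survives the limit $T\to\infty$. Once that is in hand, the signature $(2,1)$ case is actually easier than $(2,2)$ because the hypothesis that $Q$ itself is split rational removes the EWAS-type pathology entirely, leaving the translation vector as the only source of irrationality to be controlled.
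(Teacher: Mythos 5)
Your high-level framing (translate to $G/\Gamma$ with $G=\mathrm{SL}_3(\bbr)\ltimes\bbr^3$, use equidistribution of $a_tK$-translates, reduce to the lower-bound of Theorem~\ref{dmlowerbound} plus a matching upper bound) is the right one, but you have misidentified where the Diophantine condition on $\xi$ actually does work, and this misconception would sink the argument. Since $Q$ is assumed to be a split \emph{rational} form, the orbit $H\ltimes\bbr^3\cdot\Lambda_{\mathfrak{q}\xi}$ is closed and carries a finite $H\ltimes\bbr^3$-invariant measure, completely independently of any Diophantine property of $\xi$; there is no ``intermediate-subgroup'' limit measure to rule out, and the equidistribution input (Theorems~\ref{bequidistribution1} and~\ref{bequidistribution2}) holds for bounded continuous test functions with no arithmetic hypothesis. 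The Diophantine hypothesis enters at a completely different stage: $\hat f$ (or rather $\tilde f$) is an \emph{unbounded} function on the space of inhomogeneous lattices, and the equidistribution theorem by itself says nothing about $\int_K\tilde f(a_tk\Lambda_{\mathfrak{q}\xi})\,dk$. One must control the measure of the set where $\tilde f$ is large, and in signature $(2,1)$ that set is governed (after the reductions of Remark~\ref{r;(2,1)}, which you do not make) entirely by the $1$-dimensional rational null subspaces $L$ of $Q$ --- there are order $T$ of them of norm up to $T$ --- and by whether the affine translates $a_tk(\mathfrak{q}(L+v+\xi))$ come close to the origin. The Diophantine condition on $\xi$ is used precisely to show (Lemma~\ref{fewbadsubspaces(2,1)}) that at most $O(\delta^{\eta_1}T)$ of the null subspaces in a dyadic range can contribute, which then feeds the summable estimate~(\ref{unboundedintegralnull(2,1)}). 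Your proposal conflates this unbounded-function issue with a nonexistent equidistribution pathology.

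Two further ingredients are missing that are not ``standard bookkeeping'': (a) the observation that $Q$ rational forces $Q(\bbz^3)$ to be discrete, so that $\alpha_1(a_tk\Lambda_{\mathfrak{q}\xi})$ being large must come from a \emph{null} subspace, and the reduction-theory fact that in rank one the $\alpha_2$ contribution is already controlled by $\alpha_1$ --- without these one cannot reduce to the null-subspace count at all; and (b) the actual mechanism of the count, which uses the parametrization of primitive isotropic vectors by primitive pairs $(m,n)\mapsto(m^2,mn,n^2)$ together with a Weyl-sum / major-arc argument (the paper cites Green--Tao) to convert the hypothesis ``$\{\langle v^L,\mathfrak{p}\xi\rangle_B\}\le c\delta$ for many $(m,n)$'' into a rational approximation of $\mathfrak{p}\xi$ that contradicts Definition~\ref{d:farfromrational}. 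Invoking linearization/Dani--Margulis tubes, as you propose, is not the tool the paper uses here and is not obviously a substitute, since the objects being counted are not orbits of unipotent subgroups near a singular set but individual integral isotropic vectors against a fixed irrational linear form.
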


The proofs of theorems~\ref{(2,2)} and~\ref{(2,1)} require new ingredients combined with ideas developed in~\cite{EMM1} and~\cite{EMM2}. Indeed what is new in theorem~\ref{(2,2)} compare to~\cite{EMM2} is the case when the homogeneous part $Q$ is EWAS. An important especial case is when $Q$ is a split rational form. In this case, and also theorem~\ref{(2,1)} above, we need to study the contribution coming from null subspaces to the counting function $\tilde{N}.$ This is done in sections~\ref{sec:quasinull} and~\ref{sec;(2,1)}.

{\it Eigenvalue spacing on flat 2-tori.} It has been conjectured by Berry and Tabor~\cite{BT} that the the eigenvalues of a generic quantized completely integrable Hamiltonian follow the statistics of a Poisson point-process, i.e their consecutive spacings should be independent and identically distributed exponentially distributed. Except some numerical experiments results which support the Berry-Tabor conjecture on a rigorous level have so far only been obtained for a statistic which is easier to handle. This is the pair correlation density function.

Let the Hamiltonian be the geodesic flow for flat 2-torus. It was proved by P.~Sarnak~\cite{S} that for almost all (with respect to Lebesgue measure on the moduli space of two dimensional flat tori) two dimensional flat tori the pair correlation density function converges to the pair correlation density of a Poisson process. He uses averaging arguments to reduce pair correlation problem to a problem about spacing between the values at integers of binary quadratic forms. This is related to the quantitative Oppenheim problem in the case of signature $(2,2).$ One corollary of theorem~\ref{emm2} is that Berry-Tabor conjecture holds for pair correlation of two dimensional flat tori under certain explicit Diophantine condition.

Similarly theorem~\ref{(2,2)} has a corollary in this direction. Let $\mathfrak{h}$ be a lattice in $\bbr^2$. Also let $\alpha=(\alpha_1,\alpha_2)\in\bbr^2.$ Now the eigenvalues of the Laplacian
\begin{equation}\label{laplacian}-\Delta=-\frac{\partial^2}{\partial y^2}-\frac{\partial^2}{\partial y^2}\end{equation}
with quasi periodicity conditions
\begin{equation}\label{quasiperiod}\phi(x+v)=e^{2\pi i\langle\alpha,v\rangle}\phi(x)\h\h\mbox{for all}\h\h x\in\bbr^2\h\h\mbox{and all}\h\h v\in\mathfrak{h}\end{equation}
are of the form $4\pi^2\|w+\alpha\|^2$ where $w\in\mathfrak{h}^*$ and $\mathfrak{h}^*$ is the dual lattice to $\mathfrak{h}.$ Let
\begin{equation}\label{eigenvalues}0\leq\lambda_0<\lambda_1\leq\lambda_2\cdots\end{equation}
be these eigenvalues counted with multiplicity. By the Weyl's law we have
\begin{equation}\label{weyl}\#\{j: \lambda_j\leq T\}\sim c_{\mathfrak{h}}\hh T\end{equation}
where $c_{\mathfrak{h}}=\frac{\mbox{covol}(\mathfrak{h})}{4\pi}.$ Let $0\notin(a,b)$ and define the {\it pair correlation function}
\begin{equation}\label{paircorel}
R_{\mathfrak{h},\alpha}(a,b,T)=\frac{\#\{(j,k): \lambda_j<T,\h\lambda_k<T,\h a\leq\lambda_j-\lambda_k\leq b\}}{T}
\end{equation}

Let now $\{w_1,w_2\}$ be a basis for $\mathfrak{h}^*$ and let $\beta=(\beta_1,\beta_2)$ be so that $\beta_1w_1+\beta_2w_2=\alpha.$ Consider $B(x_1,x_2)=4\pi^2\|x_1w_1+x_2w_2+\alpha\|^2.$ Indeed the above mentioned eigenvalues are the values at integer points of the form $B_\beta.$ Let $\xi=(\beta,\beta)\in\bbr^4$ and define $Q(x_1,x_2,x_3,x_4)=B(x_1,x_2)-B(x_3,x_4).$ The pair correlation function for these eigenvalues, in the case $0\notin(a,b),$ is now asymptotically the counting function $\tilde{N}_{Q,\xi,\Omega}(a,b,T),$ with $\Omega=\{x:\h\max(B(x_1,x_2)^{1/2}, B(x_3,x_4)^{1/2})\leq1\}.$ Thus we obtain the following

\begin{cor}\label{cor;paircorel}
Let $\mathfrak{h}$ be a lattice in $\bbr^2$ and let $B$ be the quadratic form obtained as above normalized so that one of the coefficients is $1$ and let $A_1$ and $A_2$ be other coefficients. Also let vector $\alpha\in\bbr^2$ be given and define $\beta$ as above. Suppose that at least one of the following holds
\begin{itemize}
\item[(i)] The vector $\beta=(\beta_1,\beta_2)$ is Diophantine.
\item[(ii)] There exists $N,C>0$ such that for all triples of integers $(p_1,p_2,q)$ with $q\geq2,$
$$\max_{i=1,2}\left|A_i-\frac{p_i}{q}\right|>\frac{C}{q^N}$$
\end{itemize}

Then for any interval $(a,b)$ with $0\notin(a,b)$ we have
\begin{equation}\label{e;paircor}
\lim_{T\rightarrow\infty}R_{\mathfrak{h},\alpha}(a,b,T)=c_{\mathfrak{h}}^2(b-a)
\end{equation}
Hence the spectrum satisfies the Berry-Tabor conjecture for pair correlation function.
\end{cor}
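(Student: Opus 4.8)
The plan is to reduce the pair correlation statement directly to Theorem~\ref{(2,2)} via Sarnak's averaging trick, so the bulk of the work is bookkeeping plus checking that the ``Diophantine'' hypothesis of Definition~\ref{diophantineforms} is implied by either (i) or (ii). First I would set up the dictionary: with $\{w_1,w_2\}$ a basis of $\mathfrak{h}^*$ and $B(x_1,x_2)=4\pi^2\|x_1w_1+x_2w_2+\alpha\|^2$, the eigenvalues $4\pi^2\|w+\alpha\|^2$ with $w\in\mathfrak{h}^*$ are exactly the values $B_\beta(m)=B(m+\beta)$ for $m\in\bbz^2$, where $\beta$ is defined by $\beta_1w_1+\beta_2w_2=\alpha$. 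Hence, writing $Q(x)=B(x_1,x_2)-B(x_3,x_4)$ and $\xi=(\beta,\beta)\in\bbr^4$, a difference of eigenvalues $\lambda_j-\lambda_k$ is precisely a value $Q_\xi(x)$ for $x=(m,m')\in\bbz^4$, and the constraint $\lambda_j,\lambda_k<T$ becomes $x\in T\Omega$ for $\Omega=\{x:\max(B(x_1,x_2)^{1/2},B(x_3,x_4)^{1/2})\le 1\}$ (which is of the form allowed in the introduction, with $\nu$ the continuous function cut out by $B$ on the sphere). Since $Q$ has signature $(2,2)$, Theorem~\ref{(2,2)} gives $\tilde N_{Q,\xi,\Omega}(a,b,T)\sim\lambda_{Q,\Omega}(b-a)T^2$ as $T\to\infty$; a direct computation of the volume constant in~(\ref{volume}) for this particular $\Omega$ and $Q$ yields $\lambda_{Q,\Omega}=c_{\mathfrak{h}}^2$, using $c_{\mathfrak{h}}=\covol(\mathfrak{h})/4\pi$ and $\covol(\mathfrak{h}^*)=1/\covol(\mathfrak{h})$, matching the shape of Weyl's law~(\ref{weyl}).

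Next I would argue that, because $0\notin(a,b)$, the excluded points do not matter: the exceptional (rational null) subspaces of $Q_\xi$, being of the form $L-v_\xi$ with $L$ a rational null $2$-plane of $Q$, contribute only values $Q_\xi(x)$ that are either $0$ or, more to the point, lie in a set whose count inside $T\Omega$ is $O(T)=o(T^2)$; so $\tilde N_{Q,\xi,\Omega}(a,b,T)$ and $N_{Q,\xi,\Omega}(a,b,T)$ have the same $T^2$ asymptotics. On the eigenvalue side, $R_{\mathfrak{h},\alpha}(a,b,T)$ counts ordered pairs $(j,k)$ with $a\le\lambda_j-\lambda_k\le b$ and $\lambda_j,\lambda_k<T$, divided by $T$; translating via the dictionary, the numerator is $N_{Q,\xi,\Omega}(a,b,\sqrt{T})$ up to negligible boundary terms (the passage from the open interval $(a,b)$ in the counting function to the closed $[a,b]$ in $R$, and from $<T$ to $\le T$, changes the count by $o(T^2)$ as long as $a,b$ are not themselves achieved on a positive-density set, which again follows since $0\notin(a,b)$ and the level sets $Q_\xi=a$, $Q_\xi=b$ carry $o(T^2)$ integer points in $T\Omega$). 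Dividing by $T$ and using $(\sqrt T)^2=T$ we get $R_{\mathfrak{h},\alpha}(a,b,T)\to\lambda_{Q,\Omega}(b-a)=c_{\mathfrak{h}}^2(b-a)$.

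It then remains to verify the hypothesis of Theorem~\ref{(2,2)}, i.e.\ that $Q_\xi$ is Diophantine in the sense of Definition~\ref{diophantineforms}: either $Q$ is not EWAS, or $\xi=(\beta,\beta)$ is Diophantine. Case (i) of the corollary is immediate: if $\beta$ is $\kappa$-Diophantine then so is $(\beta,\beta)\in\bbr^4$ (any rational approximation to $(\beta,\beta)$ restricts to one of $\beta$ with the same denominators, so the lower bound~(\ref{farfromrational}) is inherited, possibly with a worse constant), hence $\xi$ is Diophantine and we are done regardless of $Q$. For case (ii) I would show that the condition $\max_{i}|A_i-p_i/q|>C q^{-N}$ for all integers $p_1,p_2$ and $q\ge 2$ forces $Q$ not to be EWAS: if $Q$ were EWAS, then arbitrarily good split integral approximations $\frac1r Q'$ to $Q$ would, after tracking how the coefficients $A_1,A_2$ of the normalized binary form $B$ sit inside the coefficients of $Q=B\ominus B$, produce rational approximations $p_i/q$ to $A_i$ with $q\le r$ and error $\le r^{-N'}$ for every $N'$, contradicting (ii). The main obstacle is precisely this last implication: making the passage ``$Q$ EWAS $\Rightarrow$ $(A_1,A_2)$ very well approximable by rationals with a common denominator'' quantitatively correct requires unwinding the relation between a split rational form $Q'$ in four variables and the binary form $B$ it induces (the null structure of a $(2,2)$ split form is what pins down the coefficients $A_i$ as ratios of entries of $Q'$), together with the normalization ``one coefficient equal to $1$''; everything else is routine once the dictionary in the first paragraph is in place.
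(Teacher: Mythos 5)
Your reduction to Theorem~\ref{(2,2)} via the Sarnak-style dictionary is exactly what the paper does (the paper's ``proof'' of the corollary consists only of the dictionary in the paragraph preceding the statement), the volume computation identifying $\lambda_{Q,\Omega}$ with $c_{\mathfrak h}^2$ is correct, and your verification of the Diophantine hypothesis of Definition~\ref{diophantineforms} in both cases (i) and (ii) is sound. Two small remarks. First, an exceptional $2$-plane intersected with $T\Omega$ contains $O(T^2)$ lattice points, not $O(T)$; the reason they are harmless is entirely the first alternative you mention, namely that $Q_\xi\equiv 0$ on such a plane and $0\notin(a,b)$, so the ``more to the point'' clause is actually the wrong point. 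Second, your worry that the implication ``$Q$ EWAS $\Rightarrow (A_1,A_2)$ very well approximable with common denominator'' requires unwinding the null structure of the split form $Q'$ is unfounded: since one coefficient of $B$ is normalized to $1$, the corresponding integer coefficient of $Q'$ forces $r$ to lie within $r^{1-N}$ of an integer $q\geq 2$, and the remaining coefficients of $Q'$ (read off directly from $\|Q-\tfrac1r Q'\|\leq r^{-N}$) then give $|A_i-p_i/q|=O(q^{-N+1})$ by the triangle inequality, which contradicts (ii). No linear-algebraic analysis of the null planes of $Q'$ is needed.
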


In the case $\mathfrak{h}=\bbz^2$ this was proved by J.~Marklof~\cite{M}. His approach utilizes results from theory of unipotent flows combined with application of theta sums. We also use the theory of unipotent flows in our proof however our strategy to control the integral of unbounded functions over certain orbits is dynamical and rests heavily on~\cite{EMM1} and~\cite{EMM2}.

\textit{Outline of the proof.} Let $Q_\xi$ be a quadratic form of signature $(p,q)$ and let $n=p+q.$ Fix an interval $(a,b)$ and let $U\subset\bbr^n$ be a ``suitably chosen" compact set such that $a<Q(u)<b,$ for all $u\in U.$ We want to count the number of vectors $v\in\bbz^n$ with $T/2\leq\|v+\xi\|\leq T,$ such that $a<Q(v+\xi)<b.$ Note that ${\rm{SO}}(Q)$ acts transitively on the level sets of $Q$ hence there exists some $g\in{\rm{SO}}(Q)$ such that $g(v+\xi)\in U.$ Now if we let $\hat{f}(g(\bbz^n+\xi))=\#(g(\bbz^n+\xi)\cap U)$ then ${N}_{Q,\xi}(a,b,T)-{N}_{Q,\xi}(a,b,T/2)$ can be approximated by the integral $T^{n-2}\int_H\hat{f}(g(\bbz^n+\xi))dg.$ Integrals of this form are the main object of study in~\cite{EMM1} and~\cite{EMM2}. The question in hand is that of equidistribution results for unbounded functions. One obtains the lower bound by approximating $\hat f$ by compactly supported functions as it was done in~\cite{DM3}. However in order to obtain the upper bound one needs to deal with the structure at infinity of the space of lattices or in our case the space of inhomogeneous lattices.

{\bf Acknowledgments.}
We would like to thank J. Marklof for reading the first draft and many helpful comments.


\section{Passage to space of inhomogeneous lattices}\label{sec;inhomo}
As was outlined above, and is done in~\cite{EMM1} and~\cite{EMM2}, our approach is to translate the problem into a problem on homogeneous spaces and then borrow from the rich structure in there. As we are working with inhomogeneous forms the space of main interest will be $\SL_n(R)\ltimes\bbr^n/\SL_n(\bbz)\ltimes\bbz^n,$ which is naturally identified with the space of inhomogeneous unimodular lattices in $\bbr^n.$

\textit{Quadratic forms.} Let $n\geq3$ and let $n=p+q$ where $p\geq2.$ Let $\{e_1,\cdots,e_n\}$ be the standard basis for $\bbr^n.$ If $p\geq3$ let $B$ be the ``standard" form
\begin{equation}\label{standardnot(2,2)}B\left(\sum_{i=1}^{i}x_ie_i\right)=2x_1x_n+\sum_{i=2}^{p}x_i^2-\sum_{i=p+1}^{n-1}x_i^2\end{equation}
Let $H={\rm{SO}}(B)$ and $\{a_t\}$ be the one-parameter subgroup of $H$ given by $a_te_1=e^{-t}e_1,$ $a_te_i=e_i$ for $2\leq i\leq n-1$ and $a_te_n=e^{t}e_n.$ And let $K=H\cap\hat{K}$ where $\hat{K}$ is the group of orthogonal matrices with determinant 1. We let $dk$ denote the Haar measure on $K$ normalized so that $K$ is a probability space.

If $(p,q)=(2,2)$ we let
\begin{equation}\label{standard(2,2)}B(x_1,x_2,x_3,x_4)=x_1x_4-x_2x_3\end{equation}
be the standard form on $\bbr^4.$ This is the determinant on $\mbox{M}_2(\bbr),$ if we identify $\bbr^4$ with $\mbox{M}_2(\bbr).$ Note that this identification shows that $\mbox{SO}(2,2)$ is locally isomorphic to $\SL_2(\bbr)\times\SL_2(\bbr)$ with the action $v\rightarrow g_1vg_2^{-1},$ which leaves the determinant invariant. We let $H=\SL_2(\bbr)\times\SL_2(\bbr),$ $K=\mbox{SO}(2)\times\mbox{SO}(2)$ and $a_t=(b_t,b_t)$ where $b_t=\mbox{diag}(e^{-t/2},e^{t/2}).$ We let $dk$ denote the Haar measure on $K$ normalized so that $K$ is a probability space. We will often work with the standard lattice $\bbz^4$ and the form $Q$ in which case we continue to denote by $\{a_t\}$ and $K$ the corresponding one parameter and maximal compact subgroup of $\mbox{SO}(Q).$

If $(p,q)=(2,1)$ we let
\begin{equation}\label{standard(2,1)}B(x_1,x_2,x_3)=x_1x_3-x_2^2\end{equation}
be the standard form on $\bbr^3.$ This is the determinant on $\mbox{Sym}_2(\bbr),$ the space $2\times2$ symmetric matrices, if identify $\bbr^3$ with $\mbox{Sym}_2(\bbr).$ This identification shows that $\mbox{SO}(2,1)$ is locally isomorphic to $\SL_2(\bbr)$ with the action $v\rightarrow gv{}^tg,$ where ${}^tg$ is the transpose matrix. We let $H=\SL_2(\bbr).$ We let $a_t=\mbox{diag}(e^{-t/2},e^{t/2})$ and let $K=\mbox{SO}(2)$ be the maximal compact subgroup of $H.$ As before $dk$ denotes the normalized Haar measure on $K.$

Let $f$ be a continuous function with compact support on $\bbr^n$ we define the theta transform of $f$ by
\begin{equation}~\label{thetatrans}\hat{f}(\Lambda+\xi)=\sum_{v\in\Lambda+\xi}f(v)\end{equation}
where $\Lambda+\xi$ is any unimodular inhomogeneous lattice in $\bbr^n.$ Note that $\hat{f}$ is a function on the space of inhomogeneous lattices.

We fix some more notations. Let $n=p+q$ and let $G=\SL_n(\bbr)\ltimes\bbr^n.$ Let $\Gamma=\SL_n(\bbz)\ltimes\bbz^n$ which is a lattice in $G.$
We have the following, which is similar to Siegel's integral formula.

\begin{lem}\label{siegelformula}
Let $f$ and $\hat{f}$ be as above. Let $\mu$ be a probability measure on $G/\Gamma$ which is invariant under $\bbr^n.$ Then
\begin{equation}\label{e:siegelformula}\int_{G/\Gamma}\hat{f}(g)\hh d\mu(g)=\int_{\bbr^n}f(x)\hh dx\end{equation}
\end{lem}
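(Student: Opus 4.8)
The statement is an inhomogeneous analogue of Siegel's mean value formula, and the natural strategy is to reduce it to the classical Siegel formula on $\SL_n(\bbr)/\SL_n(\bbz)$ by exploiting the $\bbr^n$-invariance of $\mu$. First I would set up the fibration $G/\Gamma \to \SL_n(\bbr)/\SL_n(\bbz)$ coming from the semidirect product structure: writing an element of $G$ as $(g,v)$ with $g\in\SL_n(\bbr)$, $v\in\bbr^n$, the quotient $G/\Gamma$ fibers over $\SL_n(\bbr)/\SL_n(\bbz)$ with fiber $\bbr^n/\bbz^n$ (the torus), where the fiber over $g\SL_n(\bbz)$ parametrizes the translates $g\bbz^n + \xi$ for $\xi$ ranging over a fundamental domain of $g\bbz^n$. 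The key point is that the function $\hat f$ restricted to a fiber is exactly $\xi \mapsto \sum_{v\in g\bbz^n+\xi} f(v)$, i.e. the periodization of $f$ along the lattice $g\bbz^n$, evaluated on the torus.

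**Key steps.** The argument then proceeds in three moves. (1) Disintegrate $\mu$ over its pushforward $\bar\mu$ to $\SL_n(\bbr)/\SL_n(\bbz)$: since $\mu$ is $\bbr^n$-invariant and the $\bbr^n$-action is transitive on each torus fiber, the conditional measures of $\mu$ on the fibers must be the normalized Haar (Lebesgue) probability measure on each torus $\bbr^n/g\bbz^n$. This is the one genuine use of the invariance hypothesis and it pins down $\mu$ completely in terms of $\bar\mu$. (2) Compute the fiber integral: for a fixed lattice $\Lambda = g\bbz^n$ (unimodular), integrating $\hat f(\Lambda+\xi)$ over $\xi$ in the torus against normalized Haar measure unfolds the sum — $\int_{\bbr^n/\Lambda} \sum_{v\in\Lambda+\xi} f(v)\, d\xi = \int_{\bbr^n} f(x)\, dx$ — because the translates $\Lambda + \xi$ tile $\bbr^n$ as $\xi$ ranges over a fundamental domain, so the inner sum-then-integrate collapses to a single integral over all of $\bbr^n$. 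Crucially this value is \emph{independent of $\Lambda$}. (3) Integrate the (constant) fiber integral against $\bar\mu$, which is a probability measure, to conclude $\int_{G/\Gamma}\hat f\, d\mu = \int_{\bbr^n} f(x)\, dx$. Note that, in contrast to the classical Siegel formula, no Siegel-type theorem is actually invoked: the $\bbr^n$-invariance does all the work, and $\bar\mu$ need not be the Haar measure on $\SL_n(\bbr)/\SL_n(\bbz)$.

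**Main obstacle.** The only delicate point is making the disintegration and the identification of the conditional measures rigorous: one must check that the fibers of $G/\Gamma \to \SL_n(\bbr)/\SL_n(\bbz)$ are genuinely tori $\bbr^n/g\bbz^n$ on which $\bbr^n$ acts by translation, that this action is proper and transitive on fibers, and hence that an $\bbr^n$-invariant probability measure has Haar conditionals (uniqueness of invariant probability measure on a compact homogeneous space). I would handle this by working in a fundamental domain: fix a Borel fundamental domain $\mathcal F \subset \SL_n(\bbr)$ for $\SL_n(\bbz)$ and, over each $g\in\mathcal F$, a fundamental domain $\mathcal F_g \subset \bbr^n$ for $g\bbz^n$, giving a Borel isomorphism $\mathcal F \times$ (torus bundle) $\cong G/\Gamma$; then Fubini plus the translation-invariance of $\mu$ forces the fiberwise factor to be normalized Lebesgue measure. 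Once that is in place the rest is the elementary unfolding computation in step (2), which requires only that $f$ is continuous with compact support so that the periodization $\sum_{v\in\Lambda+\xi} f(v)$ is a finite sum and all interchanges of sum and integral are justified by dominated convergence (or simply Tonelli, $f$ being real-valued we may split into positive and negative parts, or assume $f\geq 0$ first and then use linearity).
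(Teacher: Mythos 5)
Your proof is correct and follows essentially the same route as the paper's (very terse) argument, which says only that the lemma ``follows from the fact that $\mu$ is $\bbr^n$-invariant and Fubini's theorem''; you have simply spelled out the disintegration over $\SL_n(\bbr)/\SL_n(\bbz)$, the identification of the conditional measures on the torus fibers as normalized Haar via $\bbr^n$-invariance and transitivity, and the unfolding of the periodization over a unimodular fundamental domain.
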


\begin{proof}
Note that $\bbr^n$ is the unipotent radical of $G.$ Now the lemma follows from the fact that $\mu$ is $\bbr^n$-invariant and Fubini's theorem.
\end{proof}

We end this section by recalling the definition of the $\alpha$ functions defined on the space of lattices. Let $\Delta$ be a lattice in $\bbr^n.$ A subspace $L$ of $\bbr^n$ is called $\Delta$-{\it rational} if $L\cap\Delta$ is a lattice in $L.$ For a $\Delta$-rational subspace $L$ let $\frac{1}{d(L)}$ be the volume of $L/(L\cap\Delta).$ For $0\leq i\leq n$ define
\begin{equation}\label{alphai}\alpha_i(\Delta)=\sup\left\{\frac{1}{d(L)}\h:\h L\h\mbox{is a}\h\Delta\mbox{-rational subspace of dimension}\h i\right\}\end{equation}
and let $\alpha(\Delta)=\max_i\alpha_i(\Delta).$ Now if $\Delta_\xi=\Delta+\xi$ is an inhomogeneous lattice let $\alpha(\Delta_\xi)=\alpha(\Delta).$ There is a constant $c=c(f)$ depending on $f$ such that for any inhomogeneous lattice $\Delta_\xi=\Delta+\xi$ we have
\begin{equation}\label{lipschitzprinciple}\hat{f}(\Delta_\xi)<c\h\alpha(\Delta_\xi)=c\h\alpha(\Delta)\end{equation}
This is analogue of~\cite[Lemma 2]{Sc} and the proof is similar.


\section{The case of where $p\geq3$.}\label{sec;not(2,2)}
In this section we prove theorem~\ref{uniformnot(2,2)} modulo results proved in appendix~\ref{sec:equi}. As we mentioned the proof is an easy adaptation of the proof of~\ref{emm1}. We include it for the sake of completeness. Hence through out this section we assume $p\geq3$ and $q\geq1.$ Let us recall the following

\begin{thm}\label{emm1alpha}{\rm{(\cite[Theorem 3.2]{EMM1})}}
If $p\geq3$ and $q\geq1$ and $0<s<2$ then for any lattice $\Delta$ in $\bbr^n$
\begin{equation}\label{e:emm1alpha}\sup_{t>0}\int_K\alpha^s(a_tk\Delta)dk<\infty\end{equation}
The upper bound is uniform as $\Delta$ varies over compact sets in the space of lattices.
\end{thm}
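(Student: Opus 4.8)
The plan is to reduce the estimate, through the flag structure of sublattices and a passage to exterior powers, to a single linear-algebraic bound on a $K$-average whose constant does not deteriorate as $t\to\infty$, and then to recover the assertion --- including the uniformity on compacta --- from continuity of $\alpha$ together with an elementary Lipschitz bound for the flow.

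Write $(A_tf)(\Delta)=\int_K f(a_tk\Delta)\,dk$. First, since $\alpha=\max_{0\le i\le n}\alpha_i$ with $\alpha_0\equiv 1$ and $\alpha_n$ invariant under $\mathrm{SO}(B)$, and since there are only finitely many indices, it suffices to control $\sup_{t>0}A_t(\alpha_i^s)$ for $1\le i\le n-1$. Next I would replace the $\alpha_i$, which are suprema of $d(L)^{-1}$ over $\Delta$-rational $i$-dimensional subspaces $L$, by \emph{individual} subspaces: the submodularity $d(L_1\cap L_2)\,d(L_1+L_2)\le d(L_1)\,d(L_2)$ --- valid because sums and intersections of $\Delta$-rational subspaces are $\Delta$-rational --- forces the subspaces of covolume below a fixed dimensional constant to form a chain, and the counting argument of Dani--Margulis (used also in~\cite{EMM1} and~\cite{EMM2}), fed with the single-subspace bound below, turns this into a recursive inequality of the schematic form
\[
A_t(\alpha_i^s)(\Delta)\ \le\ C\,e^{-\delta t}\big(\alpha_{i-1}^s+\alpha_i^s+\alpha_{i+1}^s\big)(\Delta)\ +\ C\qquad(t\ge T_0),
\]
in which the subspaces that become short under $a_tk$ are charged to the $\alpha$-functions of neighbouring dimension and the additive constant absorbs the subspaces of bounded covolume. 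Thus the whole theorem rests on a uniform estimate of $A_t\big(d(\,\cdot\,L)^{-s}\big)$ for a fixed $\Delta$-rational subspace $L$.

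This is the crux. Picking a basis $v_1,\dots,v_j$ of $L\cap\Delta$ and putting $w=v_1\wedge\cdots\wedge v_j\in\Lambda^j\bbr^n$, one has $d(a_tkL)=\|(a_tk)^{\wedge j}w\|$; since $K$ acts by orthogonal transformations on $\Lambda^j\bbr^n$ while $\{a_t^{\wedge j}\}$ is a commuting semisimple group with eigenvalues among $e^{-t},1,e^{t}$, the problem becomes the purely linear estimate $\int_K\|(a_tk)^{\wedge j}w\|^{-s}\,dk\le C\,e^{-\mu t}\|w\|^{-s}$ for $t\ge T_0$, with $C,\mu,T_0$ depending only on $n$ and $s$. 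To prove it I would write the left-hand side as $s\int_0^\infty\varepsilon^{-s-1}\,\mathrm{meas}\{k\in K:\|(a_tk)^{\wedge j}w\|<\varepsilon\}\,d\varepsilon$ and estimate, for each small $\varepsilon$ and each large $t$, the measure of the set of $k$ for which $a_t$ pushes $kw$ into an $\varepsilon$-neighbourhood of the most contracted eigenspace of $a_t^{\wedge j}$ --- equivalently, for which $kL$ is abnormally close both to containing $\bbr e_1$ and to lying inside $\langle e_1,\dots,e_{n-1}\rangle$. Because $K\cong\mathrm{SO}(p)\times\mathrm{SO}(q)$ is \emph{not} transitive on spheres, this requires tracing how the $\mathrm{SO}(p)$- and $\mathrm{SO}(q)$-orbits meet the distinguished hyperbolic plane $\langle e_1,e_n\rangle$ on which $\{a_t\}$ acts by boosts; the hypotheses $p\ge 3$ and $q\ge 1$ force the relevant bad set to have codimension at least two in $K$, which is exactly what makes the $\varepsilon$-integral converge --- and it converges precisely when $s<2$ --- and what yields the decaying factor $e^{-\mu t}$.

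Finally, summing the recursive inequality against weights $w_i=\rho^i$ (fixed $\rho$) and enlarging $T_0$ gives $\lambda\in(0,1)$ and a constant $C'$, both independent of $\Delta$ and $t$, with $A_t\Phi\le\lambda\Phi+C'$ for $t\ge T_0$, where $\Phi=\sum_i\rho^i\alpha_i^s$; and for $0<t<T_0$ the elementary bound $\alpha(g\Delta)\le\|g\|^n\|g^{-1}\|^n\alpha(\Delta)$ (from $d(gL)\le\|g\|^{\dim L}d(L)$) together with the $K$-invariance of $\alpha$ gives $A_t\Phi(\Delta)\le C(T_0)\,\Phi(\Delta)$. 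Hence $\sup_{t>0}A_t\Phi(\Delta)\le C(T_0)\Phi(\Delta)+C'<\infty$, and since $\Phi$ is continuous on the space of lattices this bound is uniform as $\Delta$ ranges over a compact set; as $\alpha^s\le C''(1+\Phi)$ the same follows for $A_t\alpha^s$. The hard part is the measure estimate of the crux step --- bounding, uniformly in $t$, the size of the set of directions $k\in K$ along which $a_t$ over-contracts a given subspace. It is the only place the fine geometry of $(\{a_t\},K)\subset\mathrm{SO}(B)$ enters, it is exactly where $p\ge 3$, $q\ge 1$ and the threshold $s<2$ become indispensable, and it is precisely the obstruction that, in signatures $(2,2)$ and $(2,1)$, forces the extra Diophantine conditions used later in the paper.
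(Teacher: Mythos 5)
Your proposal follows essentially the same route as the cited proof in [EMM1]: reduction of the $\alpha$-function to individual rational subspaces via submodularity and the chain/flag argument, passage to exterior powers so that $d(a_tkL)$ becomes $\|(a_tk)^{\wedge j}w\|$, a $K$-averaged estimate of that norm raised to the $-s$ (with $p\ge 3$, $q\ge 1$ supplying the codimension needed to make the layer-cake integral converge precisely for $s<2$), and then a weighted sum $\Phi=\sum_i\rho^i\alpha_i^s$ satisfying an inequality $A_t\Phi\le\lambda\Phi+C'$ for large $t$, with the range $t<T_0$ handled by the elementary Lipschitz bound and uniformity on compacta coming from continuity of $\alpha$. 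This is the EMM argument; the only point to flag is that the paper (and [EMM1]) only needs the quantity $\int_K\|a_tkw\|^{-s}\,dk\,/\,\|w\|^{-s}$ to be $\le\epsilon$ for $t\ge T_0(\epsilon)$, uniformly in $w$ --- the sharper exponential rate $Ce^{-\mu t}$ you wrote is not required for the conclusion, and asserting it with constants depending only on $n,s$ would need its own justification.
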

Theorem~\ref{uniformnot(2,2)} is proved using the following which is a result of combining theorems~\ref{emm1alpha},~\ref{bequidistribution1} and \ref{bequidistribution2}. We have

\begin{thm}\label{unbddequinot(2,2)}{\rm{(cf. \cite[Theorem 3.5]{EMM1})}}
Suppose $p\geq3$ and $q\geq1.$ Let $f$ and $\hat{f}$ be as above. 
Let $\nu$ be any continuous function on $K.$ Then for every compact subset $\mathcal{D}$ of $G/\Gamma$ there exists finitely many points $x_1,\cdots,x_\ell\in G/\Gamma$ such that
\begin{itemize}
\item[(i)] the orbit $H\hh x_i$ is closed and has finite $H$-invariant measure, for all $i,$
\item[(ii)]  for any compact set $F\subset\mathcal{D}\setminus\bigcup_iH\hh x_i$ there exists $t_0>0$ such that for all $x\in F$ and $t>t_0$
\begin{equation}\label{ubddaverage}\left|\int_K\hat{f}(a_tkx)\hh\nu(k)\hh dk-\int_{G/\Gamma}\hat f\hh d\mu\int_K\nu\hh dk\right|\leq\vare\end{equation}
\end{itemize}
where $\mu$ is either the $G$-invariant measure on $G/\Gamma$ or $H\ltimes\bbr^n\hh x$ is closed and has $H\ltimes\bbr^n$-invariant probability measure and $\mu$ is this measure.

\end{thm}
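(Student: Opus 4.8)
The strategy is the standard one from~\cite[\S3]{EMM1}: combine the equidistribution of $K$-orbits under $a_t$ (coming from Ratner theory, i.e.\ theorems~\ref{bequidistribution1} and~\ref{bequidistribution2}) with the uniform $L^s$-bound on the $\alpha$-function (theorem~\ref{emm1alpha}) to upgrade equidistribution against bounded continuous test functions to equidistribution against the unbounded theta transform $\hat f$. First I would fix, for $\ell \in \bbn$, the compactly supported truncation $\hat f^{(\ell)} := \min(\hat f, \ell)$, a bounded continuous function on $G/\Gamma$ (continuity holds because $\hat f$ is, being a finite sum locally by properness of $f$; indeed $\hat f$ may be unbounded but is continuous). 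Since $\hat f^{(\ell)}$ is bounded and continuous, theorems~\ref{bequidistribution1} and~\ref{bequidistribution2} apply: for the given compact $\mathcal{D}$ there are finitely many points $x_1,\dots,x_\ell\in G/\Gamma$ with $Hx_i$ closed of finite $H$-invariant volume, such that off any compact $F\subset\mathcal{D}\setminus\bigcup_i Hx_i$ the averages $\int_K \hat f^{(\ell)}(a_tkx)\nu(k)\,dk$ converge as $t\to\infty$, uniformly in $x\in F$, to $\int \hat f^{(\ell)}\,d\mu\,\int_K\nu\,dk$, where $\mu$ is the invariant measure on the relevant homogeneous subspace (the $G$-invariant one on $G/\Gamma$, or the $H\ltimes\bbr^n$-invariant probability measure on a closed orbit $H\ltimes\bbr^n\,x$). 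Here I should note that the exceptional set of closed $H$-orbits and the list of possible limit measures $\mu$ is the same for every $\ell$ — this is where one uses the classification of intermediate invariant measures and closed orbits in the inhomogeneous space $G/\Gamma$, which is exactly the content of theorems~\ref{bequidistribution1}--\ref{bequidistribution2}; so the finite set $\{x_i\}$ can be chosen once and for all.

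Next I would control the tail $\hat f - \hat f^{(\ell)}$, which is supported on the set where $\hat f > \ell$. By the Lipschitz-type inequality~(\ref{lipschitzprinciple}), $\hat f(\Delta_\xi) < c\,\alpha(\Delta_\xi)$, so $\{\hat f > \ell\} \subset \{\alpha > \ell/c\}$, and on this set $\hat f \le c\,\alpha \le c\,\alpha^s(c/\ell)^{s-1}$ for any $0<s<2$ — wait, more carefully: $0 \le \hat f - \hat f^{(\ell)} \le \hat f \cdot \mathbf{1}_{\{\hat f > \ell\}} \le c\,\alpha\,\mathbf{1}_{\{c\alpha > \ell\}} \le c\,\alpha \cdot (c\alpha/\ell)^{1-s} \cdot \mathbf{1}_{\{c\alpha>\ell\}}$ is wrong in sign since $1-s$ can be positive; instead use $\mathbf{1}_{\{c\alpha>\ell\}} \le (c\alpha/\ell)^{2-s}$ so that $\hat f - \hat f^{(\ell)} \le c\alpha (c\alpha/\ell)^{2-s} = c^{3-s}\ell^{s-2}\alpha^{3-s}$... this also requires the exponent to be controlled. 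The clean route, following~\cite{EMM1}, is: choose $1<s<2$, and estimate
\begin{equation}\label{e:tailbound}
\int_K \bigl(\hat f - \hat f^{(\ell)}\bigr)(a_tkx)\,|\nu(k)|\,dk \le \|\nu\|_\infty \int_K c\,\alpha(a_tkx)\,\mathbf{1}_{\{\alpha(a_tkx) > \ell/c\}}\,dk \le \|\nu\|_\infty\, c\,(c/\ell)^{s-1}\int_K \alpha^s(a_tkx)\,dk,
\end{equation}
and then apply theorem~\ref{emm1alpha}, which gives $\sup_{t>0}\int_K\alpha^s(a_tkx)\,dk \le M$ for a constant $M$ uniform over $x$ in the compact set $\mathcal D$. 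Hence the tail is $\le \|\nu\|_\infty\, c^s\, M\, \ell^{1-s}$, which is $<\vare/3$ once $\ell$ is large, uniformly in $t>0$ and $x\in\mathcal D$. The same bound controls the difference $\int \hat f\,d\mu - \int \hat f^{(\ell)}\,d\mu$ for each of the finitely many limit measures $\mu$, because theorem~\ref{emm1alpha}'s bound passes to these invariant measures by a limiting argument (each $\mu$ is a weak-$*$ limit of push-forwards of $K$-averages, or one applies the analogous $\alpha$-integrability statement directly on the closed $H$-orbits).

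Finally I would assemble the three estimates with a triangle inequality: fix $\ell$ so the two tail terms are each $<\vare/3$; then for $x\in F$ and $t>t_0(\ell)$ the bounded-function equidistribution makes $\bigl|\int_K \hat f^{(\ell)}(a_tkx)\nu(k)\,dk - \int\hat f^{(\ell)}\,d\mu\int_K\nu\,dk\bigr| < \vare/3$; combining gives~(\ref{ubddaverage}). \textbf{The main obstacle} I anticipate is not the truncation bookkeeping — that is routine — but rather ensuring that the exceptional orbit set $\bigcup_i Hx_i$ and the list of possible limit measures $\mu$ produced by theorems~\ref{bequidistribution1}--\ref{bequidistribution2} are genuinely independent of the truncation level $\ell$ and, more subtly, that theorem~\ref{emm1alpha} (stated for the homogeneous space of lattices $\bbr^n$) transfers correctly to the inhomogeneous setting $G/\Gamma$ with the inhomogeneous $\alpha$-function $\alpha(\Delta_\xi)=\alpha(\Delta)$; since the inhomogeneous $\alpha$ ignores the translation part, the needed integrability $\sup_t\int_K\alpha^s(a_tk\Delta)\,dk<\infty$ is literally theorem~\ref{emm1alpha} applied to the homogeneous lattice $\Delta$ underlying $\Delta_\xi$, and the uniformity over compact subsets of $G/\Gamma$ follows since the projection $G/\Gamma \to \SL_n(\bbr)/\SL_n(\bbz)$ is proper on compacta. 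Once that transfer is made precise, the argument is a direct inhomogeneous transcription of~\cite[Theorem 3.5]{EMM1}.
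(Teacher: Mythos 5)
Your strategy is essentially the paper's: split $\hat f$ into a ``nice'' part that equidistributes by theorems~\ref{bequidistribution1} and~\ref{bequidistribution2}, and a ``tail'' controlled uniformly in $t$ by theorem~\ref{emm1alpha}. The tail estimate $\hat f-\hat f^{(\ell)}\le c^s\ell^{1-s}\alpha^s$ is the right computation, and your observation that the exceptional orbit set only needs to be fixed once $\ell$ is fixed (since $\ell=\ell(\vare)$) is correct; your worry that it must be independent of \emph{all} $\ell$ is a red herring you introduce and then don't actually need.

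The one genuine mismatch is your choice of cutoff. You take $\hat f^{(\ell)}=\min(\hat f,\ell)$, which is bounded and continuous but \emph{not} compactly supported on $G/\Gamma$: when $\alpha(\Delta)\to\infty$ the translated lattice $\Delta+\xi$ may avoid the support of $f$ entirely, so $\hat f$ does not tend to $\infty$ along every sequence leaving compacta, and $\{\hat f\le\ell\}$ is not contained in any compact set. Theorems~\ref{bequidistribution1} and~\ref{bequidistribution2} are stated for \emph{compactly supported} $\phi$, so as written your plan invokes them outside their stated hypotheses. The paper sidesteps this by cutting in the cusp variable rather than in the value of $\hat f$: it chooses a bump $g_r$ supported on $A(r)=\{\alpha_1>r\}$, equal to $1$ on $A(r+1)$, and decomposes $\hat f=(\hat f-\hat fg_r)+\hat fg_r$. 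Then $\hat f-\hat fg_r$ vanishes on $A(r+1)$, hence is compactly supported, and the cuspidal piece $\hat fg_r$ is controlled exactly as you control your tail, via $\hat fg_r\le c\,r^{-\beta/2}\alpha^{1+s/2}$ and theorem~\ref{emm1alpha}; moreover the cutoff region $A(r)$ interfaces directly with the uniform non-escape-of-mass estimate $\sigma(A(r))<\delta$ coming from the maximality of $H^0\ltimes\bbr^n$, which is what makes $\big|\int\hat f\,d\mu-\int(\hat f-\hat fg_r)\,d\mu\big|$ small for \emph{every} candidate limit measure $\mu$ at once. Your route can be salvaged either by extending theorem~\ref{bequidistribution2} to bounded continuous test functions (this is in fact true, since its proof rests on theorem~\ref{danimargulis}, which only requires $\phi$ bounded and continuous), or by inserting an additional compactly supported cutoff as the paper does; but as written the step ``apply theorem~\ref{bequidistribution2} to $\hat f^{(\ell)}$'' is not justified by the stated hypotheses, and you should say which fix you are using.
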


\begin{proof}
We may as we will assume that $\phi$ is non negative. Now define
\begin{equation}\label{cusp1}A(r)=\{\Delta\in G/\Gamma\h:\h \alpha_1(\Lambda)>r\}\end{equation}
Let $g_r$ be a continuous function on $G/\Gamma$ such that $g_r(\Delta)=0$ if $\Delta\notin A(r),$ $g_r(\Delta)=1$ for all $\Delta\in A(r+1)$ and $0\leq g_r(\Delta)\leq1$ if $r\leq\alpha_1(\Delta)\leq r+1.$ We have $\hat f=(\hat f-\hat f g_r)+\hat f g_r.$ Note that $\hat f-\hat f g_r$ is a continuous function with compact support on $G/\Gamma$.

Note that $H^0\ltimes\bbr^n$ is a maximal connected subgroup of $G.$ Hence for every $\delta>0$ there exists $r_0$ such that if $H\ltimes\bbr^ny$ is a closed orbit of $H\ltimes\bbr^n$ in $G/\Gamma$ with an $H\ltimes\bbr^n$-invariant probability measure $\sigma$ then $\sigma(A(r)\cap H\ltimes\bbr^ny)<\delta$ for any $r>r_0.$ Hence for $r$ sufficiently large we get
\begin{equation}\label{noscapeofmass}\left|\int_{G/\Gamma}\hat f\hh d\mu-\int_{G/\Gamma}(\hat f-\hat f g_r)d\mu\right|<\vare/3\end{equation}
where $\mu$ is as in the statement of the theorem~\ref{unbddequinot(2,2)}.

Recall that $g_r(y)=0$ if $\alpha_r\leq r.$ Let now $\beta=2-s.$ There exists a constant $B_1$ depending on $f$ such that we have
\begin{equation}\label{ubddcontrol1}\left|\int_K(\hat f g_r)(a_tkx)\hh\nu(k)\hh dk\right|\leq B_1r^{-\frac{\beta}{2}}\int_k\alpha(a_tkx)^{2-\frac{\beta}{2}}|\nu(k)|\hh dk\end{equation}
Hence if we apply theorem~\ref{emm1alpha} then there is a constant $B$ depending on $B_1$
\begin{equation}\label{ubddcontrol2}\int_K(\hat f g_r)(a_tkx)\hh\nu(k)\hh dk\leq B(\sup_{k\in K}|\nu(k)|)r^{-\frac{\beta}{2}}\end{equation}
for all $x\in\mathcal{D}.$


Now choose $r>r_0$ sufficiently large so that $B(\sup_{k\in K}|\nu(k)|)r^{-\frac{\beta}{2}}<\vare/3.$ First note that applying theorem~\ref{bequidistribution2} with the bounded continuous function $\hat f-\hat f g_r$ there are points $y_1,\cdots,y_k$ such that $H\ltimes\bbr^ny_i$'s are closed and have finite $H\ltimes\bbr^n$-invariant measure such that~(\ref{ubddaverage}) holds for any $x\in\mathcal{D}\setminus\bigcup_{i=1}^kH\ltimes\bbr^ny_i$ for $\hat f-\hat f g_r$ instead of $\hat f$ and with $\vare/3$ instead of $\vare.$ Now if we apply theorem~\ref{bequidistribution1} to $H\ltimes\bbr^ny_i$ for all $1\leq i\leq k$ and $\mathcal{D}\cap H\ltimes\bbr^ny_i$ and $\hat f-\hat f g_r.$ Then there exist $x_1\cdots,x_\ell$ such that the conclusion of the theorem holds for $\hat f-\hat f g_r$ and $\vare/3.$ Combining this together with~(\ref{noscapeofmass}) and~(\ref{ubddcontrol2}) and the choice of $r$ we get the theorem.
\end{proof}

\textit{Proof of theorem~\ref{signaturenot(2,2)} and~\ref{uniformnot(2,2)}.} Theorem~\ref{signaturenot(2,2)} is special case of theorem~\ref{uniformnot(2,2)}. The proof of theorem~\ref{uniformnot(2,2)} now goes along the same lines as in~\cite[Section 3.4, 3.5]{EMM1} replacing theorem 3.5 in~\cite{EMM1} by theorem~\ref{unbddequinot(2,2)} above. The proof is based on integrating equation~(\ref{thetatrans}).


\section{The case of signature $(2,2)$.}\label{sec:(2,2)} We now turn to the more interesting case of signature $(2,2).$ The proof is based on the same philosophy however since theorem~\ref{emm1alpha} does not hold for $\alpha_2$ in general the proof is more involved. In fact theorem~\ref{unbddequinot(2,2)} may fail in case of $(2,2)$ in general. It holds however under the Diophantine condition assumed above if we replace $\hat f$ by a modified function $\tilde f.$ After this slight modification the main difficulty is to control the contribution coming from $\alpha_2$ to integrals similar to those considered in theorem~\ref{unbddequinot(2,2)}. In this section we make this reduction and the next section is devoted to the careful study of this contribution. The fact that we consider a slightly different function $\tilde{f}$ is due to existence of exceptional subspaces which have of order of $T^2$ solutions and is a minor point. They may exists even under Diophantine assumption, indeed we will show that if $Q_\xi$ is irrational then there are at most four exceptional subspaces.

Let $Q_\xi$ be an inhomogeneous quadratic form of signature $(2,2)$ with discriminant 1. Recall that the affine subspace $L-v_\xi$ is called exceptional if $L$ is a rational null subspace and $\xi\in L+v_\xi$ for some $v_\xi\in\bbz^4.$ Indeed in this case $L\subset\bbz^4+\xi$ and we will refer to $L$ as an exceptional subspace of $\bbz^4+\xi.$ Let now  $\mathfrak{q}\in\SL_4(\bbr)$ be such that $Q(v)=B(\mathfrak{q}v)$ for all $v\in\bbr^4.$ Let $\Lambda=\mathfrak{q}\bbz^4$ and let $\Lqxi=\mathfrak{q}(\bbz^4+\xi).$ Let $X(\Lqxi)$ be the set of vectors in $\Lqxi$ not contained in $\mathfrak{q}L$ where $L\subset\bbz^4$ for $Q$ defined as above. Define
\begin{equation}\label{mthetatrans}\tilde f(g:\Lqxi)=\sum_{v\in X(\Lqxi)}f(gv)\end{equation}

The following is analogue of~\cite[Theorem 2.3]{EMM2} and will provide us with the upper bound required for the proof of theorem~\ref{(2,2)}. The proof of this theorem is the main technical part of this paper and will occupy the rest of this paper.

\begin{thm}\label{unboundedequi(2,2)}
Let $G, H, K$ and $\{a_t\}$ be as in the section~\ref{sec;inhomo} for the signature $(2,2)$ case. Let $Q_{\xi}$ be a quadratic form of signature $(2,2)$ which is Diophantine. Let $\mathfrak{q}\in\SL_4(\bbr)$ and $\Lqxi$ be as above. Let $\nu$ be a continuous function on $K.$ Then we have
\begin{equation}\label{e:unboundedequi(2,2)}\limsup_{t\rightarrow\infty}\int_K\tilde{f}(a_tk:\Lqxi)\nu(k)dk\leq\int_{G/\Gamma}\hat{f}(g)d\mu(g)\int_K\nu(k)dk\end{equation}
where $\mu$ is the $G$-invariant probability measure on $G/\Gamma$ if the homogenous part, $Q,$ is irrational and the $H\ltimes\bbr^4$-invariant probability measure on the closed orbit $H\ltimes\bbr^4\cdot\Lqxi$ if $Q$ is a rational form.
\end{thm}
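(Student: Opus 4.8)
\textit{Strategy of the proof.} The plan is to adapt the argument of~\cite[Theorem~2.3]{EMM2}. Assume, as we may, that $f\ge 0$ and $\nu\ge 0$. For $r>0$ let $g_r$ be a continuous function of $\alpha$ with $0\le g_r\le1$, equal to $1$ where $\alpha>r+1$ and to $0$ where $\alpha\le r$, so that $\tilde f-\tilde fg_r$ is continuous and compactly supported (by Mahler's criterion) while $\tilde fg_r$ is supported where $\alpha>r$. Write $\tilde f=(\tilde f-\tilde fg_r)+\tilde fg_r$. For the compactly supported part, Theorems~\ref{bequidistribution1} and~\ref{bequidistribution2} give that $\int_K(\tilde f-\tilde fg_r)(a_tk:\Lqxi)\nu(k)\,dk$ tends, as $t\to\infty$, to $\int(\tilde f-\tilde fg_r)\,d\tilde\mu\int_K\nu\,dk$, where $\tilde\mu$ is either the $G$-invariant probability measure or the $H\ltimes\bbr^4$-invariant measure on some closed orbit; since $\tilde f-\tilde fg_r\le\hat f$ and each such $\tilde\mu$ is $\bbr^4$-invariant, Lemma~\ref{siegelformula} gives $\int(\tilde f-\tilde fg_r)\,d\tilde\mu\le\int\hat f\,d\tilde\mu=\int_{\bbr^4}f=\int_{G/\Gamma}\hat f\,d\mu$ for $\mu$ as in the statement. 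Therefore
\[\limsup_{t\to\infty}\int_K\tilde f(a_tk:\Lqxi)\,\nu(k)\,dk\ \le\ \int_{G/\Gamma}\hat f\,d\mu\int_K\nu\,dk\ +\ \limsup_{t\to\infty}\int_K(\tilde fg_r)(a_tk:\Lqxi)\,\nu(k)\,dk,\]
and it remains only to show that the last term tends to $0$ as $r\to\infty$.

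To control $\int_K(\tilde fg_r)(a_tk:\Lqxi)\nu(k)\,dk$ one uses $\hat f(\Delta_\xi)\le c\,\alpha(\Delta)$ together with $\alpha(\Delta_\xi)=\alpha(\Delta)$, and decomposes according to which part of $\alpha=\max_{0\le i\le 4}\alpha_i$ is responsible for $\alpha>r$ on $\mathrm{supp}\,g_r$ (here $\alpha_0,\alpha_4$ are bounded, so only $\alpha_1,\alpha_3$ and the two pieces of $\alpha_2$ can occur --- the supremum over two-dimensional rational subspaces that are \emph{not} null subspaces of $B$, and the supremum over those that \emph{are}, i.e.\ over $\mathfrak qL$ with $L$ a rational null subspace of $Q$). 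For the $\alpha_1$-, $\alpha_3$- and non-null-$\alpha_2$-cusps, the relevant $K$-integrals $\sup_t\int_K(\,\cdot\,)^s(a_tk\Delta)\,dk$ are finite for $0<s<2$ --- the $(2,2)$-analogue of Theorem~\ref{emm1alpha} for $\alpha_1,\alpha_3$, and the estimate coming from the action of $\{a_t\}$ on the Pl\"ucker coordinates of two-planes for the non-null part of $\alpha_2$, both as in~\cite{EMM1,EMM2} --- so exactly as in~(\ref{ubddcontrol1})--(\ref{ubddcontrol2}) their total contribution is $O(r^{-(2-s)/2})=o(1)$ as $r\to\infty$, uniformly in $t$.

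The remaining term --- the genuinely new one --- is the contribution of the vectors of $X(\Lqxi)$ lying close to, but by construction not on, the $\mathfrak q$-images of null subspaces of $B$; bounding it occupies sections~\ref{sec:quasinull} and~\ref{sec;(2,1)}, and is where the Diophantine hypothesis of Definition~\ref{diophantineforms} enters, through its two alternatives. If $Q$ is not EWAS, then no $\bbz^4$-rational plane with null $\mathfrak q$-image approximates a genuine null subspace of $B$ too well, which bounds $d(a_tk\,\mathfrak qL)$ from below for rational $L$; hence $a_tk$ can carry only $O(1)$ vectors of $\Lqxi$ near such a plane into $\mathrm{supp}\,f$ at a time, and averaging over $k$ gives a bound that is $o(1)$ as $r\to\infty$, uniformly in $t$, exactly as in~\cite{EMM2}. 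If $Q$ is EWAS --- the new case, whose extreme instance is $Q$ split rational, and for which $\mu$ is the $H\ltimes\bbr^4$-invariant measure on the closed orbit $H\ltimes\bbr^4\cdot\Lqxi$ --- then $\xi$ is Diophantine by hypothesis, and the affine shift supplies the control: there are now infinitely many ``ghost'' rational null planes coming from the split rational forms $Q'$ that approximate $Q$ extremely well, but the Diophantine condition on $\xi$ keeps $\mathfrak q(\bbz^4+\xi)$ at a distance from any such plane that is polynomially large in the quality of the approximation, so the expanding direction of $a_tk$ cannot line up enough points of $\Lqxi$ near the plane; the contribution is again $o(1)$ as $r\to\infty$, uniformly in $t$. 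Letting first $t\to\infty$ and then $r\to\infty$ in the displayed inequality proves the theorem.

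I expect the main obstacle to be this last step in the EWAS case. In contrast to~\cite{EMM2}, one cannot dispose of it by removing finitely many rational null subspaces: the difficulty is produced by the infinitely many null planes of the split rational forms approximating $Q$, and, when $Q$ is itself split, by their interaction with the translate $\xi$. Making precise the trade-off between the approximation exponents in the EWAS and $\kappa$-Diophantine conditions and the number of points of $\Lqxi$ that $a_tk$ can push into a fixed ball --- while keeping an average over $k$ that still decays in $r$ --- is the technical heart of the paper and takes up sections~\ref{sec:quasinull} and~\ref{sec;(2,1)}. A subsidiary point settled along the way is that if $Q$ or $\xi$ is irrational then $\xi\in L+\bbz^4$ for at most four rational null subspaces $L$, so $X(\Lqxi)$ differs from $\Lqxi$ by at most four rational planes and the passage from $\hat f$ to $\tilde f$ is a lower-order modification.
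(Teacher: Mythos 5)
Your proposal correctly identifies the paper's overall strategy: split $\tilde f$ into a compactly supported part (handled by Theorems~\ref{bequidistribution1} and~\ref{bequidistribution2}), control the $\alpha_1$-, $\alpha_3$- and non-quasinull-$\alpha_2$-cusps by moment estimates (Theorems~\ref{alpha13} and~\ref{notquasinull}), and observe that the genuinely new contribution is from the quasinull part of $\alpha_2$ near $\mathfrak q$-images of null planes, where the Diophantine hypothesis in Definition~\ref{diophantineforms} must be used through its two alternatives. You also correctly flag the role of the four exceptional subspaces in passing from $\hat f$ to $\tilde f$. This is the paper's route.

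However, the technical core is not established, only referred to. Two concrete points. First, a minor one: your claim that the $s$-moments are finite for all $0<s<2$ applies to $\alpha_1,\alpha_3$ (Theorem~\ref{alpha13}), but the non-quasinull-$\alpha_2$ control of Theorem~\ref{notquasinull} gives only a $\delta^{1.04}$ measure bound, so the corresponding moment is finite for $s<1.04$; fortunately any exponent $>1$ suffices for the dyadic sum $\sum_{j>s}2^j|\{\tilde f>c2^j\}|$, so the conclusion stands. Second, and more substantively: your description of the Diophantine mechanism in the EWAS case (\emph{``the Diophantine condition on $\xi$ keeps $\mathfrak q(\bbz^4+\xi)$ at a distance from any such plane''}) suggests a uniform repulsion, which is not what the paper establishes and indeed cannot be true. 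The actual argument (Proposition~\ref{nondivergence}, Lemmas~\ref{smalltrans} and~\ref{onetransestimate}, Proposition~\ref{fewbadsubspaces}, Corollary~\ref{powersaving}) proceeds by a dichotomy for each quasinull $L$: either no single affine translate of $\mathfrak qL$ stays near the origin under $a_t(k_\theta,k_\phi)$ over the interval $I_t^{\pi_2(\qfrak L)}(\delta)$, in which case a $(C,\alpha)$-good function estimate gives a measure bound $\delta^{\eta_1}$ independent of the Diophantine hypothesis; or exactly one translate $L+\lambda_0+\zeta$ stays near, in which case one reads off a Diophantine inequality $\{\langle w_i,\xi\rangle_{Q_T}\}\lesssim\delta^{1-\eta_2}T^{-1/2+\tau_3}$ and the Diophantine condition on $\xi$ is then used to bound the \emph{number} of such subspaces $L$ with norm about $T$ by $O(T^{1-\tau})$. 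Without this dichotomy and the resulting counting estimate, the final summation in Section~\ref{sec;main} --- choose $\mu_1$ so small that quasinull implies null below norm $M$, split $\mathcal Q_t(\delta,\vare)$ by norm, and combine Corollaries~\ref{smallsubspaceset} and~\ref{powersaving} --- cannot be closed. You acknowledge this gap yourself; it is where the proof actually lives.
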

\textit{Proof of theorem~\ref{(2,2)}.} Suppose $Q_\xi$ is as in the statement of theorem~\ref{(2,2)}. An argument like that of~\cite[Section 3.4, 3.5]{EMM1} combined with theorem~\ref{unboundedequi(2,2)} gives: If $0\notin(a,b)$ then
\begin{equation}\label{upperbound}\limsup_{T\rightarrow\infty}{N}_{Q,\xi,\Omega}(a,b,T)=\limsup_{T\rightarrow\infty}\tilde{N}_{Q,\xi,\Omega}(a,b,T)\leq\lambda_{Q,\Omega}(b-a)T^2\end{equation}
This upper bound combined with the lower bound obtained by theorem~\ref{dmlowerbound} proves theorem~\ref{(2,2)}.

The proof of theorem~\ref{unboundedequi(2,2)} will extensively utilize results and ideas from in~\cite{EMM1} and~\cite{EMM2}. We will try to use terminologies and notations used in loc. cit for the convenience of the reader. We recall these theorems and terminologies when we need them. Let us start with the following

\begin{thm}\label{alpha13}
Let $\{a_t\}$ and $K$ be as in theorem~\ref{unboundedequi(2,2)}. Let $\Lambda$ be any lattice in $\bbr^4$ then for $i=1,3$ and any $\vare>0$
\begin{equation}~\label{e:alpha13}\sup_{t>0}\int_K\alpha_i(a_tk\Lambda)^{2-\vare}dk<\infty\end{equation}
Hence there exists a constant $c$ depending on $\vare$ and $\Lambda$ such that for all $t>0$ and $0<\delta<1$
\begin{equation}\label{measureestimate13}|\{k\in K\h:\h \alpha_i(a_tk\Lambda)>\frac{1}{\delta}\}|<c\delta^{2-\vare}\end{equation}
\end{thm}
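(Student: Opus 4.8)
The plan is to reduce the statement to a known result about the homogeneous space of unimodular lattices in $\bbr^4$, exploiting the $\SL_2(\bbr)\times\SL_2(\bbr)$ structure of $H$ and the duality $\alpha_1\leftrightarrow\alpha_3$. First I would observe that for $i=3$ one can pass to the dual lattice: for a unimodular lattice $\Lambda$ one has $\alpha_3(\Lambda)=\alpha_1(\Lambda^*)$, where $\Lambda^*$ is the dual, because a $3$-dimensional $\Lambda$-rational subspace $L$ corresponds to its annihilator, a $1$-dimensional $\Lambda^*$-rational line, and the covolumes satisfy $d(L)=d(L^\perp)$. Since $K$ and $\{a_t\}$ act on $\Lambda^*$ by the (inverse-transpose) images of $k$ and $a_t$, which again lie in a conjugate of $K$ and in a one-parameter subgroup of the same type, the $i=3$ case of~(\ref{e:alpha13}) follows from the $i=1$ case applied to $\Lambda^*$. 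So it suffices to prove the $i=1$ bound: $\sup_{t>0}\int_K\alpha_1(a_tk\Lambda)^{2-\vare}\,dk<\infty$, uniformly for $\Lambda$ in compact sets.

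For the $i=1$ case I would invoke the general moment estimate for the $\alpha_i$ functions along expanding translates that is the technical engine of~\cite{EMM1}; indeed Theorem~\ref{emm1alpha} above is exactly such a statement, but for signature $(p,q)$ with $p\geq 3$. The point for signature $(2,2)$ is that the failure of the integrability bound occurs only at the middle dimension $i=2$ (the one responsible for EWAS phenomena), while for the extreme dimensions $i=1$ (and dually $i=3$) the relevant representation-theoretic exponents still give decay with any exponent $s<2$. Concretely, $\alpha_1(a_tk\Lambda)$ is the reciprocal of the length of the shortest vector of $a_tk\Lambda$, and $\int_K\|a_tk v\|^{-s}\,dk$ for a fixed nonzero $v\in\bbr^4$ is, up to a bounded factor, $\int_{\mbox{SO}(2)\times\mbox{SO}(2)}\|b_t k' v\|^{-s}\,dk'$; since $b_t=\diag(e^{-t/2},e^{t/2})$ acts on each $\bbr^2$-factor as a diagonal expansion/contraction, a direct computation shows this integral is bounded uniformly in $t$ for $s<2$ (the singularity as $k'v$ approaches the contracting axis is integrable precisely when $s<2$ on each $2$-dimensional factor, but here the two factors conspire so that $s<2$ overall suffices). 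One then upgrades this pointwise-in-$v$ estimate to the supremum over $\Lambda$-rational lines by the standard argument: split the sum over primitive vectors of $\Lambda$ according to dyadic size, use that $a_tk$ is unimodular together with a Minkowski-type counting bound for the number of lattice points of bounded norm, and sum the resulting geometric-type series. This is precisely the scheme of~\cite[\S3]{EMM1} and~\cite{EMM2}, and the uniformity over compact sets of $\Lambda$'s is automatic because all estimates depend only on the covolume (normalized to $1$) and on a lower bound for the systole, which is bounded below on compacta.

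Finally, the measure estimate~(\ref{measureestimate13}) is a formal consequence of~(\ref{e:alpha13}) by Chebyshev's inequality: if $M:=\sup_{t>0}\int_K\alpha_i(a_tk\Lambda)^{2-\vare}\,dk<\infty$, then for every $t>0$ and $0<\delta<1$,
\[
\bigl|\{k\in K:\alpha_i(a_tk\Lambda)>1/\delta\}\bigr|
\leq \delta^{2-\vare}\int_K\alpha_i(a_tk\Lambda)^{2-\vare}\,dk
\leq M\,\delta^{2-\vare},
\]
so one takes $c=M$. I expect the main obstacle to be the careful bookkeeping in the dyadic decomposition step for $i=1$ — verifying that the exponent $2-\vare$ (rather than some smaller number) is genuinely admissible for the extreme dimensions in signature $(2,2)$, and that nothing is lost in passing from a single vector $v$ to the supremum over all rational lines. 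The dualization for $i=3$ and the Chebyshev step are routine; the real content is the uniform $K$-integral bound, which is where the signature-$(2,2)$ geometry (two $\SL_2$ factors, each contributing a borderline-integrable singularity) must be handled with some care.
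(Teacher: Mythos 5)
Your proposal matches the paper's proof in essence: the paper simply cites \cite[Section 5]{EMM1} for the moment bound~(\ref{e:alpha13}) at the extreme dimensions $i=1,3$ (which is exactly the technical input you invoke, and you correctly identify that the obstruction in signature $(2,2)$ lives only at $i=2$), and then applies Chebyshev's inequality for~(\ref{measureestimate13}). Your dualization of $i=3$ to $i=1$ via $\alpha_3(\Lambda)=\alpha_1(\Lambda^*)$ is a correct and mildly streamlining reduction that the paper does not bother to spell out since the cited reference handles both dimensions directly.
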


\begin{proof}
The first assertion is proved in~\cite[section 5]{EMM1}. The second assertion is a consequence of the first assertion and Chebechev's inequality.
\end{proof}

Such statement for $\alpha_2$ however, does not hold in general. Hence in order to control the integral on the left hand side of~(\ref{e:unboundedequi(2,2)}) we need to study the contribution coming from $\alpha_2$ to this integral under the imposed Diophantine condition. Let us recall the following standard

\begin{lem}\label{linearalgebra}{\rm{(\cite[Lemmas 2.1, 10.2]{EMM2})}}
Let $Q$ be a homogeneous quadratic form of signature $(2,2)$. Then $\bigwedge^2\bbr^4$ decomposes into two ${\rm{SO}}(Q)$-invariant subspaces $V_1$ and $V_2.$ Let $\pi_i$ denote the projection $\bigwedge^2\bbr^4\rightarrow V_i.$

\begin{itemize}
\item[(i)] The spaces $V_1$ and $V_2$ are orthogonal with respect to the bilinear form $Q^{(6)}(v,w)=v\wedge w$ on $\bigwedge^2\bbr^4.$ The restriction of $Q^{(6)}$ to $V_i$ has signature $(2,1).$
\item[(ii)] The pair $(V_1,V_2)$ determines $Q$ up to proportionality and the map $f$ which takes $(V_1,V_2)$ to $Q/$proportionality is a rational map defined over $\bbq.$
\item[(iii)] If $V_1$ is rational and the restriction of $Q^{(6)}$ to $V_1$ splits over $\bbq$ then $f(V_1,V_1^{\perp})$ is a split form over $\bbq.$
\item[(iv)] Let $L$ be a two dimensional subspace of $\bbr^4$ and let $v_1,v_2$ be a basis for $L.$ Then the restriction of $Q$ to $L$ is identically zero if and only if $\pi_1(v_1\wedge v_2)=0$ or $\pi_2(v_1\wedge v_2)=0.$
\end{itemize}

\end{lem}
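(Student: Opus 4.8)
The plan is to prove Lemma~\ref{linearalgebra} by reducing the general signature-$(2,2)$ form to the standard split form $B(x)=x_1x_4-x_2x_3$ of~(\ref{standard(2,2)}), for which all four assertions can be verified by explicit computation, and then transporting the result back by equivariance. First I would fix an identification $\bbr^4\cong\mathrm{M}_2(\bbr)$ under which $B$ becomes the determinant, so that $\mathrm{SO}(B)$ is, up to finite index, $\SL_2(\bbr)\times\SL_2(\bbr)$ acting by $v\mapsto g_1vg_2^{-1}$. Under this picture $\bigwedge^2\bbr^4\cong\bbr^6$ carries the representation $\bigwedge^2(\bbr^2\boxtimes\bbr^2)\cong(\mathrm{Sym}^2\bbr^2\boxtimes\bigwedge^2\bbr^2)\oplus(\bigwedge^2\bbr^2\boxtimes\mathrm{Sym}^2\bbr^2)$, which exhibits the canonical decomposition into two three-dimensional subspaces $V_1,V_2$, each isomorphic as an $\SL_2\times\SL_2$-module to the adjoint representation of one $\SL_2$ factor (the other acting trivially). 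This makes the $\mathrm{SO}(Q)$-invariance in the opening sentence manifest.

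Next I would establish (i). The wedge pairing $Q^{(6)}(v,w)=v\wedge w\in\bigwedge^4\bbr^4\cong\bbr$ is a nondegenerate symmetric bilinear form of signature $(3,3)$ on $\bbr^6$; orthogonality of $V_1$ and $V_2$ follows because $V_1\wedge V_2$ lands in the ``mixed'' isotypic component, which contributes zero after projecting to the top exterior power, while $V_1\wedge V_1$ and $V_2\wedge V_2$ survive. Restricting $Q^{(6)}$ to either $V_i$ one gets, in the adjoint-representation model, a nonzero $\SL_2$-invariant quadratic form, hence a multiple of the Killing form, which has signature $(2,1)$. For a general $Q$ of signature $(2,2)$ one conjugates the standard computation by an element $g$ with $Q(v)=B(gv)$, using that $\bigwedge^2 g$ scales $Q^{(6)}$ by $\det g$, so signatures are preserved. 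For (iv), in the standard model a decomposable bivector $v_1\wedge v_2$ corresponds to the line $L=\langle v_1,v_2\rangle$, and $Q|_L\equiv0$ exactly when $L$ is spanned by two matrices of rank $\leq 1$ lying in a common left- or right-ideal of $\mathrm{M}_2(\bbr)$; such planes are precisely those with $\pi_1(v_1\wedge v_2)=0$ (the ``column'' null planes, i.e.\ $\{A:\ \mathrm{column\ space}(A)\subseteq\ell\}$) or $\pi_2(v_1\wedge v_2)=0$ (the ``row'' null planes). The equivalence is a short rank/determinant computation using that $\pi_i$ kills decomposable bivectors lying in the null family associated to the opposite factor.

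For (ii) and (iii) I would argue rationality of the relevant maps. Reconstructing $Q$ from $(V_1,V_2)$: since $V_1,V_2$ are $\mathrm{SO}(Q)$-invariant and the stabilizer in $\mathrm{GL}(\bigwedge^2\bbr^4)$ of a generic such pair is, up to scalars, $\mathrm{SO}(Q)$ itself, the pair determines $Q$ up to proportionality; concretely, $Q$ can be recovered, e.g., as (a scalar multiple of) the form whose isotropic $2$-planes are the $L$ with $\pi_1(v_1\wedge v_2)=0$, and writing this out expresses the coefficients of $Q$ as polynomials with rational (indeed integer) coefficients in a basis of $V_1$, hence $f$ is $\bbq$-rational. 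For (iii), if $V_1$ is rational and $Q^{(6)}|_{V_1}$ splits over $\bbq$, then $V_1$ as a quadratic space is isomorphic over $\bbq$ to the split $(2,1)$ form; running the reconstruction of $f(V_1,V_1^\perp)$ through this $\bbq$-splitting produces a form defined over $\bbq$ which, in the standard model, is visibly the split determinant form, so $f(V_1,V_1^\perp)$ is split over $\bbq$.

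The main obstacle I anticipate is not any single computation but pinning down the precise normalizations so that the equivariance argument is airtight: one must check that conjugating by $g\in\SL_4(\bbr)$ with $Q=B\circ g$ carries the standard $V_1,V_2$ to the $V_1,V_2$ attached to $Q$ (rather than swapping them or rescaling them in a way that spoils (iii)), and that the rational map $f$ is well-defined on the GIT quotient parametrizing unordered or ordered pairs $(V_1,V_2)$. Since the statement is quoted from \cite[Lemmas 2.1, 10.2]{EMM2}, I would in practice simply cite that reference and include only the brief indication above of why the reduction to the standard split form works; the verification in the standard model is elementary linear algebra in $\mathrm{M}_2(\bbr)$.
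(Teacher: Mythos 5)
The paper gives no internal proof of this lemma; it is simply imported from \cite[Lemmas 2.1, 10.2]{EMM2}, and your concluding remark that in practice one would cite that reference is exactly what the paper does. Your sketch of how a direct proof would go is correct and is essentially the standard argument: reduce to the split form $B=\det$ on $\mathrm{M}_2(\bbr)$, use the $\SL_2\times\SL_2$--decomposition $\bigwedge^2(\bbr^2\otimes\bbr^2)\cong(\mathrm{Sym}^2\bbr^2\otimes\bigwedge^2\bbr^2)\oplus(\bigwedge^2\bbr^2\otimes\mathrm{Sym}^2\bbr^2)$ to produce $V_1,V_2$, get orthogonality and the $(2,1)$ signatures from Schur plus the fact that the only invariant form on $\mathrm{Sym}^2\bbr^2$ is a multiple of the Killing form, identify null $2$-planes of $\det$ with the row/column ideals of $\mathrm{M}_2(\bbr)$ for (iv), and observe that the whole construction is $\bbq$-rational and compatible with conjugation by a $g$ with $Q=B\circ g$ (using $\bigwedge^4 g=\det g$) for (ii) and (iii). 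So there is no gap and no divergence from the paper to report.
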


Let $\Lambda$ be a lattice in $\bbr^4.$ If $L$ is a $2$-dimensional $\Lambda$-rational subspace of $\bbr^4$ we let $v_1,v_2$ be an integral basis for $L\cap\Lambda.$ We let $v^{L}=v_1\wedge v_2.$ We will refer to $\bbz^4$-rational subspaces as rational subspaces.

\begin{definition}\label{quasinull}
Let $Q$ be a quadratic form of signature $(2,2).$ Fix $0<\mu_1<1.$ A $2$-dimensional rational subspace $L$ of $\bbr^4$ is called $\mu_1$-{\it quasinull with respect to} $Q$ if
\begin{equation}\label{eq:quasinull}\|\pi_1(v^L)\|\|\pi_2(v^L)\|<\mu_1\end{equation}
where $\pi_i$'s are the projections corresponding to $Q$ defined in lemma~\ref{linearalgebra}.
\end{definition}
If $L$ is a quasinull subspace and $T/2\leq\|v^L\|\leq T$ then either $\|\pi_2(v^L)\|<C/T$ or $\|\pi_1(v^L)\|<C/T,$ we call $L$ quasinull subspace of the first respectively the second type. In particular null subspaces are quasinull and similar terminology will be used for null subspaces. This indeed depends on the ordered pair $(V_1,V_2).$ We assume this ordering is fixed once and for all.
In the particular case of $Q=B$ we fix the ordering so that the space spanned by $\{x_{11},x_{12}\}$ is of first kind.

The following is a technically involved theorem which is proved in~\cite{EMM2}. It controls the contribution to $\alpha_2$ coming from non-quasinull subspaces.

\begin{thm}\label{notquasinull}{\rm{(\cite[Theorem 2.6]{EMM2})}}
Let $\mu_1>0$ be fixed and let $\mathcal{L}_t(\delta)$ be the set of all non-$\mu_1$-quasinull subspaces $L$ such that for some $k\in K,$ $d(a_tkL)<\delta$. Then there exists $\delta_0=\delta_0(\mu_1,Q)$ such that for all $0<\delta<\delta_0$ and all $t>0$
\begin{equation}\label{e:notquasinull}|\bigcup_{L\in\mathcal{L}_t(\delta)}\{k\in K\h:\h d(a_tkL)<\delta\}|<\delta^{1.04}\end{equation}
\end{thm}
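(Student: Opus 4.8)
The statement to be proved is exactly Theorem 2.6 of \cite{EMM2}, so strictly speaking the argument is \emph{imported} rather than reinvented; nevertheless here is how I would organize it. The goal is to bound the measure of the set of $k\in K$ for which there exists a non-$\mu_1$-quasinull rational $2$-plane $L$ with $d(a_tkL)<\delta$, uniformly in $t$, by $\delta^{1.04}$. The first step is the standard reduction: a $2$-plane $L$ is recorded by its Pl\"ucker vector $v^L=v_1\wedge v_2\in\bigwedge^2\bbz^4$, and $d(a_tkL)^{-1}=\|a_tk\cdot v^L\|$ (up to the bounded distortion from the flat metric versus the covolume), where $a_t$ acts on $\bigwedge^2\bbr^4$ via the second exterior power of its defining representation. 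Using the $\mathrm{SO}(Q)$-invariant splitting $\bigwedge^2\bbr^4=V_1\oplus V_2$ from Lemma \ref{linearalgebra}, with $\pi_i$ the projections, write $w_i=\pi_i(v^L)$, so $\|a_tkv^L\|^2\asymp\|a_tkw_1\|^2+\|a_tkw_2\|^2$. Thus $d(a_tkL)<\delta$ forces \emph{both} $\|a_tkw_1\|<\delta'$ and $\|a_tkw_2\|<\delta'$ for $\delta'\asymp\delta$. The non-quasinull hypothesis means $\|w_1\|\,\|w_2\|\geq\mu_1$, so the two vectors $w_1,w_2$ are each of ``not-too-small'' norm in at least one factor; combined with the signature-$(2,1)$ restriction of $Q^{(6)}$ to each $V_i$ (Lemma \ref{linearalgebra}(i)), each $w_i$ lies on a level set of an indefinite ternary form, and $a_t$ acts on $V_i\cong\bbr^3$ essentially as the diagonal flow $\mathrm{diag}(e^{-t},1,e^{t})$ of $\mathrm{SO}(2,1)$.

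The second and central step is therefore an $(2,1)$-type counting/measure estimate, carried out \emph{in each factor $V_i$ separately and then combined}. Fix $t$ and a dyadic range $T/2\le\|v^L\|\le T$. One needs: for the vectors $w\in V_i\cap(\text{lattice})$ of norm $\le T$ and $Q^{(6)}(w)$ bounded (the level sets that can arise), the measure of $\{k\in K:\|a_tkw\|<\delta\}$ is $\lesssim\delta^{2}/\text{(something)}$ for a single $w$ — this is the ternary analogue of the estimate behind Theorem \ref{alpha13} — and then one sums over the finitely many relevant $w$ in the annulus, controlling the number of lattice points $w$ with $\|w\|\le T$ lying on a given level set of the ternary form by $O(T^{1+o(1)})$ (or by the geometry-of-numbers count on a hyperboloid). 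The arithmetic input that makes the exponent come out better than the trivial $1$ is precisely that $L$ is non-quasinull: this keeps $\min(\|w_1\|,\|w_2\|)\gtrsim\mu_1/T$, so the ``bad'' direction in the $V_i$ that $w_i$ realizes is genuinely of size comparable to $T$, and one gets to use the \emph{product} of the two one-factor estimates rather than just one of them. Carefully optimizing the split of the loss between the two factors and summing the resulting geometric series over dyadic $T$ yields the exponent $1.04$ (any exponent strictly between $1$ and $1.5$ would do for the later application; $1.04$ is what the bookkeeping in \cite{EMM2} produces, with room to spare).

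The third step is assembling the pieces: sum the per-scale bound over dyadic $T\le e^{Ct}$ (larger $T$ contribute nothing because $d(a_tkL)<\delta<\delta_0$ already forces $\|v^L\|$ not too large, as $\|a_tk\|$ is of size $e^{Ct}$), and check that the implied constants depend only on $\mu_1$ and $Q$ (via the distortion of $\mathfrak q$ and the norms on $V_1,V_2$), not on $t$ or $\delta$; this fixes $\delta_0=\delta_0(\mu_1,Q)$. The main obstacle is unquestionably the second step: getting the sub-quadratic-in-$\delta$, better-than-linear-in-$T$ estimate uniformly in $t$ requires the delicate hyperboloid lattice-point count on $\mathrm{SO}(2,1)$-orbits together with the oscillatory/geometric estimate for $|\{k:\|a_tkw\|<\delta\}|$, and keeping the dependence on the position of $w$ inside the annulus explicit enough to sum — this is exactly the technical heart of \cite[\S 2 and \S 6-7]{EMM2}, and I would not attempt to reprove it from scratch here but rather cite it, noting only that the inhomogeneous shift $\xi$ plays no role in this statement since it concerns $\Lambda=\mathfrak q\bbz^4$ and not $\Lambda_{\mathfrak q\xi}$.
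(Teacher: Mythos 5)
The paper does not prove this statement; it is quoted verbatim from \cite[Theorem 2.6]{EMM2} and used as a black box, so there is no ``paper proof'' to check your proposal against, and citing the result as you do is exactly what the authors themselves do. Your sketch does correctly capture the geometric setup that underlies the EMM2 argument: passage to Pl\"ucker vectors $v^L\in\bigwedge^2\bbz^4$, the $\mathrm{SO}(Q)$-invariant splitting $\bigwedge^2\bbr^4=V_1\oplus V_2$ with $Q^{(6)}|_{V_i}$ of signature $(2,1)$, and the observation that the non-quasinull condition $\|\pi_1(v^L)\|\,\|\pi_2(v^L)\|\ge\mu_1$ forces both projections to be non-negligible so that one can exploit the product structure of $K=\mathrm{SO}(2)\times\mathrm{SO}(2)$.

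One caveat on the mechanism: you frame the key estimate as a per-vector measure bound $|\{k:\|a_tkw\|<\delta\}|$ multiplied by a lattice-point count on hyperboloids $\{Q^{(6)}(w)=\mathrm{const}\}$. The actual proof in \cite{EMM2} is organized instead around the ``system of inequalities'' machinery of \cite{EMM1}: one introduces a cutoff variant of $\alpha_2$ (or of the Schmidt-type function on the lattice $\bigwedge^2\Lambda\subset\bbr^6$) that excludes quasinull planes, proves a uniform-in-$t$ moment bound $\sup_t\int_K(\cdot)^{2+\epsilon}dk<\infty$ for it, and then applies Chebyshev; the $1.04$ is simply what a safe margin in the exponent $2+\epsilon$ delivers. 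Your per-vector-plus-counting formulation is morally equivalent but is not literally how the sum over $L$ is controlled there, and if one tried to carry it out directly one would have to be careful: the individual measure $|\{k:\|a_tkw\|<\delta\}|$ is not uniformly $O(\delta^2)$ for a single $w$ (it can be as large as $O(\delta)$ for $w$ close to the light cone of $Q^{(6)}|_{V_i}$), so the gain has to come from the interplay between the two factors and the summation, exactly as the $\alpha$-moment approach packages automatically. You also correctly note that the shift $\xi$ is irrelevant here; the statement is purely about the homogeneous lattice $\mathfrak q\bbz^4$.
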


Theorems~\ref{alpha13} and~\ref{notquasinull} reduce the proof of theorem~\ref{unboundedequi(2,2)} to the study of quasinull subspaces. We finish this section by making this reduction explicit and also fixing some notations to be used in the next section.

We first remark that in the proof of theorem~\ref{unbddequinot(2,2)} we need to chose $\mu_1$ carefully, see section~\ref{sec;main} for details. However until that section we will let $0<\mu_1<1$ be any small constant which is fixed through out and hence we will drop that from the notation. Let $\Lqxi$ be as in the statement of theorem~\ref{unboundedequi(2,2)}. For $\Lambda$ any lattice in $\bbr^4$ we let $\alpha_{13}(\Lambda)=\max\{\alpha_1(\Lambda),\alpha_3(\Lambda)\}.$ There are constants $c$ and $r$ depending on $f$ and $\xi$ only, such that for all $0<\delta\ll1$ we have
\begin{equation}\label{thinsets}\{k\in K\h:\h \tilde{f}(a_tk\Lqxi)>\frac{c}{\delta}\}\subset\{k\in K\h:\h \alpha_{13}(a_tk\Lqxi)>\frac{1}{\delta}\}\hh\cup \bfrak_t(\delta)\cup\afrak_t({\delta})\end{equation}
where $\bfrak_t(\delta)=\bigcup_{L\in\mathcal{L}_t(\delta)}\{k\in K\h:\h d(a_tkL)<\delta\}.$ The set $\afrak_t({\delta})$ corresponds to quasinull subspaces and will be described below. Recall first that $Q_\xi$ is an irrational form hence it has at most four exceptional subspaces. That is there are at most four null subspaces $L_i$ for $1\leq i\leq4$ such that if $\xi\in L+v$ for some $v\in\bbq^4$ then $L=L_i$ for some $1\leq i\leq4.$ Let $\mathcal{Q}=\mathcal{Q}(Q_\xi)$ be the set of quasinull subspaces $L$ such that $L\neq L_i$ for $1\leq i\leq4.$ If $B(r)$ denotes the ball of radius $r$ in $\bbr^4$ let
\begin{equation}\label{thinsetnull}\afrak_t({\delta})=\bigcup_{L\in\mathcal{Q}}\{k\in K\h:\h d(a_tkL)<\delta\h\&\h\exists\hh v\in\bbz^4\h\hh\mbox{s. t.}\h a_tk(L+v+\xi)\cap B(r)\neq\emptyset\}\end{equation}
Indeed an estimate like~(\ref{thinsets}), actually for $\hat f$ would hold, by the virtue of~\cite[Lemma 2]{Sc} if $\afrak$ was defined by taking union over all quasinull subspaces. But we have replaced $\hat f$ by $\tilde f$ and this implies we can take the union over $\mathcal{Q}$ instead. To see this consider one of these subspaces e.g. $L_1.$ Assume $\xi\in L_1+w_{1\xi}$ where $w_{1\xi}\in\bbz^4.$ Now if there is $k\in K$ such that~(\ref{thinsetnull}) is satisfied with $L=L_1$ and $v\in\bbz^4.$ Then there is a constant $c_{\xi}\geq1$ depending on $\xi$ such hat $d(a_tkH)<c_\xi\delta$ where $H=\mbox{span}\langle L_1,v+w_{1\xi}\rangle.$ Now either $L_1=L_1+v+\xi$ or $H$ is a 3-dimensional subspace. We have excluded the points in $L_1$ in the definition of $\tilde{f}$ and if the later occurs then $k\in\{k\in K\h:\h \alpha_3(a_tk\Lqxi)>1/c_\xi\delta\}.$ Hence there is $c$ such that (\ref{thinsets}) holds.

Now the proof of theorem~\ref{unboundedequi(2,2)} will be completed if we can show that there is some $\eta>0$ depending on $Q_\xi$ such that $|\afrak_t^L(\delta)|<\delta^\eta,$ for all small $\delta.$ Such bounds in general do not hold. In the next section we will prove such bound for ``most'' quasinull subspaces $L$ under the Diophantine condition.


\section{Contribution from quasinull subspaces}\label{sec:quasinull}
The reductions made in the previous section lead us to the study of quasinull subspaces. We need some further investigations before we can take advantage of the Diophantine condition.

Until further notice we work with the standard form and the lattice $\Lambda=\qfrak\bbz^4.$ We will use the projections $\pi_i$ introduced in lemma~\ref{linearalgebra}. While working with $\Lambda$ these are projections from $\bigwedge^2\bbr^4$ onto $V_i$ where $V_i$'s are $\mbox{SO}(B)$ invariant. Similarly by a quasinull subspace we mean a $\Lambda$ rational $2$-dimensional subspace for which~(\ref{quasinull}) holds with respect to these projections and some $\mu_1<1$.

We start by recalling some notations. Let $\mathcal{Q}=\mathcal{Q}_\xi$ denote the set of quasinull subspaces which are not exceptional subspaces. For $L\in\mathcal{Q}$ we let $\afrak_t^L({\delta})$ be the corresponding set defined in~(\ref{thinsetnull}). Let us recall from~\cite{EMM2} that the subset in the compact group $K$ where the subspace $L$ becomes ``thin" can be approximated by union of at most four rectangles. We recall the precise statement which is tailored for our purpose in here.

\begin{lem}\label{intervals}
Let $L$ be a $\mu$-quasinull subspace of first type.  Let $0<\delta<1$ and $t>0.$ Then there exists $0<c<1$ such that
\begin{equation}\label{e:intervals}R_t^{L,\pm,\pm}(c\delta,c\delta)\subset\{k\in K\h:\h d(a_tkL)<\delta\}\subset \bigcup_{\pm}R_t^{L,\pm,\pm}(c^{-1}\delta,c^{-1}\delta)\end{equation}
where $R_t^L=\itlone\times\itltwo$ are boxes with the following properties
\begin{itemize}
\item[(i)] the intervals $\itli$ have the same center for all $t.$
\item[(ii)] the length $|\itlone|\approx e^{-t}m_t(L)^{-1/2}\delta^{1/2},$ where $m_t(L)=\min_{k\in K}d(a_tkL).$
\item[(iii)] For $0<\eta\ll 1$ either $|\itltwo|\leq\delta^\eta$ or $\|\pi_2(v^L)\|\leq\frac{\delta^{1-2\eta}}{e^t}.$
\end{itemize}
Furthermore if $L$ is a null subspace (of first type) then
\begin{equation}\label{e:intervalsnull}\{k\in K\h:\h d(a_tkL)<\delta\}\subset \itlone\times{\rm{SO}}(2)\end{equation}
where $\itlone$ is an interval with above properties.
\end{lem}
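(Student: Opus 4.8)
The plan is to reduce everything to a concrete coordinate computation on $K=\mathrm{SO}(2)\times\mathrm{SO}(2)$ using the two $\mathrm{SO}(B)$-invariant summands $V_1,V_2$ of $\bigwedge^2\bbr^4$. First I would fix an $L$ that is $\mu$-quasinull of the first type, so that $\|\pi_1(v^L)\|\|\pi_2(v^L)\|<\mu$ with $\|\pi_2(v^L)\|$ small, and recall from Lemma~\ref{linearalgebra}(i) that $Q^{(6)}$ restricted to each $V_i$ has signature $(2,1)$. The key point is that the function $k\mapsto d(a_tkL)$ is governed by the covolume of $a_tk(L\cap\Lambda)$ in $a_tkL$, which (up to bounded multiplicative constants coming from the shape of $L\cap\Lambda$) equals $\|a_tk\,v^L\|$ in $\bigwedge^2\bbr^4$. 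Because the exterior-square action preserves $V_1$ and $V_2$, we get $\|a_tk\,v^L\|^2 = \|a_tk\,\pi_1(v^L)\|^2 + \|a_tk\,\pi_2(v^L)\|^2$, and the two terms can be analyzed independently. Parametrize $K$ by $(\theta_1,\theta_2)$ and compute the induced action of $a_t k_{(\theta_1,\theta_2)}$ on each $V_i\cong\bbr^{2,1}$: in suitable coordinates on $V_i$ adapted to the signature, $a_t$ acts as $\mathrm{diag}(e^{-t},1,e^{t})$ (or its analogue) and the $\mathrm{SO}(2)$ factor acts by a rotation, so $\|a_tk\,\pi_i(v^L)\|^2$ is, as a function of $\theta_i$, a trigonometric polynomial of degree $\le 2$ whose coefficients are explicit quadratic expressions in the coordinates of $\pi_i(v^L)$, with the dominant coefficient carrying a factor $e^{2t}$.

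Next I would extract the sublevel set. For the $V_1$-coordinate: $\{\theta_1 : \|a_tk\,\pi_1(v^L)\| < \delta\}$ is where a degree-$\le2$ trigonometric polynomial with leading term of size $\sim e^{2t}\|\pi_1(v^L)\|^2$ (note $\|\pi_1(v^L)\|\asymp \|v^L\|\asymp 1$ here up to the chosen normalization $T/2\le\|v^L\|\le T$ — more precisely it is the minimum over $k$ of this quantity, $m_t(L)$, that enters) drops below $\delta^2$; standard estimates for sublevel sets of polynomials give that this is a union of $O(1)$ intervals, each of length $\asymp e^{-t} m_t(L)^{-1/2}\delta^{1/2}$. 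This yields $\itlone$ and its length estimate in (ii); the statement that the center is $t$-independent follows because conjugating by $a_t$ does not move the rotation angle that minimizes $\|a_tk\,\pi_1(v^L)\|$ — it is determined by the isotropic direction of $\pi_1(v^L)$ in $V_1$, which is fixed. For the $V_2$-coordinate one argues the same way but using $\|\pi_2(v^L)\|$ small: if $\|\pi_2(v^L)\| > \delta^{1-2\eta}/e^t$ then the leading coefficient $\sim e^{2t}\|\pi_2(v^L)\|^2$ already exceeds $\delta^{2-4\eta}$, so the sublevel set $\{\|a_tk\,\pi_2(v^L)\|<\delta\}$ has measure $\lesssim\delta^{\eta}$ by the same sublevel-set estimate — giving the dichotomy (iii). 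Taking a product of the two one-variable sublevel sets and absorbing the distortion between $d(a_tkL)$ and $\max_i\|a_tk\,\pi_i(v^L)\|$ into the constant $c$ gives the two-sided inclusion~(\ref{e:intervals}), with the four sign choices accounting for the two isotropic directions in each $V_i$ (equivalently the fact that the minimizing angle is only well-defined mod $\pi$, and $\mathrm{SO}(2)$ is a double cover of the relevant circle). Finally, for the null-subspace addendum: if $L$ is genuinely null then by Lemma~\ref{linearalgebra}(iv) one of $\pi_1(v^L),\pi_2(v^L)$ is exactly $0$, say $\pi_2(v^L)=0$; then $\|a_tk\,v^L\| = \|a_tk\,\pi_1(v^L)\|$ depends only on $\theta_1$, so the sublevel set is all of $\{\theta_2\}$, i.e.\ $\itlone\times\mathrm{SO}(2)$, proving~(\ref{e:intervalsnull}).

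The main obstacle I expect is bookkeeping the constants and the exact form of the trigonometric polynomials: one must pin down the coordinates on $V_i$ in which $a_t$ and $K$ act simultaneously by a diagonal and a rotation respectively (this uses that $a_t\in H$ and that $K$ is its maximal compact, so the relevant $\mathrm{SL}_2(\bbr)$ factor acts on $V_i\cong\bbr^{2,1}$ as the adjoint/symmetric-square representation), and then verify that ``$d(a_tkL)<\delta$'' really is comparable to ``$\|a_tk v^L\| < \delta'$'' uniformly — this comparability can fail to be uniform if $L\cap\Lambda$ is a very skew lattice in $L$, but since $L$ ranges over $\Lambda$-rational subspaces with $\|v^L\|$ in a dyadic range, a reduction-theory argument (Minkowski) bounds the skewness and hence makes the comparison uniform. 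A secondary subtlety is ensuring the $t$-independence of the centers genuinely holds and not merely up to $O(e^{-t})$ — this is where one must observe that the minimizing angle is a property of the isotropic cone in $V_i$, which $a_t$ stabilizes setwise, not just the vector $\pi_i(v^L)$. Once these normalizations are in place, everything else is the routine sublevel-set estimate for low-degree trigonometric polynomials, which I would not write out in detail.
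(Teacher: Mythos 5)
Your proposal reconstructs the content of the cited reference rather than following the paper's own proof, which for parts (i) and (ii) consists entirely of citing Lemma A.6 of \cite{EMM2}; only the proofs of (iii) and the null addendum are given in the paper, and for those your argument (adjoint action of $\mathrm{SO}(2)$ on $V_2$ and the $e^t$-expansion in the $e_2\wedge e_4$ direction; $\pi_2(v^L)=0$ for null $L$ via Lemma~\ref{linearalgebra}(iv)) coincides with what the paper writes. The general strategy you use --- split $\|a_tkv^L\|^2 = \|a_tk\pi_1 v^L\|^2 + \|a_tk\pi_2 v^L\|^2$ using the $H$-invariant (and in fact $K$-orthogonal) decomposition, observe that each factor is a trigonometric polynomial of bounded degree in one of the two $\mathrm{SO}(2)$-angles, and estimate sublevel sets --- is indeed the right one and is what Lemma A.6 of \cite{EMM2} does.

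There are, however, two imprecisions that would have to be repaired to actually derive (ii). First, you say there are ``suitable coordinates on $V_i$ in which $a_t$ acts as $\mathrm{diag}(e^{-t},1,e^t)$ and $\mathrm{SO}(2)$ acts by a rotation'': there is no single basis of $V_i\cong\mathfrak{sl}_2$ with both properties. The $a_t$-eigenbasis is the Cartan triple $(E,H,F)$, while $k_\theta$ rotates in the plane of traceless symmetric matrices and fixes $\mathfrak{so}(2)$; passing between these two bases is precisely where the computation lives. Second, and more substantively, the length estimate $|I_t^{\pi_1(L)}(\delta)|\approx e^{-t}m_t(L)^{-1/2}\delta^{1/2}$ does not follow from ``a low-degree trigonometric polynomial with leading coefficient $\sim e^{2t}$ has sublevel sets of size $\asymp(\delta/\text{leading coeff})^{1/\text{degree}}$.'' Working it out: with $w=\pi_1(v^L)$ written as $(w_1,w_2,w_3)$ in the rotation basis, one finds the exact identity
\[
\|a_tk_\theta w\|^2 \;=\; a(\theta)^2 \;+\; \tfrac{e^{2t}}{2}\bigl(b(\theta)-w_3\bigr)^2 \;+\; \tfrac{e^{-2t}}{2}\bigl(b(\theta)+w_3\bigr)^2,
\]
where $a(\theta)=w_1\cos2\theta - w_2\sin2\theta$, $b(\theta)=w_1\sin2\theta + w_2\cos2\theta$. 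The minimizing angle $\theta_0$ solves $b(\theta_0)=w_3$, so $b'(\theta_0)=2a(\theta_0)$, and when $\pi_1(v^L)$ is (nearly) isotropic in $V_1$ one has $a(\theta_0)\approx0$, so $b-w_3$ vanishes to \emph{second} order at $\theta_0$. It is this degeneracy --- a quartic rather than quadratic critical point --- combined with $m_t(L)\approx e^{-t}|w_3|\approx e^{-t}\|v^L\|$ that produces the exponent $m_t(L)^{-1/2}\delta^{1/2}$; a generic quadratic sublevel bound would instead give $\delta e^{-t}$. Your sketch would not detect this and so, as written, does not yield the stated constant. A lesser point: $d(a_tkL)=\|a_tkv^L\|$ is an exact identity (the wedge of an integral basis computes the covolume), so the reduction-theory step you invoke to compare them is unnecessary. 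None of this changes the ultimate conclusion, but if you intend to reprove rather than cite Lemma A.6, the quartic nature of the critical point is the essential ingredient and must be made explicit.
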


\begin{proof}
Parts (i) and (ii) follow from~\cite[Lemma A.6]{EMM2}. Note that if $L$ is a null subspace as a consequence of lemma \ref{linearalgebra} above we have $\pi_2(v^L)=0$ hence $L$ is invariant by the second factor which implies~(\ref{e:intervalsnull}). To see part (iii) note that the action of $\rm{SO}(2)$ on $V_2$ is via the adjoint representation and $b_t$ expands $e_2\wedge e_4$ by a factor of $e^t$ hence (iii) follows.
\end{proof}

The above lemma reduces our study to the investigation of the sets $R_t^{L}(c^{-1}\delta)=R_t^{L,\pm,\pm}(c^{-1}\delta,c^{-1}\delta).$ In the rest of this section we will always assume that $L$ is of first type.




Recall that the set $\afrak_t^L(\delta),$ which is defined in~(\ref{thinsetnull}), is a subset of $K$ where the subspace $L$ has short vectors. Using theorem~\ref{alpha13} we may reduce to a subset where the shortest vectors are of approximately the same size, this reduction is done as follows; Let $k\in\afrak_t^L(\delta)$ and let $u_1,u_2$ be two primitive vectors in $L$ such that $a_tku_1,\hh a_tku_2$ are successive minima of $a_tkL.$ Let $\vare>0$ be small and let
\begin{equation}\label{badthinsetnull}\afrak_t^{L,1}(\delta,\vare)=\left\{k\in K\h:\h\begin{array}{c}\delta^{1+\vare}<d(a_tkL)<\delta\h\&\h\delta^{\frac{1+\vare}{2}}\leq\|a_tku_i(k)\|<\delta^{\frac{1-\vare}{2}}\h i=1,2  \\ \exists\h v\in\bbz^4\h\mbox{such that}\hh a_tk(L+v+\xi)\cap B(r)\neq\emptyset\end{array}\right\}\end{equation}
Let $\afrak_t^{L,2}(\delta,\vare)=\cup_{L\in\mathcal{Q}}(\afrak_t^L(\delta)\setminus\afrak_t^{L,1}(\delta,\vare)),$ using theorem~\ref{alpha13} we see that there exits $\beta>0$ depending on $\vare$ such that  $|\afrak_t^{L,2}(\delta,\vare)|<\delta^{1+\beta}.$ Hence we need to study the sets $\atld=\afrak_t^{L,1}(\delta,\vare).$ Let $\mathcal{Q}_t(\delta,\vare)$ be the set of quasinull subspaces which are not exceptional subspaces and for which $\atld\neq\emptyset.$

Recall that $L$ is a quasinull subspace of the first type, this means $L$ is ``almost" in $V_1,$ see definition~\ref{quasinull}. Then $\itltwo$ is essentially the entire $\mbox{SO}(2)$ in the second factor of $K.$ Indeed if $L$ is a null subspace then $L$ is in $V_1$ and $\itltwo=\mbox{SO}(2),$ as we remarked in lemma~\ref{intervals}. Our goal in this section is to show that, under the Diophantine condition, we can get a better bound for the measure of those  $k\in\mbox{SO}(2)$ which effect $\tilde{f}$ i.e. there is a constant $\eta$ depending on the Diophantine condition and an appropriate choice of $\vare$ such that for all small enough $\delta$ we have $|\atld|<|\itlone|\hh\delta^\eta$ holds for ``most'' quasinull subspaces of first type $L$, see corollary~\ref{powersaving} for a precise statement.

For simplicity let $\zeta=\qfrak\xi,$ where $\qfrak\in\mbox{SL}_4(\bbr)$ was chosen such that $Q(v)=B(\qfrak v)$ for all $v\in\bbr^4.$ Fix $t>0$ and $0<\delta<1.$ Let $L\in\qtv$ with $T/2\leq\|v^L\|\leq T.$ We also fix a small $0<\vare\ll1$ to be determined later in the course of our analysis. Let $k_\theta\in p_1(\atld)$ and let
\begin{equation}\label{conditionalsets}E_t^L(\delta,k_\theta,\vare)=\left\{k_\phi\in \mbox{SO}(2)\h:\h \begin{array}{c}\delta^{1+\vare}<d(a_t(k_\theta,k_\phi)L)<\delta \\ {\displaystyle \min_{0\neq w\in L\cap\Lambda}}\|a_t(k_\theta,k_\phi)w\|>\delta^{\frac{1+\vare}{2}}\end{array}\right\}\end{equation}
Note that $E_t^L(\delta,k_\theta,\vare)$ is an open set in $\mbox{SO}(2).$ Further we have the successive minima of $a_t(k_\theta,k_\phi)L$ are bounded by $\delta^{\frac{1+\vare}{2}}$ and $\delta^{\frac{1-\vare}{2}}$.

\begin{remark}\label{reductiontheory}Using reduction theory of orthogonal group we see that the lattice $a_t(k_\theta,k_\phi)\Lambda$ is ``narrow'' along $L$ only. To be more precise we have if there is some $\lambda\in\Lambda\setminus L$ such that $a_t(k_\theta,k_\phi)(L+\lambda)\cap B({1}/{\delta^{\frac{1-\vare}{2}}})\neq\emptyset$ then $\lambda\in L.$
\end{remark}

Roughly speaking  the general strategy now is to show that the following  dichotomy holds: either there is one translate of $L$ which always stays ``close'' to the origin, or different translates approach the origin in certain time intervals and after spending some time close to the origin move far away, see proposition~\ref{nondivergence} for the precise statement. If the second possibility holds for a subspace $L$ then we use the special geometry of the lattice $a_t(k_\theta,k_\phi)\Lambda$ to guarantee that in the intermediate times there is no translate of $L$ which intersects a fixed bounded neighborhood of the origin. A quantitative form of this argument is provided in proposition~\ref{nondivergence} below. Arguments of this kind are by no means new G.~A.~Margulis used a qualitative version of this argument in his proof of nodivergence for unipotent flows and after him these have been used to provide quantitative versions of nondivergence for polynomial-like maps by several people. We will then use the Diophantine condition to show that the first possibility cannot hold for ``many'' subspaces.

Let us fix some more notations before proceeding. If $k_\phi\in E_t^L(\delta,k_\theta,\vare)$ is given then let $L_\phi=a_t(k_\theta,k_\phi)L,$ more generally for any $x\in\bbr^4$ denote $(L+x)_\phi=a_t(k_\theta,k_\phi)(L+x).$ For $x\in\bbr^4$ let $x_{{L_\phi}^\perp}$ denote the projection of $x$ onto the orthogonal complement of $L_\phi,$ the orthogonal complement is taken with respect to the usual inner product on $\bbr^4.$



\begin{prop}\label{nondivergence}
Let $r>0$ be a constant and let $B(r)$ be the ball of radius $r$ in $\bbr^4$ also keep all the notations from before. Then there exists an absolute constant $c>0$ such that for all $0<\delta\ll1$ one of the following holds

\begin{itemize}
\item[(i)]there exists $\lambda_L\in \Lambda$ such that if for some $\lambda\in\Lambda$ there is $k_\phi\in E_t^L(\delta,k_\theta,\vare)$ for which the plane $(L+\lambda+\zeta)_\phi$ intersects $B(r)$ then $L+\lambda=L+\lambda_L.$

\item[(ii)]for all $\lambda\in\Lambda$ we have $\max_{k_\phi}\|(a_t(k_\theta,k_\phi)[\lambda+\zeta])_{{{L_\phi}^\perp}}\|>\frac{c}{\delta^{{(1-\vare)}/{2}}}.$
\end{itemize}
Furthermore if {\rm{(ii)}} holds then there exists a computable constant $\eta_1>0$ such that for $0<\vare<1/12$ we have
\begin{equation}\label{e:nondivergence}|E_t^L(\delta,k_\theta,\vare,\xi)=\{k_\phi\in E_t^L(\delta,k_\theta,\vare)\hh:\hh\exists \lambda\h{\rm{s.\hh t.}}\h (L+\lambda+\zeta)_\phi\cap B(r)\neq\emptyset\}|<C\delta^{\eta_1}\end{equation}
where $C>0$ is a computable constant.
\end{prop}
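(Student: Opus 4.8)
The plan is to analyze the motion of the family of affine planes $(L+\lambda+\zeta)_\phi$ as $k_\phi$ varies over $E_t^L(\delta,k_\theta,\vare)$, using the fact (Remark~\ref{reductiontheory}) that $a_t(k_\theta,k_\phi)\Lambda$ is narrow only in the directions of $L$. For each $\lambda\in\Lambda$ consider the function $k_\phi\mapsto \|(a_t(k_\theta,k_\phi)[\lambda+\zeta])_{{L_\phi}^\perp}\|$, i.e.\ the distance from the origin to the plane $(L+\lambda+\zeta)_\phi$. The first dichotomy is essentially a pigeonhole/geometry-of-numbers statement: if two distinct cosets $L+\lambda$, $L+\lambda'$ both produce planes meeting $B(r)$ (for possibly different $k_\phi$), then the lattice vector $\lambda-\lambda'\in\Lambda$, together with an integral basis of $L\cap\Lambda$, spans a $3$-dimensional $\Lambda$-rational subspace whose covolume can be controlled; pushing this forward by $a_t(k_\theta,k_\phi)$ and using that $a_t(k_\theta,k_\phi)L$ has both minima $\asymp \delta^{(1\pm\vare)/2}$ while $a_t(k_\theta,k_\phi)(\lambda-\lambda')$ stays bounded (since the plane meets $B(r)$ and $L_\phi$ is within distance $\delta$ of being a sublattice plane, cf.\ Remark~\ref{reductiontheory}) forces $\lambda-\lambda'$ to lie in $L$ after all, unless $\delta$ is not small — this gives alternative (i), with $\lambda_L$ the unique offending coset. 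So either (i) holds, or every coset that ever contributes has its perpendicular distance exceeding $c\,\delta^{-(1-\vare)/2}$ for \emph{some} $k_\phi$, which (after checking the only cosets that can come close at all are those with bounded representatives, finitely many in number depending on $r$) is exactly alternative (ii).

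Granting (ii), the measure estimate \eqref{e:nondivergence} is the heart of the matter. The strategy is the ``approach-and-retreat'' argument sketched before the proposition. Fix $\lambda$ with bounded representative. The key point is that $k_\phi\mapsto a_t(k_\theta,k_\phi)(\lambda+\zeta)$ traces a curve in $\bbr^4$ whose coordinates are (trigonometric) polynomial-like functions of $\phi$ of bounded degree, and likewise the projection onto $L_\phi^\perp$ varies polynomial-like. By (ii) this function attains a value $>c\,\delta^{-(1-\vare)/2}$ somewhere on $E_t^L$, while on the set $E_t^L(\delta,k_\theta,\vare,\xi)$ we want to bound, it is $\le r = O(1)$. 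A $(C,\alpha)$-good / polynomial-nondivergence estimate (the quantitative Margulis-type argument invoked in the text) then bounds the measure of $\{k_\phi : \text{value} \le r\}$ inside the larger set by $(\text{ratio of scales})^{\alpha}$-type bound, i.e.\ $\le C(r\,\delta^{(1-\vare)/2})^{\eta_1'}\le C\delta^{\eta_1}$ for a suitable $\eta_1>0$, provided $\vare<1/12$ so that the exponent $(1-\vare)/2$ is safely bounded below. One must be slightly careful: it is not a single coset but finitely many (the count bounded in terms of $r$) whose contributions must be unioned, and for each one also account for the annular/quasinull thinness of $L$ itself, which is where the interval lemma~\ref{intervals} and the specific shapes of $I_t^{\pi_i(L)}(\delta)$ enter to confine everything to a box of controlled size.

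I expect the main obstacle to be the verification that the perpendicular-distance functions are genuinely polynomial-like with \emph{uniform} constants, and that the scale at which they are ``large'' (namely $\delta^{-(1-\vare)/2}$, coming from the second successive minimum of $a_t(k_\theta,k_\phi)L$) interacts correctly with the time parameter $t$ and the size $T$ of $v^L$ so that the nondivergence estimate yields a clean power of $\delta$ independent of $t$ and $L$. In particular one needs the dichotomy in (i)–(ii) to be genuinely exhaustive: ruling out the intermediate behavior where a coset stays at distance comparable to but not below $c\,\delta^{-(1-\vare)/2}$ throughout, which is precisely what the narrowness in Remark~\ref{reductiontheory} (the lattice being narrow \emph{only} along $L$) is designed to preclude — any translate that is not $L+\lambda_L$ is pushed out to the complementary scale. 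Once this geometric input is pinned down, the rest is the standard quantitative nondivergence machinery applied to finitely many polynomial-like curves, and choosing $\eta_1$ to be the resulting (computable) exponent.
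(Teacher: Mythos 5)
Your outline captures the right two-step structure (geometric dichotomy via Remark~\ref{reductiontheory}, then quantitative nondivergence using $(C,\alpha)$-good functions), and the first step matches the paper's argument. But the second step as you have sketched it has two genuine gaps.

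First, you propose to apply the $(C,\alpha)$-good estimate directly to the perpendicular-distance function $k_\phi\mapsto\|(a_t(k_\theta,k_\phi)[\lambda+\zeta])_{L_\phi^\perp}\|$, asserting it is polynomial-like. It is not: since $L_\phi$ itself rotates with $\phi$, this quantity is the \emph{quotient} $\|a_t(k_\theta,k_\phi)(v^L\wedge(\lambda+\zeta))\|/d(a_t(k_\theta,k_\phi)L)$, and quotients of trigonometric polynomials are not $(C,\alpha)$-good in general. The paper works instead with the wedge-product norm $f_\lambda(\phi)=\|a_t(k_\theta,k_\phi)(v^L\wedge(\lambda+\zeta))\|$, whose coordinates genuinely are bounded-degree trigonometric polynomials (so Kleinbock--Margulis applies), and then converts between $f_\lambda$ and the perpendicular distance using the identity $f_\lambda=d(a_t(k_\theta,k_\phi)L)\cdot\|(\cdot)_{L_\phi^\perp}\|$ together with the two-sided bound $\delta^{1+\vare}<d(a_t(k_\theta,k_\phi)L)<\delta$ that is built into the definition of $E_t^L(\delta,k_\theta,\vare)$. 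This conversion is where the exponent $\frac{1+3\vare}{2}$ and the hypothesis $\vare<1/12$ actually come from, and it cannot be skipped.

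Second, your plan to sum over cosets by arguing that only finitely many $\lambda$ (``bounded in terms of $r$'') can ever contribute does not hold up: as $k_\phi$ rotates, $a_t(k_\theta,k_\phi)(\lambda+\zeta)$ can sweep a long arc, so the number of cosets meeting $B(r)$ for \emph{some} $k_\phi$ is not uniformly bounded as $t\to\infty$. The paper avoids this entirely by introducing the open sets $J_\lambda=\{f_\lambda<\tfrac{c}{3}\delta^{(1+3\vare)/2}\}$, observing that $E_\lambda\subset J_\lambda$ and, crucially, that the $J_\lambda$ are pairwise disjoint (a consequence of the same geometric separation as in the dichotomy). The $(C,\alpha)$-good bound is applied inside each maximal interval of $J_\lambda$, producing a uniform factor $\delta^{\eta_1}$, and disjointness then lets one sum to $\sum_\lambda|J_\lambda|\le 1$ with no need to count the $\lambda$'s. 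You should adopt this disjointness mechanism; without it the sum over $\lambda$ is not controlled. (The appeal to Lemma~\ref{intervals} you mention at the end is not needed here; it belongs to the later argument that aggregates these estimates over subspaces $L$.)
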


\begin{proof}
Assume (ii) fails that is, there exists $\lambda_0$ such that $\|(a_t(k_\theta,k_\phi)[\lambda_0+\zeta])_{{{L_\phi}^\perp}}\|\leq\frac{c}{\delta^{{(1-\vare)}/{2}}}$ for all $k_\phi$ and all $c>0.$ This, for suitable choice of $c,$ implies that $(L+\lambda_0+\zeta)_\phi$ intersects $B(\frac{1}{4\delta^{{(1-\vare)}/{2}}})$ for all $k_\phi.$ Now suppose $(L+\lambda+\zeta)_\phi\cap B(r)\neq\emptyset$ for some $k_\phi$ and $\lambda.$ Now remark~\ref{reductiontheory} guarantees that for $\delta\ll r$ we have $(L+\lambda-\lambda_0)_\phi=L_\phi$ and hence (i) holds. This establishes the first claim.

Assume now that (ii) holds for some $L\in\qtv$ and $k_\theta\in p_1(\atld).$ Denote by $h_\lambda(\phi)=\|(a_t(k_\theta,k_\phi)(v^L\wedge(\lambda+\zeta))\|_m,$ where $\|\h\|_m$ denotes the maximum norm on $\bigwedge^3\bbr^4.$ Note that each function in the definition of $h_\lambda$ is a linear combination of $\{\sin^i\phi\cos^j\phi\h:\h0\leq i,j\leq 3\}.$ Now using~\cite[Proposition 3.4]{KM} we have; There are constant $C_1,\beta_1>0$ such that the function $h_\lambda$ is $(C_1,\beta_1)$ good. It follows from the definition of $(C,\alpha)$-good functions, see~\cite{KM}, that $f_\lambda(\phi)=\|(a_t(k_\theta,k_\phi)(v^L\wedge(\lambda+\zeta))\|$ is $(C_2,\beta_2)$-good for some $C_2,\beta_2>0.$

Recall now that
\begin{equation}\label{3estimates}\begin{array}{c}\max_{k_\phi\in E}\|(a_t(k_\theta,k_\phi)[\lambda+\zeta])_{{{L_\phi}^\perp}}\|>\frac{c}{\delta^{{(1-\vare)}/{2}}}\vspace{.75mm}\\ \delta^{1+\vare}<a_t(k_\theta,k_\phi)<\delta\h\mbox{for all}\h k_\phi\in E_t^L(\delta,k_\theta,\vare,\xi)\vspace{.75mm}\\ f_\lambda(\phi)=\|(a_t(k_\theta,k_\phi)(v^L\wedge(\lambda+\zeta))\|=\|(a_t(k_\theta,k_\phi)v^L\|\|(a_t(k_\theta,k_\phi)[\lambda+\zeta])_{{L_\phi}^\perp}\|\end{array}\end{equation}

Hence we have $\max_\phi f_\lambda(\phi)\geq\frac{c\delta^{1+\vare}}{\delta^{{(1-\vare)}/{2}}}=c\hh\delta^{\frac{1+3\vare}{2}}$ for all $\lambda\in\Lambda.$ For $\lambda\in\Lambda$ define
\begin{equation}\label{marking1}J_\lambda=\{k_\phi\in\mbox{SO}(2)\h:\h f_\lambda(\phi)<\frac{c}{3}\delta^{\frac{1+3\vare}{2}}\}\end{equation}
The sets $J_\lambda$'s are open and an argument like that of the beginning of the proof using the estimates in~(\ref{3estimates}) shows that $J_\lambda$'s are disjoint. Define
\begin{equation}\label{elambda}E_\lambda=\{k_\phi\in E_t^L(\delta,k_\theta,\vare)\hh:\h\h (L+\lambda+\zeta)_\phi\cap B(r)\neq\emptyset\}\end{equation}
Note that for $\phi\in E_\lambda$ we have $f_\lambda(\phi)<r\delta$ hence for $\delta\ll r$ we have $E_\lambda\subset J_\lambda.$ For each $\phi\in J_\lambda$ now let $I_\phi(\lambda)$ be the largest interval containing $\phi$ and contained in $J_\lambda.$ Since $f_\lambda$'s are $(C_2,\beta_2)$-good we have
\begin{equation}\label{goodfunctions}|\{\gamma\in I_\phi(\lambda)\h:\h f_\lambda(\gamma)<r\delta\}|<C(r,c_1,c_2)\delta^{\beta_2\frac{1-3\vare}{2}}|I_\phi(\lambda)|\end{equation}
Let $\vare<1/12$ and we let $\eta_1=\beta_2/4.$ Since $I_\phi(\lambda)$'s cover $J_\lambda$ and are disjoint we have
\begin{equation}\label{markingestimate}|E_\lambda|\leq|\{\phi\in J_\lambda\h:\h f_\lambda(\phi)<r\delta\}|<C\delta^{\eta_1}|J_\lambda|\end{equation}
Note that $E_t^L(\delta,k_\theta,\vare,\xi)\subset\cup_\lambda E_\lambda$ and $E_\lambda\subset J_\lambda$'s are disjoint hence we have
\begin{equation}\label{nondivergencecontrol}|E_t^L(\delta,k_\theta,\vare,\xi)|\leq\sum_\lambda|E_\lambda|\leq C\delta^{\eta_1}\sum_\lambda|J_\lambda|\leq C\delta^{\eta_1}\end{equation}
This finishes the proof of the second claim.
\end{proof}

We now study more extensively the occurrence of case (i) in proposition~\ref{nondivergence}. Thus let $L\in\qtv$ be a quasinull subspace with $T/2\leq\|v^L\|\leq T$ and let $k_\theta\in p_1(\atld)$ and suppose $\lambda_0\in\Lambda$ is such that (i) in proposition~\ref{nondivergence} holds with this $\lambda_0$. Let $0<\eta_2\ll 1$ be small number to be determined later. Define
\begin{equation}\label{e:nondivergence(i)}E_t^L(\lambda_0)=\{k_\phi\in E_t^L(\delta,k_\theta,\vare)\hh:\hh\h (L+\lambda_0+\zeta)_\phi\cap B(r)\neq\emptyset\}\end{equation}
There are two cases to be considered either $|E_t^L(\lambda_0)|\leq\delta^{\eta_2},$ or $|E_t^L(\lambda_0)|>\delta^{\eta_2}.$ The former implies the desired bound on the measure of $\atld$ hence the later is the case which requires a more careful study. We start with the following

\begin{lem}\label{smalltrans}
Let $0<\eta_2<\eta_1$ and the notations and assumptions be as in the above paragraph. In particular assume that (i) in proposition~\ref{nondivergence} holds for some $\lambda_0\in\Lambda$ and further assume that $|E_t^L(\lambda_0)|>\delta^{\eta_2}.$ Then $\|(\lambda_0+\zeta)_{{L}^\perp}\|<\frac{2r\delta^{1-\eta_2}}{T},$ where $r$ is as in the statement of of proposition~\ref{nondivergence}.
\end{lem}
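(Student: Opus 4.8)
The plan is to argue by contradiction: assume $\|(\lambda_0+\zeta)_{L^\perp}\|\geq \frac{2r\delta^{1-\eta_2}}{T}$ and show that the set $E_t^L(\lambda_0)$ must then be too small, contradicting $|E_t^L(\lambda_0)|>\delta^{\eta_2}$. The point is to track the size of the component of $a_t(k_\theta,k_\phi)(\lambda_0+\zeta)$ orthogonal to $L_\phi=a_t(k_\theta,k_\phi)L$ as $k_\phi$ varies, and to compare with the constraint that this plane meets $B(r)$. First I would recall the factorization of the wedge norm used already in \eqref{3estimates}: setting $f_{\lambda_0}(\phi)=\|a_t(k_\theta,k_\phi)(v^L\wedge(\lambda_0+\zeta))\|$, we have $f_{\lambda_0}(\phi)=\|a_t(k_\theta,k_\phi)v^L\|\cdot\|(a_t(k_\theta,k_\phi)(\lambda_0+\zeta))_{L_\phi^\perp}\|$. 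On $E_t^L(\delta,k_\theta,\vare)$ we know $d(a_t(k_\theta,k_\phi)L)=\|a_t(k_\theta,k_\phi)v^L\|<\delta$, and if the translated plane meets $B(r)$ then $\|(a_t(k_\theta,k_\phi)(\lambda_0+\zeta))_{L_\phi^\perp}\|<r$; hence on $E_t^L(\lambda_0)$ we get $f_{\lambda_0}(\phi)<r\delta$, which is the same inequality exploited in the proof of Proposition \ref{nondivergence}.

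Next I would produce a \emph{lower} bound for $\max_\phi f_{\lambda_0}(\phi)$ coming from the assumed lower bound on $\|(\lambda_0+\zeta)_{L^\perp}\|$. The key observation is that $v^L\wedge(\lambda_0+\zeta)$ depends only on the $L^\perp$-component of $\lambda_0+\zeta$, so $\|v^L\wedge(\lambda_0+\zeta)\|$ is comparable to $\|v^L\|\cdot\|(\lambda_0+\zeta)_{L^\perp}\|\geq \frac{T}{2}\cdot\frac{2r\delta^{1-\eta_2}}{T}=r\delta^{1-\eta_2}$ (using $\|v^L\|\geq T/2$). Since $a_t(k_\theta,k_\phi)$ acts on $\bigwedge^3\bbr^4$ and $a_t$ has an expanding direction, one has for a suitable choice of $k_\phi$ (or simply at $\phi$ realizing the relevant minimum) that $f_{\lambda_0}(\phi)=\|a_t(k_\theta,k_\phi)(v^L\wedge(\lambda_0+\zeta))\|$ is bounded below by a constant times $\|v^L\wedge(\lambda_0+\zeta)\|$ once we are on $E_t^L$; more carefully, since $E_t^L(\delta,k_\theta,\vare)$ forces the $L$-directions of $a_t(k_\theta,k_\phi)\Lambda$ to be short (of size $\delta^{(1\pm\vare)/2}$) while by Remark \ref{reductiontheory} the lattice is not narrow transversally, the transverse component $\|(a_t(k_\theta,k_\phi)(\lambda_0+\zeta))_{L_\phi^\perp}\|$ cannot shrink below order $\delta^{(1-\vare)/2}\cdot\|(\lambda_0+\zeta)_{L^\perp}\|/\|v^L\|$-type quantities; combining gives $\max_\phi f_{\lambda_0}(\phi)\gtrsim \delta\cdot\delta^{-\text{(something)}}$, in any case a quantity much larger than $r\delta^{1-\eta_2}$ when $\eta_2$ is small. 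The cleanest route is: $f_{\lambda_0}$ is $(C_2,\beta_2)$-good (established in the proof of Proposition \ref{nondivergence}), $\max_\phi f_{\lambda_0}(\phi)\geq c\,\delta^{1-\eta_2}$ by the lower bound on the wedge, while $f_{\lambda_0}(\phi)<r\delta$ on $E_t^L(\lambda_0)$; the $(C_2,\beta_2)$-good inequality then yields
\begin{equation}\label{e:smalltrans}
|E_t^L(\lambda_0)|\leq \left|\{\phi:\ f_{\lambda_0}(\phi)<r\delta\}\right|\leq C_2\left(\frac{r\delta}{c\,\delta^{1-\eta_2}}\right)^{\beta_2}|\mathrm{SO}(2)|=C\,\delta^{\beta_2\eta_2},
\end{equation}
which for $\eta_2$ chosen small relative to $1$ (and certainly $\delta^{\beta_2\eta_2}<\delta^{\eta_2}$ is the wrong direction, so instead one picks $\eta_2$ with $\beta_2\eta_2>\eta_2$, i.e. one rescales the exponent) contradicts $|E_t^L(\lambda_0)|>\delta^{\eta_2}$. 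I would arrange the constants so that the contradiction is clean — e.g. replacing the target exponent by $\beta_2\eta_2/2$ throughout the surrounding argument.

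The main obstacle I anticipate is getting the lower bound on $\max_\phi f_{\lambda_0}(\phi)$ honestly, i.e. making precise the claim that the orthogonal component of $a_t(k_\theta,k_\phi)(\lambda_0+\zeta)$ cannot be uniformly tiny. This is where Remark \ref{reductiontheory} and the successive-minima bounds built into $E_t^L(\delta,k_\theta,\vare)$ must be used: the lattice $a_t(k_\theta,k_\phi)\Lambda$ is short only along $L$, so any vector with a definite $L^\perp$-component of size $\|(\lambda_0+\zeta)_{L^\perp}\|$ gets sheared but its transverse part is controlled from below, and the factor $\|v^L\|\sim T$ in the denominator is exactly compensated by $\|v^L\|\sim T$ appearing in the wedge. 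Once that comparison is pinned down, the rest is the standard $(C,\alpha)$-good measure estimate as in \eqref{e:smalltrans}. I would also need to double-check that the $\phi$ realizing the lower bound on the wedge can be taken inside (or close to) $E_t^L(\delta,k_\theta,\vare)$, or else phrase the lower bound as $\max$ over all of $\mathrm{SO}(2)$ and invoke the $(C,\alpha)$-good property on the whole circle, which is cleaner and avoids this issue.
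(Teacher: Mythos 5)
Your proposal has the right overall shape — factor the wedge norm, use $f_{\lambda_0}(\phi)<r\delta$ on $E_t^L(\lambda_0)$, lower-bound $\|v^L\wedge(\lambda_0+\zeta)\|\approx aT$ under the contradiction hypothesis $a\geq 2r\delta^{1-\eta_2}/T$ — but the final measure estimate does not close, and you notice this yourself without actually resolving it. The generic $(C_2,\beta_2)$-good bound from the proof of Proposition~\ref{nondivergence} gives $|E_t^L(\lambda_0)|\leq C(r\delta/(c\delta^{1-\eta_2}))^{\beta_2}\approx C\delta^{\beta_2\eta_2}$, and since $\beta_2$ is only known to be some positive constant (and for trigonometric polynomials of the degree arising there is $<1$), this does \emph{not} contradict $|E_t^L(\lambda_0)|>\delta^{\eta_2}$. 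The proposed fix — ``one rescales the exponent'' / ``replacing the target exponent by $\beta_2\eta_2/2$'' — changes the statement of the lemma, and the exponent $1-\eta_2$ in the conclusion is what is used downstream (in Lemma~\ref{onetransestimate}), so one cannot simply degrade it.

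The missing idea is that one must exploit the \emph{exact} structure of the $K_2$-action on $\bigwedge^3\bbr^4$, not the generic $(C,\alpha)$-good bound. In the basis $e_{ijk}$, $K_2$ acts by genuine rotations on the plane $\mathrm{span}\{e_{123},e_{124}\}$ (weight $1$, i.e.\ degree~$1$ in $\cos\phi,\sin\phi$), and $a_t$ contracts $e_{123}$ while fixing $e_{124}$. Writing $w=(k_\theta,e)(v^L\wedge(\lambda_0+\zeta))$, the quantity $f_{\lambda_0}(\phi)=\|a_tk_\phi w\|$ therefore grows at least linearly in the angular distance of $\phi$ from the unique angle $\phi_0$ at which the plane-component of $k_\phi w$ aligns with $e_{123}$. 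Since $|E_t^L(\lambda_0)|>\delta^{\eta_2}$ forces some $k_\phi\in E_t^L(\lambda_0)$ to lie at angular distance $\geq\delta^{\eta_2}/2$ from $\phi_0$, at that $k_\phi$ one gets $f_{\lambda_0}(\phi)\geq \frac{\delta^{\eta_2}}{2}\|w\|\geq\frac{aT\delta^{\eta_2}}{4}$. Combining with $f_{\lambda_0}(\phi)=a_\phi\|a_tk_\phi v^L\|\leq r\delta$ gives $a\leq O(r\delta^{1-\eta_2}/T)$. In effect the function is ``$(C,1)$-good'' because the relevant representation has weight~$1$, and that linear exponent is exactly what makes the comparison against $\delta^{\eta_2}$ work; the blanket $(C_2,\beta_2)$-good statement simply throws that information away.
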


\begin{proof}
Note that the representation of $\mbox{SO}(2,2)$ on $\bigwedge^3\bbr^4$ is the contragredient representation to the standard representation of $\mbox{SO}(2,2)$ on $\bbr^4$. More precisely,  let $e_{ijk}=e_i\wedge e_j\wedge e_k$ for $1\leq i,j,k\leq 4.$ Then action of $K_1$ fixes planes spanned by  $\{e_{134},e_{123}\}$ and $\{e_{341},e_{342}\}$ and similarly $K_2$ fixes the planes spanned by $\{e_{123},e_{124}\}$ and $\{e_{341},e_{342}\}.$ \vspace{1mm}Further $a_te_{123}=e^{-t}e_{123},$ $a_te_{124}=e_{124},$ $a_te_{341}=e_{341}$ and $a_te_{342}=e^te_{342}.$ Now let $w=(k_\theta,e)(v^L\wedge(\lambda_0+\zeta)),$ $a=\|(\lambda_0+\zeta)_{L^\perp}\|$ and for any $k_\phi\in E_t^L(\lambda_0)$ let $a_\phi=\|(a_t(k_\theta,k_\phi)[\lambda_0+\zeta])_{{L_\phi}^\perp}\|.$ Recall that $T/2\leq\|v^L\|\leq T$ hence $aT/2\leq\|w\|\leq aT.$ As was mentioned above $K_2$ acts on the plane spanned by $\{e_{123},e_{124}\}$ by rotation on $\bbr^2$ and $e_{123}$ is the contracting direction of $\{a_t\}.$ Also note that we are assuming that $|E_t^L(\lambda_0)|>\delta^{\eta_2}.$ Hence there exist $k_\phi\in E_t^L(\lambda_0)$ such that
\begin{equation}\label{e:smalltrans}\frac{aT\delta^{\eta_2}}{2}\leq\|a_t(k_\theta,k_\phi)[v^L\wedge(\lambda_0+\zeta)]\|=a_\phi\|a_t(k_\theta,k_\phi)v^L\|\end{equation}
Note now that we have $a_\phi\leq r$ and $\|a_t(k_\theta,k_\phi)v^L\|\leq\delta.$ So we get $a\leq\frac{2r\delta^{1-\eta_2}}{T}$ as we wanted to show.
\end{proof}

Before proceeding let us draw the following corollary. This will be used in the proof of theorem~\ref{unboundedequi(2,2)} to control the contribution of ``small'' subspaces.

\begin{cor}\label{smallsubspaceset}
Let $\eta_1$ be as in proposition~\ref{nondivergence} and let $\eta_2<\eta_1<{1}/{4}.$ Then for any $M>1$ there is a $\delta_0=\delta_0(M,\Lambda)$ such that if $\delta<\delta_0$ then for any $L$ with $\|v^L\|<M$ either $E_t^L(\delta,k_\theta,\vare,\xi)<\delta^{\eta_2}$ or there is some $\lambda\in\Lambda$ such that $\lambda+\zeta\in L.$
\end{cor}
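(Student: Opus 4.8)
The plan is to combine the dichotomy of Proposition \ref{nondivergence} with Lemma \ref{smalltrans} and then exploit that $\Lambda = \qfrak\bbz^4$ is a discrete (in fact fixed) lattice. Suppose $L$ is a quasinull subspace with $\|v^L\| < M$, and suppose we are not in the situation $E_t^L(\delta,k_\theta,\vare,\xi) < \delta^{\eta_2}$; I want to conclude that some translate $L+\lambda$ contains $\zeta$, i.e. $\lambda+\zeta \in L$ for some $\lambda \in \Lambda$. First I would apply Proposition \ref{nondivergence}: since $|E_t^L(\delta,k_\theta,\vare,\xi)| \geq \delta^{\eta_2} > C\delta^{\eta_1}$ for $\delta$ small (using $\eta_2 < \eta_1$), case (ii) of the proposition cannot hold, so case (i) must hold — there is a distinguished $\lambda_0 \in \Lambda$ such that every translate of $L$ meeting $B(r)$ (after the flow) equals $L + \lambda_0$. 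Consequently $E_t^L(\delta,k_\theta,\vare,\xi) = E_t^L(\lambda_0)$, so $|E_t^L(\lambda_0)| > \delta^{\eta_2}$.

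Next I would invoke Lemma \ref{smalltrans}: under exactly these hypotheses it gives $\|(\lambda_0+\zeta)_{L^\perp}\| < \frac{2r\delta^{1-\eta_2}}{T}$, where $T$ is comparable to $\|v^L\|$. Since $\|v^L\| < M$ we have $T \geq c$ for an absolute constant (or simply $T/2 \leq \|v^L\| < M$ forces $T < 2M$, but more importantly $T$ is bounded below because $L$ is $\Lambda$-rational so $\|v^L\|$ cannot be arbitrarily small — the covolume of $L \cap \Lambda$ is bounded below over $\Lambda$-rational subspaces of a fixed lattice). Hence $\|(\lambda_0+\zeta)_{L^\perp}\| < C(M,\Lambda)\,\delta^{1-\eta_2}$. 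Because $\eta_2 < 1/4 < 1$, the right-hand side tends to $0$ as $\delta \to 0$.

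Now the key discreteness step: the quantity $\|(\lambda_0+\zeta)_{L^\perp}\|$ measures the distance from $\zeta$ to the affine plane $L - \lambda_0$, i.e. to a translate of a $\Lambda$-rational subspace by a lattice vector. I would argue that among all pairs $(L,\lambda_0)$ with $\|v^L\| < M$, the set of affine planes $\{L - \lambda : \zeta \notin L - \lambda\}$ stays a definite distance away from $\zeta$. Indeed the $\Lambda$-rational $2$-planes $L$ with $\|v^L\| < M$ form a finite set (discreteness of $\bigwedge^2\Lambda$), and for each such $L$ the translates $L - \lambda$, $\lambda \in \Lambda$, that come within bounded distance of $\zeta$ are finite in number; for each of the finitely many resulting affine planes not containing $\zeta$, the distance $\mathrm{dist}(\zeta, L-\lambda)$ is a fixed positive number, so their minimum is a positive constant $\rho = \rho(M,\Lambda,\zeta) > 0$. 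Choosing $\delta_0 = \delta_0(M,\Lambda)$ small enough that $C(M,\Lambda)\,\delta_0^{1-\eta_2} < \rho$, the inequality $\|(\lambda_0+\zeta)_{L^\perp}\| < \rho$ forces $\zeta \in L - \lambda_0$, i.e. $\lambda_0 + \zeta \in L$ (here I absorb the sign of $\lambda_0$ into the choice of lattice element), which is the second alternative in the conclusion.

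The main obstacle I anticipate is making the finiteness/discreteness argument in the last paragraph genuinely uniform: one must check that bounding $\|v^L\|$ really does cut down to finitely many planes $L$ (this uses that $L \cap \Lambda$ is a lattice in $L$ with covolume $1/d(L)$ and that only finitely many primitive bivectors in $\bigwedge^2\Lambda$ have norm below $M$), and that for each fixed $L$ only boundedly many relevant translates $\lambda_0$ occur — the latter because $(L + \lambda_0 + \zeta)_\phi$ is required to meet the fixed ball $B(r)$, which, pulled back through the bounded operators $a_t(k_\theta,k_\phi)$ restricted to the relevant compact range, confines $\lambda_0$ modulo $L \cap \Lambda$ to a bounded set. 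Once that bookkeeping is in place the positive lower bound $\rho$ exists simply because a finite set of planes avoiding a point stays away from it, and the rest is choosing $\delta_0$.
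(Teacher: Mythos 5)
Your proof is correct and follows essentially the same route as the paper: rule out case (ii) of Proposition~\ref{nondivergence} from the hypothesis $|E_t^L(\delta,k_\theta,\vare,\xi)|\geq\delta^{\eta_2}>C\delta^{\eta_1}$ (valid for small $\delta$ because $\eta_2<\eta_1$), apply Lemma~\ref{smalltrans} with the distinguished $\lambda_0$ from case (i), and then use discreteness together with the finiteness of $\Lambda$-rational $2$-planes with $\|v^L\|<M$ to force $\lambda_0+\zeta\in L$. One small inaccuracy worth flagging: your remark that ``$(L+\lambda_0+\zeta)_\phi$ meeting $B(r)$, pulled back through the bounded operators $a_t(k_\theta,k_\phi)$, confines $\lambda_0$ modulo $L\cap\Lambda$ to a bounded set'' is wrong as stated, since $a_t$ is unbounded as $t\to\infty$; but it is also superfluous, because $L$ being $\Lambda$-rational already makes the orthogonal projection of $\Lambda+\zeta$ onto $L^\perp$ a translate of a lattice, so $\inf_{\lambda\in\Lambda}\|(\lambda+\zeta)_{L^\perp}\|$ is either $0$ or a strictly positive $\rho_L$ with no further bounding argument required.
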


\begin{proof}
Suppose for some $L$ we have $E_t^L(\delta,k_\theta,\vare,\xi)>\delta^{\eta_2}.$ Hence the conditions in lemma~\ref{smalltrans} are satisfied for $L$ and some $\lambda\in\Lambda.$ This implies $\|(\lambda+\zeta)_{{L}^\perp}\|<\frac{2r\delta^{1-\eta_2}}{T}\leq 2r\delta^{1-\eta_2}.$ Now the assertion follows from discreteness of $\Lambda$ together with the assumption that $\|v^L\|<M.$
\end{proof}

It is now more convenient to work with $Q$ and the standard lattice $\bbz^4.$ Hence from now until the end of this section by a quasinull subspace we mean a $\bbz^4$-rational quasinull subspace.

We are now ready to use the Diophantine condition. Let us start by recalling the following theorem which is proved in~\cite[section 10]{EMM2}. This theorem deals with the Diophantine properties of the homogeneous part. It says ``most" quasinull subspaces are null subspaces of a rational approximation of $Q$ and in particular if $Q$ is not EWAS then there are not ``many'' quasinull subspaces. The precise statement is as follows

\begin{thm}\label{approximation}{\rm{(\cite[Proposition 10.11]{EMM2})}}
There exists an absolute constant $\rho>0$ such that the following holds: Suppose $Q$ is any irrational form of signature $(2,2)$. Then for every sufficiently small $\tau_2>0$ and every $T>2$ one of the following holds:
\begin{itemize}
\item[(i)] The number of quasinull subspaces of $Q$ of norm between $T/2$ and $T$ is $O(T^{1-\tau_2}).$
\item[(ii)] There exists a split integral form $Q'$ with coefficients bounded by a fixed power of $T^{\tau_2}$ and $1\leq\lambda\in\bbr$ satisfying $\|Q-\frac{1}{\lambda}Q'\|\leq T^{-\rho}$ such that the number of quasinull subspaces of $Q$ with norm between $T/2$ and $T$ which are not null subspaces of $Q'$ is $O(T^{1-\tau_2}).$
\end{itemize}
\end{thm}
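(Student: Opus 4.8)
The plan is to reduce the statement to a genuinely Diophantine problem about rational approximation of $Q$ by split forms, exploiting the rational parametrization $f:(V_1,V_2)\mapsto Q/\text{prop.}$ from Lemma~\ref{linearalgebra}. Recall from Lemma~\ref{linearalgebra}(iv) that a $2$-plane $L$ is null for $Q$ precisely when $\pi_1(v^L)=0$ or $\pi_2(v^L)=0$, and quasinull means $\|\pi_1(v^L)\|\,\|\pi_2(v^L)\|<\mu_1$; since $\|v^L\|\asymp T$, say for subspaces of the first type we have $\|\pi_1(v^L)\|\asymp T$ and $\|\pi_2(v^L)\|<C/T$. So a quasinull $L$ of norm $\asymp T$ is a $2$-plane whose projection to $V_2$ is an \emph{integer} vector (a wedge of integral vectors spanning $L\cap\bbz^4$) of length $<C/T$ living inside the quadric $\{w\in V_2:Q^{(6)}(w,w)=0\}$ — here one uses that $v^L\wedge v^L=0$ forces $Q^{(6)}(\pi_1(v^L),\pi_1(v^L))=-Q^{(6)}(\pi_2(v^L),\pi_2(v^L))$ by orthogonality in Lemma~\ref{linearalgebra}(i), which is $O(1/T^2)$, i.e. $\pi_2(v^L)$ is within $O(1/T)$ of the null cone of $Q^{(6)}|_{V_2}$. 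Thus the count of quasinull subspaces of norm $\asymp T$ is controlled by the number of primitive integer points in $\bigwedge^2\bbz^4$ of norm $\asymp T$ whose $V_2$-projection is short, equivalently whose $V_1$-projection lies within $O(1/T)$ of the null cone of the ternary form $Q^{(6)}|_{V_1}$ at scale $T$.

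First I would set up the counting: fix the integral lattice $\bigwedge^2\bbz^4\subset\bigwedge^2\bbr^4=V_1\oplus V_2$, and for the first type consider $N(T)=\#\{w\in\bigwedge^2\bbz^4 \text{ primitive, decomposable}: \|w\|\asymp T,\ \|\pi_2(w)\|<\mu_1/\|\pi_1(w)\|\}$. The decomposability (Plücker) condition $w\wedge w=0$ is one quadratic equation, and combined with $\|\pi_2(w)\|<C/T$ this says $\pi_1(w)$ is an integer-ish point of size $T$ within distance $O(1)$ of the null cone of the ternary form $Q^{(6)}_1:=Q^{(6)}|_{V_1}$ — except $\pi_1(w)$ need not be integral since $V_1$ is only a rational subspace. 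The key dichotomy is then whether $Q^{(6)}_1$ (equivalently $Q$) is well approximated by a \emph{split rational} ternary form with small height. I would invoke the standard fact (a counting/geometry-of-numbers argument on the cone, as in the homogeneous Oppenheim circle of ideas) that for an irrational ternary form $q$ of signature $(2,1)$, either the number of integer points of norm $\asymp T$ within bounded distance of $\{q=0\}$ is $O(T^{1-\tau_2})$, or $q$ admits a rational split approximation $q'$ of height $\le T^{O(\tau_2)}$ with $\|q - \tfrac1\lambda q'\|\le T^{-\rho}$, and all but $O(T^{1-\tau_2})$ of those integer points lie on $\{q'=0\}$. The constant $\rho$ is absolute and comes from the geometry-of-numbers estimate on how densely the null cone of a fixed-height split form can be approximated. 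Pulling this back through the rational map $f$ of Lemma~\ref{linearalgebra}(ii)--(iii) — which sends a rational split $V_1$ to a rational split $Q'$ with controlled height blow-up — converts alternative (ii) for $q=Q^{(6)}_1$ into alternative (ii) for $Q$, and a quasinull $L$ with $\pi_2(v^L)\ne 0$ that is null for $q'$ is null for $Q'$ by Lemma~\ref{linearalgebra}(iv). Running the symmetric argument for the second type (using $V_2$, $a_t$ expanding $e_2\wedge e_4$) and taking the worse of the two constants handles all quasinull subspaces.

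The main obstacle I expect is exactly the rationality/height bookkeeping in the middle step: $V_1$ and $V_2$ are only $\bbz^4$-\emph{rational} subspaces, so $\pi_1(v^L)$, $Q^{(6)}_1$, and the comparison form $Q'$ all carry denominators that must be uniformly bounded in terms of $Q$ (not $T$) for the rational approximation transfer via $f$ to preserve the ``height $\le T^{\tau_2}$, error $\le T^{-\rho}$'' shape with an \emph{absolute} $\rho$; controlling these denominators and checking that $f$ is Lipschitz with polynomial height growth near the relevant locus is the technical heart. A secondary point is ruling out, in alternative (i), an accidental accumulation of quasinull subspaces all sharing the same short $\pi_2$-vector — but such $L$ all contain a fixed rational line, hence lie in a single rational $3$-space, and there are only $O(T)$ (indeed $O(1)$ after accounting for the excluded exceptional/null subspaces) of those, which is absorbed into the $O(T^{1-\tau_2})$ bound. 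Once the ternary statement is in hand the rest is the pull-back, so I would state and prove the ternary counting lemma first and then deduce Theorem~\ref{approximation}; since the problem statement attributes this to \cite[Proposition 10.11]{EMM2}, I would in practice just cite it, the above being the structure of that argument.
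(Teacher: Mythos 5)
Your high-level plan---pass to $\bigwedge^2\bbr^4=V_1\oplus V_2$, observe that quasinull $L$ of first type have $\|\pi_2(v^L)\|=O(1/T)$, and reduce to a ternary counting problem on the null cone of $Q^{(6)}|_{V_1}$---is the right framework, and the paper itself ultimately just cites \cite[Proposition 10.11]{EMM2}, which is also your fallback. But the middle step as you set it up contains a genuine misconception, not merely ``rationality/height bookkeeping.'' You write that ``$V_1$ and $V_2$ are only $\bbz^4$-rational subspaces, so $\pi_1(v^L)$, $Q^{(6)}_1$, and $Q'$ all carry denominators that must be uniformly bounded.'' This is false: $V_1$ and $V_2$ are the $\mathrm{SO}(Q)$-invariant halves of $\bigwedge^2\bbr^4$, and when $Q$ is irrational they are \emph{irrational} subspaces---generically they contain no nonzero integer vector at all. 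Consequently $\pi_1(\bigwedge^2\bbz^4)$ is not a lattice in $V_1$, the form $Q^{(6)}|_{V_1}$ has no rational structure, and the ``ternary counting lemma'' you wish to invoke (integer points of norm $\asymp T$ within $O(1)$ of the null cone of a ternary form, dichotomy with split rational approximation) simply does not apply to this object. This is the central difficulty, not a side issue to clean up afterward.

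The missing idea, which the paper points to in the remark after the theorem and in Appendix~\ref{sec:counting} (Proposition~\ref{rationalspace}, cf.\ \cite[Section 10]{EMM2}), is to first produce a genuinely \emph{rational} three-dimensional subspace $U\subset\bigwedge^2\bbr^4$ with a reduced integral basis of norm $\le T^{\tau}$ whose $V_2$-projections have norm $\le T^{\tau-1}$; i.e.\ $U$ is a rational subspace tracking $V_1$ to scale $T$. This is achieved by applying the system of inequalities from \cite{EMM1} not to $\bbz^4$ but to a suitably dilated lattice in $\bbr^6$ (expanding $V_2$ and contracting $V_1$ by a factor of order $T$). Once $U$ exists, the dichotomy is driven by whether $Q^{(6)}|_U$ is anisotropic or split over $\bbq$, and only in the split case does one recover $Q'=f(U,U^\perp)$ via Lemma~\ref{linearalgebra}(ii)--(iii). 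Without the rational $U$ there is nothing for $f$ to act on. Your proposal also does not explain where the absolute exponent $\rho$ would come from in your ``standard fact''; in the actual argument $\rho$ falls out of the quality of approximation of $V_1$ by $U$ furnished by the dilated-lattice estimate, which is precisely the step you skip. So: right destination, right last step (pullback through $f$), but the bridge in the middle is missing and what you replace it with is not correct as stated.
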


We refer to section 10 of~\cite{EMM2} and also appendix~\ref{sec:counting} of this paper for a more careful analysis of this theorem. The main ingredient in the proof is the system of inequalities from ~\cite{EMM1}. The main difference is that these inequalities are applied to a certain dilated lattice in $\bigwedge^2\bbr^4=\bbr^6.$

Let us now outline the rest of the proof. Let $Q_\xi$ be the inhomogeneous quadratic form as in statement of theorem~\ref{(2,2)}. We will apply the above theorem with appropriate parameters $\rho,\tau_2$ to be determined later. Let $T\geq2$ be given. Now if (i) above holds, which is always the case if $Q$ is not EWAS, then we already have a good control on the number of quasinull subspaces in question and we will get the desired control on the measure of the set $\cup_L\atld.$ Hence we may assume (ii) above holds. Again if $L$ is not a null subspace of the appropriate approximation of $Q$ we proceed as in case (i). So we need to consider the contribution from quasinull subspaces which are null subspaces of some rational approximation. In this case using lemma~\ref{nondivergence} we will be reduced to the case where only one translate of $L$ has contribution. We then will use the Diophantine property of $\xi$ and get a control on the number of such subspaces, this will complete the proof.

We need to fix some more notations before proceeding with the above outline. If $Q$ is a rational form we may choose $\mu_1$ small enough such that all $\mu_1$-quasinull subspaces are null subspaces. Also in this case, replacing $Q$ by a scalar multiple, we may and will assume that $Q$ is a primitive integral form.

Let $T\geq2$ be a fixed number. Recall from theorem~\ref{approximation} that there are two possibilities for quasinull subspaces. The case which requires more study is case (ii) so let us assume we are in this case. Let $Q_T=Q'$ (resp. $Q_T=Q$) if $Q$ is irrational form (resp. if $Q$ is split integral form) where $Q'$ is given as in (ii) of theorem~\ref{approximation}. Let $\qtd$ be the set of all such quasinull subspaces (resp. null subspaces if $Q$ is rational) which are not exceptional subspaces and such that $\afrak^2_\delta(\bullet,\vare)$ is nonempty for them. Let $L\in\qtd$ be such subspace. We will further assume that $L$ is of first type and that $T/2\leq\|v^L\|\leq T.$.

Since $Q_T$ is a split integral form, after possibly multiplying by a scalar bounded by a fixed power of $T^{\tau_2}$, there exists a non singular integral matrix such that $Q_T(v)=B(\mathfrak{p}v).$ Further theorem~\ref{approximation} guarantees that we may choose $\mathfrak{p}$ such that its entries are bounded by a fixed power of $T^{\tau_2}.$ Recall that the null subspaces of $B$ are of two types, based on the fact that the corresponding vector $v^L$ is in $V_1$ or $V_2.$ From now on by a null space of first kind for $B$ we mean


\noindent
\textit{Null subspaces of first type:} These are subspaces which are orbits of $\left(\begin{array}{cc}x_{11} & x_{12}\\ 0 & 0\end{array}\right)$ under $\SL_2\times\SL_2.$


Trough out the rest of the section, by a null subspace we mean a rational null subspace of the first type. Let now $M$ be a rational null subspace of first type for $B$. Such subspaces are characterized as annihilator of primitive row vectors. Hence an integral basis for $M$ is $\left\{\left(\begin{array}{cc}m & 0\\ n & 0\end{array}\right),\left(\begin{array}{cc}0 & m\\ 0 & n\end{array}\right)\right\},$ where $\mbox{gcd}(m,n)=1,$ we will refer to this basis as standard integral basis for $M.$

Recall that $L\in\qtd$ with $T/2\leq\|v^L\|\leq T.$ Since $L$ is a null subspace of $Q_T$ the subspace $M=\pfrak L$ is a null subspace of $B.$ Now let $\{v_1,v_2\}$ be the standard basis for $M$ and let $w_i$ be the unique primitive integral multiple of $\pfrak^{-1}v_i.$ Then $\{w_1,w_2\}$ is a basis for $L.$ Further since $\pfrak$ is an integral matrix whose entries are bounded by a fixed power of $T^{\tau_2}$ we have $T^{1/2-\tau_3}\leq\|w_i\|\leq T^{1/2+\tau_3},$ where $\tau_3$ is a fixed multiple of $\tau_2.$ We will refer to the basis constructed in this way as $\tau_3$-{\it round basis} for $L.$


Fix $0<\eta_2,\vare,\tau_2\ll1$ small parameters. These will be determined later. Recall that $\qfrak\in\SL_4(\bbr)$ is chosen such that $Q(v)=B(\qfrak v)$.\vspace{.5mm}
Now let $L$ be as above, further assume that there is $k_\theta\in p_1(\mathfrak{A}_t^{\qfrak L}(\delta,\vare))$ such that $|E_t^{\qfrak L}(\delta,k_\theta,\vare,\xi)|>\delta^{\eta_2},$ in particular (i) of proposition~\ref{nondivergence} holds for some $\lambda\in\Lambda$.  Fix $\{w_1,w_2\}$ a $\tau_3$-round. Note that $\tau_3$ is fixed when $\tau_2$ is chosen.

Let $\qfrak^{-1}\lambda=v\in\bbz^4.$ Now, using lemma~\ref{smalltrans}, there is a constant $c=c(\qfrak)$ such that $\|(v+\xi)_{L^\perp}\|<\frac{c(\qfrak)\delta^{1-\eta_2}}{T}.$ This and the fact that $L$ is a null subspace of $Q_T$ give
\begin{equation}\label{smallinnerpro}|\langle w_i\hh,v+\xi\rqt|\leq T^{1/2+\tau_3}\cdot\frac{c\delta^{1-\eta_2}}{T}=\frac{c\delta^{1-\eta_2}}{T^{1/2-\tau_3}}\end{equation}
where $c$ is an absolute constant depending on $Q.$ So $\{\langle w_i\hh,\xi\rqt\}\leq\frac{c\delta^{1-\eta_2}}{T^{1/2-\tau_3}},$ where $\{\h\}$ denotes the distance to the closest integer. Let us now collect the result of the above discussion in the following

\begin{lem}\label{onetransestimate}
Let $\vare$ and $\tau_2$ be small and let $L\in\qtd$. Let $k_\theta\in p_1(\afrak_t^{\qfrak L}(\delta, \vare))$ and suppose that $|E_t^{\qfrak L}(\delta,k_\theta,\vare,\xi)|>\delta^{\eta_2}$ with $\eta_2$ as in lemma~\ref{smalltrans}. In particular (i) in proposition~\ref{nondivergence} holds for $\qfrak L$ and some $\lambda\in\Lambda.$ Then $\{\langle w_i\hh,\xi\rqt\}\leq\frac{c\delta^{1-\eta_2}}{T^{1/2-\tau_3}},$ where $\tau_3$ is a fixed multiple of $\tau_2$ as above. Further since $\{w_1,w_2\}$ is the image of standard basis $\{v_1,v_2\}$ of $M=\pfrak L$ then we have $\{\langle v_i\hh,\pfrak\xi\rb\}\leq\frac{c\delta^{1-\eta_2}}{T^{1/2-\tau_3}}.$
\end{lem}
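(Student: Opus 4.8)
The plan is to unwind the geometric setup that has already been established and simply collect the relevant inequalities. We are assuming $|E_t^{\qfrak L}(\delta,k_\theta,\vare,\xi)|>\delta^{\eta_2}$, so by definition of $E_t^L(\delta,k_\theta,\vare,\xi)$ the set $E_t^L(\delta,k_\theta,\vare,\xi)$ is nonempty, which forces case (i) of proposition~\ref{nondivergence} (since case (ii) gives $|E_t^L(\delta,k_\theta,\vare,\xi)|<C\delta^{\eta_1}$ and $\eta_2<\eta_1$, a contradiction once $\delta$ is small). Thus there is a distinguished $\lambda_0\in\Lambda$ as in (i); writing $\lambda_0$ simply as $\lambda$ and noting that $|E_t^{\qfrak L}(\lambda)|=|E_t^{\qfrak L}(\delta,k_\theta,\vare,\xi)|>\delta^{\eta_2}$ in this case, the hypotheses of lemma~\ref{smalltrans} are met. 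Applying lemma~\ref{smalltrans} gives $\|(\lambda+\zeta)_{L^\perp}\|<\tfrac{2r\delta^{1-\eta_2}}{T}$ where $\zeta=\qfrak\xi$; translating back through $\qfrak$, with $v=\qfrak^{-1}\lambda\in\bbz^4$, one gets $\|(v+\xi)_{L^\perp}\|<\tfrac{c(\qfrak)\delta^{1-\eta_2}}{T}$ for a constant depending only on $\qfrak$ (hence on $Q$).

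Next I would use that $L$ is a null subspace of $Q_T$ together with the size bound on the $\tau_3$-round basis. Since $\{w_1,w_2\}$ is a $\tau_3$-round basis for $L$, we have $\|w_i\|\leq T^{1/2+\tau_3}$, so writing $v+\xi=(v+\xi)_L+(v+\xi)_{L^\perp}$ and observing that the $Q_T$-pairing (equivalently the $B$-pairing after applying $\pfrak$) of $w_i$ with the component lying in the null subspace $L$ vanishes — because $L$ is $Q_T$-null and $w_i\in L$ — we conclude $\langle w_i, v+\xi\rangle_{Q_T}=\langle w_i,(v+\xi)_{L^\perp}\rangle_{Q_T}$, and Cauchy--Schwarz with the above bound gives $|\langle w_i,v+\xi\rangle_{Q_T}|\leq T^{1/2+\tau_3}\cdot\tfrac{c\delta^{1-\eta_2}}{T}=\tfrac{c\delta^{1-\eta_2}}{T^{1/2-\tau_3}}$, which is equation~(\ref{smallinnerpro}). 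Since $v\in\bbz^4$ and $w_i$ is an integral vector with $Q_T$ integral (we arranged $Q_T$, hence $Q$ if rational, to be primitive integral), the quantity $\langle w_i,v\rangle_{Q_T}$ is an integer; therefore $\{\langle w_i,\xi\rangle_{Q_T}\}\leq\tfrac{c\delta^{1-\eta_2}}{T^{1/2-\tau_3}}$, where $\{\cdot\}$ is the distance to the nearest integer.

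Finally, for the last sentence of the lemma I would simply transport this statement along $\pfrak$. By construction $w_i$ is the primitive integral multiple of $\pfrak^{-1}v_i$ where $\{v_1,v_2\}$ is the standard integral basis of $M=\pfrak L$, so $\langle w_i,\xi\rangle_{Q_T}$ is, up to the bounded integral scalar relating $w_i$ and $\pfrak^{-1}v_i$, equal to $\langle \pfrak^{-1}v_i,\xi\rangle_{Q_T}=\langle v_i,\pfrak\xi\rangle_B$ (using $Q_T(u)=B(\pfrak u)$, i.e. the associated bilinear forms transform the same way). Absorbing the bounded scalar into the constant $c$ yields $\{\langle v_i,\pfrak\xi\rangle_B\}\leq\tfrac{c\delta^{1-\eta_2}}{T^{1/2-\tau_3}}$, as claimed. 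There is no real obstacle here: this lemma is purely a bookkeeping consolidation of proposition~\ref{nondivergence} and lemma~\ref{smalltrans}. The only points requiring mild care are (a) checking that the hypothesis $|E_t^{\qfrak L}(\delta,k_\theta,\vare,\xi)|>\delta^{\eta_2}$ really does exclude case (ii) of proposition~\ref{nondivergence} for small $\delta$, which is where $\eta_2<\eta_1$ is used, and (b) tracking that all the constants appearing depend only on $Q$ (equivalently $\qfrak$, $\pfrak$) and the fixed data $r,f,\xi$, not on $t$, $\delta$, or $L$ — the entries of $\pfrak$ being bounded by a fixed power of $T^{\tau_2}$ is what keeps the $\tau_3$-exponent loss under control and this has already been recorded in the construction of the $\tau_3$-round basis.
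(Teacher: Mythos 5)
Your proposal follows essentially the same route as the paper's: identify case (i) of Proposition~\ref{nondivergence}, invoke Lemma~\ref{smalltrans}, transfer the bound through $\qfrak$, use the null-subspace property of $L$ with respect to $Q_T$ together with the size of the $\tau_3$-round basis to bound the $Q_T$-pairing, then pass to fractional parts by integrality, and finally move from $w_i$ to $v_i$ via $\pfrak$. The observation that $E_t^{\qfrak L}(\delta,k_\theta,\vare,\xi)=E_t^{\qfrak L}(\lambda_0)$ once (i) holds, which makes the hypothesis of Lemma~\ref{smalltrans} automatic, is correct and a good thing to spell out.

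There is, however, a genuine slip in the last step. You write that $\langle w_i,\xi\rangle_{Q_T}$ equals $\langle v_i,\pfrak\xi\rangle_B$ ``up to the bounded integral scalar'' $\lambda_i$ relating $w_i$ and $\pfrak^{-1}v_i$, and then absorb $\lambda_i$ into the constant $c$ \emph{after} you have already passed to fractional parts. That move is not valid: for an integer $\lambda\geq 2$ the inequality $\{\lambda x\}\leq\epsilon$ says nothing useful about $\{x\}$ (take $\lambda=2$, $x$ near $1/2$). The correct order of operations is to perform the transfer \emph{before} taking fractional parts. Since $w_i\in\bbz^4$ and $\pfrak$ is integral, $\pfrak w_i\in\bbz^4$ lies on the ray through the primitive vector $v_i$, so $\pfrak w_i=\lambda_i v_i$ for a positive integer $\lambda_i$. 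Then
\[
\langle v_i,\pfrak v+\pfrak\xi\rangle_B=\frac{1}{\lambda_i}\,\langle \pfrak w_i,\pfrak v+\pfrak\xi\rangle_B=\frac{1}{\lambda_i}\,\langle w_i,v+\xi\rangle_{Q_T},
\]
and since $\lambda_i\geq 1$ the bound $|\langle w_i,v+\xi\rangle_{Q_T}|\leq c\,\delta^{1-\eta_2}/T^{1/2-\tau_3}$ transfers (with the same constant) to $|\langle v_i,\pfrak v+\pfrak\xi\rangle_B|$. Now $v_i,\pfrak v\in\bbz^4$ and $B$ is integral, so $\langle v_i,\pfrak v\rangle_B\in\bbz$ and the desired fractional-part bound on $\langle v_i,\pfrak\xi\rangle_B$ follows. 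This is what the paper's phrase ``since $\{w_1,w_2\}$ is the image of the standard basis $\{v_1,v_2\}$'' is silently invoking.
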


This lemma brings us to the situation where we can now use the Diophantine property of the vector $\xi.$

As we mentioned in the brief outline following theorem~\ref{approximation}, loc. cit. deals with the Diophantine properties of $Q.$ If $Q$ fails to have desired Diophantine condition $\xi$ needs to be a Diophantine vector thanks to our assumption on $Q_\xi.$ In what follows we will make use of this assumption and control the number of quasinull subspaces for which lemma~\ref{onetransestimate} can hold.

The following is a simple consequence of the definition~\ref{d:farfromrational} i.e. the Diophantine condition. The proof is easy, we include the proof for the sake of completeness also for later references.

\begin{lem}\label{lineardiophantine}
Let $x=(x_1,\cdots,x_n)\in\bbr^n$ be a $\kappa$-Diophantine vector. Then for any $\kappa'>(n+1)\kappa+1$ we have: For all $\beta>0$ and all $0<\gamma\ll1,$ if $A\in{\rm{GL}}_n(\bbq)$ is such that $\max_{ij}\{|A_{ij}|,|(A^{-1})_{ij}|\}<\gamma^{-\beta},$ then $A\hh x$ is $(\kappa'\beta)$-Diophantine.
\end{lem}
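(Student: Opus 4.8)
The statement I want to prove says: if $x\in\bbr^n$ is $\kappa$-Diophantine and $A\in{\rm GL}_n(\bbq)$ has all entries of $A$ and $A^{-1}$ bounded by $\gamma^{-\beta}$, then $Ax$ is $(\kappa'\beta)$-Diophantine for any $\kappa'>(n+1)\kappa+1$. The plan is to assume, for contradiction, that $Ax$ is \emph{not} $(\kappa'\beta)$-Diophantine; that is, for every constant $C'>0$ there is some small $\delta'\in(0,1)$ and a rational vector $p'/q'=(p'_1/q'_1,\dots,p'_n/q'_n)$ with $\max_i|q'_i|<1/\delta'$ and $\max_i|(Ax)_i-p'_i/q'_i|\le C'(\delta')^{\kappa'\beta}$. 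I would then push this rational approximation of $Ax$ back through $A^{-1}$ to produce a rational approximation of $x$ and show that it contradicts the $\kappa$-Diophantine property of $x$, provided $C'$ was chosen small enough.

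\textbf{Key steps.} First, clear denominators: let $Q'=\mathrm{lcm}(q'_1,\dots,q'_n)$, so $Q'<(1/\delta')^n$ and $p'/q'=m'/Q'$ for an integer vector $m'$. Second, apply $A^{-1}$: write $A^{-1}=\frac{1}{D}N$ where $N$ is an integer matrix and $D$ is a common denominator of the entries of $A^{-1}$; both $D$ and the entries of $N$ are controlled by the hypothesis $\max_{ij}\{|A_{ij}|,|(A^{-1})_{ij}|\}<\gamma^{-\beta}$ — indeed $|D|$ and $\|N\|$ are at most a fixed power (depending on $n$) of $\gamma^{-\beta}$. Then $A^{-1}(m'/Q') = N m'/(DQ')$ is a rational vector; call its common denominator $Q''$, which is bounded by $DQ'$, hence by $\gamma^{-\beta n}\cdot\delta'^{-n}$ up to a constant, i.e. by some power $\gamma^{-Cn\beta}\delta'^{-n}$. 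Third, estimate the approximation error: since $x = A^{-1}(Ax)$, we have
\[
\Big\|x - A^{-1}\tfrac{m'}{Q'}\Big\| = \Big\|A^{-1}\big(Ax - \tfrac{m'}{Q'}\big)\Big\| \le \|A^{-1}\|_{\mathrm{op}}\cdot C'(\delta')^{\kappa'\beta} \le n\gamma^{-\beta}C'(\delta')^{\kappa'\beta}.
\]
Fourth, package this as a genuine instance of the $\kappa$-Diophantine definition for $x$: set $\delta'' = (\text{bound on }Q'')^{-1}$, so $\max_i|q''_i|<1/\delta''$, and check that the error $n\gamma^{-\beta}C'(\delta')^{\kappa'\beta}$ is smaller than $C(\delta'')^{\kappa}$ — where $C=C(x)$ is the constant from the $\kappa$-Diophantine property of $x$ — for $\gamma$ (equivalently $\delta'$) small enough and $C'$ chosen appropriately. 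The arithmetic here is: $\delta''$ is comparable to $\gamma^{Cn\beta}(\delta')^{n}$, so $(\delta'')^{\kappa}$ is comparable to $\gamma^{Cn\beta\kappa}(\delta')^{n\kappa}$, while the error carries a factor $(\delta')^{\kappa'\beta}$ and only one power $\gamma^{-\beta}$; the inequality $\kappa'\beta > n\kappa\beta + \beta$, which is exactly the hypothesis $\kappa'>(n+1)\kappa+1$ rewritten (using that one may also absorb the $\gamma^{Cn\beta\kappa}$ factor since $\gamma<1$ makes it $\le 1$, and the $\gamma^{-\beta}$ on the error side is the only adverse $\gamma$-power, handled by the ``$+1$''), guarantees the error beats $C(\delta'')^\kappa$ once $\delta'$ is below a threshold depending only on $x$, $n$, $\kappa$, $\kappa'$, $\beta$. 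This contradicts $x$ being $\kappa$-Diophantine, completing the proof.

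\textbf{Expected main obstacle.} The only real subtlety is the bookkeeping of exponents in Step 4 — making sure that the loss incurred by replacing the denominator bound $1/\delta'$ for $Ax$ by the inflated denominator bound $\delta'' \asymp \gamma^{Cn\beta}(\delta')^n$ for $x$ is exactly compensated by the gain $(\delta')^{\kappa'\beta}$ in the error term, and that the stray factor $\gamma^{-\beta}$ from $\|A^{-1}\|_{\mathrm{op}}$ is absorbed by the extra ``$+1$'' in the condition on $\kappa'$. One must be careful that all implied constants (the power of $n$ in bounding $D$ and $\|N\|$, the constant relating $\|\cdot\|$ and coordinatewise max, the constant $C'$ we get to choose) depend only on $n$, $x$, $\kappa$, $\kappa'$, $\beta$ and not on $\gamma$ or $A$; this is where writing $A^{-1}=\frac1D N$ with explicit size bounds, rather than manipulating $A^{-1}$ abstractly, keeps things honest. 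There is no deep idea here — it is a transference argument — so the write-up should be short, with the exponent inequality $\kappa'\beta>n\kappa\beta+\beta$ as the crux.
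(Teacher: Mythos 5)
Your overall strategy---push a good rational approximation of $Ax$ back through $A^{-1}$ and contradict the $\kappa$-Diophantine property of $x$---is exactly the paper's transference argument. But the exponent bookkeeping in Step 4, which you rightly flag as the crux, does not close. In your own display the power of $\delta'$ in the lower bound $(\delta'')^\kappa \asymp \gamma^{Cn\beta\kappa}(\delta')^{n\kappa}$ is $n\kappa$, with no factor of $\beta$, while the power of $\delta'$ in the error term is $\kappa'\beta$; so the honest comparison of $\delta'$-exponents is $\kappa'\beta$ versus $n\kappa$, not the ``$\kappa'\beta > n\kappa\beta + \beta$'' you write. (That inequality, in any case, reduces to $\kappa' > n\kappa + 1$, which is \emph{not} the hypothesis $\kappa' > (n+1)\kappa+1$; a summand $\kappa$ has gone missing.) More seriously, the real comparison $\kappa'\beta > n\kappa$ genuinely fails as soon as $\beta < n\kappa/\kappa'$: letting $\delta'\to 0$, the error bound $\gamma^{-\beta}C'(\delta')^{\kappa'\beta}$ then eventually dominates $\gamma^{Cn\kappa\beta}(\delta')^{n\kappa}$ no matter how small $C'$ is chosen, and no contradiction is reached. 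Since the lemma is asserted for all $\beta>0$, this is a gap, not a constant you can absorb.

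The paper's proof sidesteps this by not letting $\delta'$ roam: it only tests the Diophantine inequality for $Ax$ against rational vectors with $\max_i|q_i| < \gamma^{-\beta}$, i.e.\ at the single scale $\delta' = \gamma^\beta$, so that every quantity is a power of $\gamma$ alone. Then the denominators of $v' = A^{-1}v$ are $\lesssim \gamma^{-(n+1)\beta}$, the $\kappa$-Diophantine lower bound for $x$ at that scale is $\gtrsim \gamma^{(n+1)\kappa\beta}$, the error is $\lesssim \|A^{-1}\|\,\gamma^{\kappa'\beta} \lesssim \gamma^{(\kappa'-1)\beta}$, and the resulting requirement $(\kappa'-1)\beta > (n+1)\kappa\beta$ is exactly $\kappa' > (n+1)\kappa+1$. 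If you fix $\delta' = \gamma^\beta$ from the outset---which is indeed the scale at which remark~\ref{r;lineardiophantine} invokes the lemma---and track only powers of $\gamma$ against the paper's denominator bound, your argument reduces to theirs.
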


\begin{proof}
Note that the inverse matrix $A^{-1}$ is a rational matrix whose entries have denominators bounded by $C_1\gamma^{-n\beta}$ where $C_1$ is an absolute constant depending on the norm on $\bbr^n.$ Let $v=(\frac{p_1}{q_1},\cdots,\frac{p_n}{q_n})$ be any rational vector with $\max_i|q_i|<\gamma^{-\beta}$ such that $\max_i|(Ax-v)_i|<C'\gamma^{\kappa'\beta}.$ The vector $v'=A^{-1}v=(\frac{r_1}{s_1},\cdots,\frac{r_n}{s_n})$ is a rational vector with $\max_i|s_i|<C_2\gamma^{-(n+1)\beta}$ where $C_2$ is an absolute constant. We have
\begin{equation}\label{farfromrational2}\max_i|(x-v')_i|\leq\|A^{-1}\|\|Ax-v\|<C_3\gamma^{(\kappa'-1)\beta}\h\h\mbox{and}\h\h|s_i|<C_2\gamma^{-(n+1)\beta}\end{equation}
Now if one takes $\kappa'>(n+1)\kappa+1$ one gets a contradiction.
\end{proof}

\begin{remark}\label{r;lineardiophantine} Arguing as in the proof of lemma~\ref{lineardiophantine} we can prove the following; For any $\tau_1>0$ we can choose $0<\tau=\tau(\tau_1)$ such that if $\tau_2$ in theorem~\ref{approximation} is less than $\tau$ then for all $T>2$ we have: If $\max_i|(\pfrak\xi)_i-\frac{p_i}{q_i}|< C(\xi)T^{-2\kappa\tau_1}$ then $|q_i|>T^{\tau_1}$ for all $1\leq i\leq4.$
\end{remark}


As before, let $\{w_1,w_2\}$ be a basis for $L$ which is the image of the standard basis $\{v_1,v_2\}$ of $M=\pfrak L.$ Replacing $\tau_3$ by $2\tau_3$ if necessary, we have $T^{1-\tau_3}\leq\|v_i\|^2=m^2+n^2\leq T^{1+\tau_3}.$ The following is a consequence of lemma~\ref{onetransestimate} and remark~\ref{r;lineardiophantine}.

\begin{prop}\label{fewbadsubspaces}
Let $\eta_1$ be as in proposition~\ref{nondivergence} and let $\eta_2<\eta_1.$ Then for any $0<\tau_1\ll1$ we can choose $\tau_2$ small enough such that the number of quasinull subspaces $L$ in $\qtd$ with $T/2\leq\|v^L\|\leq T$ for which there exists some $k_\theta\in p_1(\afrak_t^{\qfrak L}(\delta,\vare))$ such that $|E_t^{\qfrak L}(\delta,k_\theta,\vare,\xi)|>\delta^{\eta_2}$ is $O(T^{1-\tau_1}).$
\end{prop}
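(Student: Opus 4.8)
The plan is to combine the structural input from Theorem~\ref{approximation} with the Diophantine estimate coming out of Lemma~\ref{onetransestimate} and the linear‑Diophantine principle of Lemma~\ref{lineardiophantine} (in the sharpened form of Remark~\ref{r;lineardiophantine}). Fix $\tau_1>0$ and choose $\tau_2$ so small that all the multiples $\tau_3$ of $\tau_2$ appearing below, as well as the hypotheses of Remark~\ref{r;lineardiophantine}, are satisfied; we must keep in mind that $\eta_2<\eta_1<1/4$ is fixed first and $\tau_2$ (hence $\tau_3$) is chosen afterwards, so that $\tfrac{1-\eta_2}{2}$ and similar exponents stay bounded away from the error exponents $2\kappa\tau_1$, etc. Recall we are in case (ii) of Theorem~\ref{approximation}: there is a split integral $Q_T=Q'$, with coefficients bounded by a fixed power of $T^{\tau_2}$, and $\pfrak$ integral with entries bounded by a fixed power of $T^{\tau_2}$ such that $Q_T(v)=B(\pfrak v)$; all but $O(T^{1-\tau_2})$ of the quasinull subspaces of norm between $T/2$ and $T$ are null subspaces of $Q_T$. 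Those $O(T^{1-\tau_2})=O(T^{1-\tau_1})$ exceptional ones are harmless, so I would restrict attention to $L\in\qtd$ that are null subspaces of $Q_T$ of the first type with $T/2\le\|v^L\|\le T$, and for which $|E_t^{\qfrak L}(\delta,k_\theta,\vare,\xi)|>\delta^{\eta_2}$ for some $k_\theta\in p_1(\afrak_t^{\qfrak L}(\delta,\vare))$.

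For such an $L$, Lemma~\ref{onetransestimate} tells us that, writing $\{v_1,v_2\}$ for the standard integral basis of $M=\pfrak L$, we have $\{\langle v_i,\pfrak\xi\rb\}\le c\,\delta^{1-\eta_2}\,T^{-(1/2-\tau_3)}$ for $i=1,2$. I would then unwind this into a genuine rational‑approximation statement for the vector $\pfrak\xi\in\bbr^4$. Since $M$ is a first‑type null subspace of $B$, its standard basis is $\{(\begin{smallmatrix}m&0\\ n&0\end{smallmatrix}),(\begin{smallmatrix}0&m\\ 0&n\end{smallmatrix})\}$ with $\gcd(m,n)=1$ and $m^2+n^2$ comparable to $T$ (up to $T^{\pm\tau_3}$). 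The two inner‑product conditions $\{\langle v_i,\pfrak\xi\rb\}$ small say precisely that two explicit integer linear combinations (with coefficients $m,n$) of the coordinates of $\pfrak\xi$ are within $c\,\delta^{1-\eta_2}T^{-(1/2-\tau_3)}$ of integers; since $\gcd(m,n)=1$, a standard Bézout/change‑of‑basis argument recovers, for each coordinate block, a rational approximation $p_j/q_j$ to $(\pfrak\xi)_j$ with $|q_j|$ a bounded multiple of $m^2+n^2\asymp T$, hence $|q_j|\le T^{1+\tau_3}<1/\delta'$ for a suitable scale, and with error $\max_j|(\pfrak\xi)_j-p_j/q_j|$ of size at most $c\,\delta^{1-\eta_2}T^{\tau_3-1/2}/( m^2+n^2)\ll T^{-3/2+O(\tau_3)}$ — in particular $\ll C(\xi)T^{-2\kappa\tau_1}$ once $\tau_1$ is small relative to the absolute exponents. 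By Lemma~\ref{lineardiophantine}/Remark~\ref{r;lineardiophantine}, $\pfrak\xi$ is Diophantine (with exponent a controlled multiple of $\tau_2$, using that $\pfrak$ and $\pfrak^{-1}$ have entries bounded by powers of $T^{\tau_2}$), so this forces $|q_j|>T^{\tau_1}$ for every $j$.

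The upshot is a lower bound $|q_j|>T^{\tau_1}$ on the denominators arising from $L$, which I would now turn into a counting bound. The pair $(m,n)$ (equivalently the subspace $M$, equivalently $L=\pfrak^{-1}M$) is determined by the data $(p_1,q_1,\dots,p_4,q_4)$ up to the bounded ambiguity in the Bézout reconstruction; conversely, $(m,n)$ with $m^2+n^2\asymp T$ and $\{\langle v_i,\pfrak\xi\rb\}$ of the stated size pins $p_j/q_j$ to an interval of length $\ll T^{-3/2+O(\tau_3)}$ around $(\pfrak\xi)_j$, and the admissible $q_j$ lie in the dyadic‑type range $T^{\tau_1}<|q_j|\le T^{1+\tau_3}$. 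Counting Farey fractions: the number of reduced $p_j/q_j$ in an interval of length $\ell$ with denominators up to $Q$ is $O(\ell Q^2+ \log Q)$, and here $\ell Q^2\ll T^{-3/2+O(\tau_3)}T^{2+2\tau_3}=T^{1/2+O(\tau_3)}$. This already gives, per coordinate, $O(T^{1/2+O(\tau_3)})$ choices; but one must be more careful to get below $T^{1-\tau_1}$ overall rather than a power of that per coordinate. The clean way, which I expect to be the main obstacle and the real content, is to use the lower bound $|q_j|>T^{\tau_1}$ together with the primitivity of $(m,n)$ to see that the integer vector $v^M$ (hence $v^L$) is constrained to lie on a sublattice / in a union of few residue classes of index $\gg T^{\tau_1}$ inside the lattice of norm-$\le T$ vectors, or alternatively to bound directly the number of $(m,n)$ for which the two near-integrality conditions on $\langle v_i,\pfrak\xi\rb$ can simultaneously hold — a "few solutions to a Diophantine inequality" count whose saving is exactly the $T^{-\tau_1}$ extracted from the denominator bound. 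Carrying this out gives at most $O(T^{1-\tau_1})$ such $M$, hence at most $O(T^{1-\tau_1})$ such $L$; adding back the $O(T^{1-\tau_2})=O(T^{1-\tau_1})$ subspaces excluded by Theorem~\ref{approximation}(ii), and noting that in case (i) of that theorem there are only $O(T^{1-\tau_2})$ quasinull subspaces to begin with, finishes the proof. Throughout, $\tau_2$ (and the derived $\tau_3$, the exponents in Lemma~\ref{lineardiophantine}, and $\rho$) are chosen at the end, small enough in terms of $\tau_1$, $\kappa$, $\eta_1$, $\eta_2$, $\vare$, to absorb all the $O(\tau_3)$ losses into the $T^{1-\tau_1}$ bound.
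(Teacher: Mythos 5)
Your outer framing is the same as the paper's: invoke Theorem~\ref{approximation} to throw away $O(T^{1-\tau_2})$ quasinull subspaces, reduce to null subspaces of the split rational $Q_T$ with standard basis $v_1=(m,0,n,0)$, $v_2=(0,m,0,n)$ and $m^2+n^2\asymp T$, and use Lemma~\ref{onetransestimate} to get the two near-integrality conditions $\{\langle v_i,\pfrak\xi\rb\}\le c\delta^{1-\eta_2}T^{-(1/2-\tau_3)}$. After that, however, there is a genuine gap.

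The B\'ezout step does not work. The two conditions you have from a single $L$ are $\{m(\pfrak\xi)_4-n(\pfrak\xi)_2\}$ small and $\{n(\pfrak\xi)_1-m(\pfrak\xi)_3\}$ small: \emph{one} linear form in the pair $((\pfrak\xi)_2,(\pfrak\xi)_4)$ and \emph{one} in $((\pfrak\xi)_1,(\pfrak\xi)_3)$. That is an underdetermined $1\times2$ system per block, and no change of basis or B\'ezout relation with $\gcd(m,n)=1$ will extract from it a rational approximation $p_j/q_j$ to each individual coordinate $(\pfrak\xi)_j$, let alone one with denominator $\ll T$ and error $\ll T^{-3/2+O(\tau_3)}$. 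Consequently the Diophantine property of $\pfrak\xi$ cannot be brought to bear against a single subspace, and the denominator lower bound $|q_j|>T^{\tau_1}$ you want to deduce simply does not emerge at this stage.

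The counting step is exactly where you say you expect ``the main obstacle and the real content,'' and indeed the missing ingredient is the paper's pigeonhole argument on \emph{pairs} of subspaces, which circumvents the underdetermination above. Tile the range of $(m,n)$ into boxes of side $T^{\tau'_1}$. If two valid primitive vectors $(m,n)$ and $(m',n')$ lie in the same box, then $(a,b)=(m-m',n-n')$ has $\|(a,b)\|\ll T^{\tau'_1}$, and subtracting the two pairs of near-integrality conditions shows that $a(\pfrak\xi)_1-b(\pfrak\xi)_3$ and $b(\pfrak\xi)_4-a(\pfrak\xi)_2$ are within $O(T^{-8\kappa\tau'_1})$ of rationals with denominators $<T^{\tau'_1}$. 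Here, unlike in your one-subspace attempt, two linearly independent such small vectors $(a,b)$ would let one invert a genuine $2\times2$ system (determinant nonzero and $\le T^{2\tau'_1}$), producing rational approximations to the coordinates of $\pfrak\xi$ of quality incompatible with Lemma~\ref{lineardiophantine}/Remark~\ref{r;lineardiophantine}; hence up to sign there is at most one admissible primitive $(a,b)$, and each box contains $O(1)$ valid $(m,n)$. This gives $O(T^{1+\tau_3-\tau'_1})$ valid subspaces and yields the proposition with $\tau_1=\tau'_1/2$ (choosing $\tau_2$, and hence $\tau_3$, small enough relative to $\tau'_1$). Your Farey-fraction count in a single interval is not the right tool here: it lives on the wrong side of the underdetermination, and would in any case only give a per-coordinate bound rather than the joint saving that the box argument delivers.
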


\begin{proof}
We may as we will assume that $L$ is of first type. We continue to use the notations as in lemma~\ref{onetransestimate} in particular let $M=\pfrak L$ and let $\{v_1,v_2\}$ be the standard basis for $M.$ Then loc. cit. implies that $\{\langle v_i,\pfrak\xi\rb\}\leq\frac{c\delta^{1-\eta_2}}{T^{1/2-\tau_3}}.$ Recall that $v_1=(m,0,n,0)$ and $v_2=(0,m,0,n)$ where $\mbox{gcd}(m,n)=1$ and we have
\begin{equation}\label{normround}T^{1-\tau_3}\leq \|v^M\|=m^2+n^2\leq T^{1+\tau_3},\end{equation}

where $\tau_3$ is a fixed  multiple of the constant $\tau_2$ appearing in theorem~\ref{approximation}. We also note that $\langle v_1\hh,\pfrak\xi\rb=m(\pfrak\xi)_4-n(\pfrak\xi)_2$ and $\langle v_2\hh,\pfrak\xi\rb(\pfrak\xi)_1-m(\pfrak\xi)_3.$

Let $\tau'_1\ll\frac{1}{32\kappa}$ be chosen. Then choose $\tau_2$ in theorem~\ref{approximation} such that $\tau_2<\tau(\tau'_1),$ where $\tau(\tau')$ is given as in remark~\ref{r;lineardiophantine}. Hence we have; If $\max_i|(\pfrak\xi)_i-\frac{p_i}{q_i}|< C(\xi)T^{-2\kappa\tau'_1}$ then $|q|>T^{\tau'_1}.$ Taking $\tau_2$ even smaller we may and will assume that $\tau_3<\tau'_1/3.$

Now by lemma~\ref{lineardiophantine} we see that there is at most one primitive vector $(a,b)$ with $\|(a,b)\|<T^{\tau'_1}$ such that;
$\mbox{There exists}\h\h(\frac{p_1}{q_1},\frac{p_2}{q_2})\in\bbq^2\hspace{3mm}\mbox{with}\h\h\h|q_i|<T^{\tau'_1}\h\h\mbox{and}$
\begin{equation}\label{farfromrational2}\h\h\begin{array}{c}|a(\pfrak\xi)_1-b(p\frak\xi)_3-\frac{p_1}{q_1}|<C'T^{-8\kappa\tau'_1}\\
|b(\pfrak\xi)_4-a(p\frak\xi)_2-\frac{p_1}{q_1}|<C'T^{-8\kappa\tau'_1}\end{array}\end{equation}
Where $C'=C'(\xi)$ is an absolute constant depending on. If we choose $\delta$ small enough our assumption in the proposition and our choices of $\tau'_1$ and $\tau_2$ imply that
\begin{equation}\label{e:fewbadsubs}\{\langle v_i,\pfrak\xi\rb\}\leq\frac{C'T^{-8\kappa\tau'_1}}{3}\h\h\mbox{for}\h\h i=1,2\end{equation}
Divide the range $T^{1-\tau_3}\leq \|(m,n)\|\leq T^{1+\tau_3}$ into boxes of size $T^{\tau'_1}\times T^{\tau'_1}$. Now~(\ref{farfromrational2}) implies that; From each of these boxes there is at most one primitive integral vector $(m,n)$ which can satisfy~(\ref{e:fewbadsubs}). Hence the number of $(m,n)$ such that~(\ref{e:fewbadsubs}) holds is $O(T^{1+\tau_3-\tau'_1}).$ The conclusion of the proposition thus holds with $\tau_1=\tau'_1/2.$
\end{proof}

Let us now summarize, in the following corollary, what we proved in this section. This is the precise formulation of what we called ``{\it our goal}\h" in the beginning of this section.

\begin{cor}\label{powersaving}
There are positive constants $\eta$ and $\tau$ and also constants $C_1,C_2>0$ which depend on $Q_\xi$ such that, if $0<\delta\ll1$ is small enough then for all $T>2$ the number of quasinull subspaces $L$ in $\qtv$ with $T/2\leq\|v^L\|\leq T$ for which we have $|I_t^{\pi_2(\qfrak L)}(\delta)|>C_1\delta^{\eta}$ is at most $C_2T^{1-\tau}.$
\end{cor}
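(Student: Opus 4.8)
The plan is to assemble Corollary~\ref{powersaving} directly from the machinery built up in this section, treating separately the three ways in which a quasinull subspace $L$ (with $T/2\le\|v^L\|\le T$) can contribute a large set $I_t^{\pi_2(\qfrak L)}(\delta)$. First I would recall that by Lemma~\ref{intervals}(iii), for our fixed small $\eta$, either $|I_t^{\pi_2(\qfrak L)}(\delta)|\le\delta^\eta$ — in which case $L$ does not count toward the estimate — or $\|\pi_2(v^L)\|\le\delta^{1-2\eta}/e^t$, so that the whole interval $I_t^{\pi_1(\qfrak L)}(\delta)\times\mathrm{SO}(2)$ is essentially realized; combined with the observation following \eqref{thinsets} that $\afrak_t^L(\delta)$ is the relevant set of $k$'s (after we have already thrown away the contribution of $\bfrak_t(\delta)$ via Theorem~\ref{notquasinull} and of $\alpha_{13}$ via Theorem~\ref{alpha13}), a large $|I_t^{\pi_2(\qfrak L)}(\delta)|$ forces the conditional set $E_t^{\qfrak L}(\delta,k_\theta,\vare)$ to be large for some $k_\theta\in p_1(\afrak_t^{\qfrak L}(\delta,\vare))$, and via Proposition~\ref{nondivergence} it forces the ``single translate'' alternative (i), with $|E_t^{\qfrak L}(\delta,k_\theta,\vare,\xi)|>\delta^{\eta_2}$ for a suitable $\eta_2<\eta_1$; the alternative (ii) is excluded because it gives the bound $C\delta^{\eta_1}$ which, for $\delta$ small, is below any threshold $C_1\delta^\eta$ we care about once $\eta<\eta_1$.

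Next I would invoke Theorem~\ref{approximation} with parameters $\rho,\tau_2$ to be fixed. If alternative (i) of that theorem holds (automatic when $Q$ is not EWAS), the number of quasinull subspaces of norm in $[T/2,T]$ is already $O(T^{1-\tau_2})$ and we are done with $\tau=\tau_2$. So assume alternative (ii): there is a split integral $Q'=Q_T$ with coefficients bounded by a power of $T^{\tau_2}$, all but $O(T^{1-\tau_2})$ of the relevant quasinull subspaces are null subspaces of $Q_T$, and we may restrict to those. For such $L$, Lemma~\ref{onetransestimate} applies: writing $M=\pfrak L$ with standard integral basis $\{v_1,v_2\}$, we get $\{\langle v_i,\pfrak\xi\rangle_B\}\le c\delta^{1-\eta_2}/T^{1/2-\tau_3}$. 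I would then run exactly the counting argument of Proposition~\ref{fewbadsubspaces}: choose $\tau_1'\ll 1/(32\kappa)$, shrink $\tau_2$ so that $\tau_2<\tau(\tau_1')$ (Remark~\ref{r;lineardiophantine}) and $\tau_3<\tau_1'/3$, apply Lemma~\ref{lineardiophantine} to the rational change of variables $\pfrak$ (whose entries and inverse entries are bounded by powers of $T^{\tau_2}$) to conclude $\pfrak\xi$ is Diophantine with a controlled exponent, and deduce that in each $T^{\tau_1'}\times T^{\tau_1'}$ box of the range $T^{1-\tau_3}\le m^2+n^2\le T^{1+\tau_3}$ there is at most one primitive $(m,n)$ satisfying the near-integrality condition \eqref{e:fewbadsubs}. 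That yields $O(T^{1+\tau_3-\tau_1'})=O(T^{1-\tau_1'/2})$ such subspaces, giving the corollary with $\tau=\tau_1'/2$ and $\eta$ chosen less than $\min(\eta_1,1/4)$; the constants $C_1,C_2$ absorb the various $\vare$-, $\tau_2$- and $\qfrak$-dependent factors.

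One should be careful with bookkeeping: the corollary is stated for $L\in\qtv$ (not $\qtd$), for $I_t^{\pi_2(\qfrak L)}(\delta)$ rather than for $\atld$, and with no mention of $k_\theta$ — so the opening move must be to pass from a large $\pi_2$-interval to a large $E_t$-set uniformly over the $k_\theta$'s parametrizing it, and to absorb the ``good'' subspaces (those with small $\pi_2$-interval, or for which alternative (ii) of Proposition~\ref{nondivergence} or the $\delta^{\eta_2}$-small case of \eqref{e:nondivergence(i)} holds) into the trivial bound. One also has to handle subspaces of the second type by the symmetric argument (or by the fixed ordering convention on $(V_1,V_2)$), and note Corollary~\ref{smallsubspaceset} disposes of subspaces with $\|v^L\|$ bounded independently of $T$, so that for $T$ large the count over $[T/2,T]$ is unaffected by them; since for us $\|v^L\|\ge T/2$ this is automatic. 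The main obstacle is really the one already resolved by the lemmas above — controlling the single-translate case (i) of Proposition~\ref{nondivergence}, i.e. turning the Diophantine hypothesis on $\xi$ into a genuine counting bound — so the work here is to verify that every hypothesis of Lemmas~\ref{smalltrans},~\ref{onetransestimate} and Proposition~\ref{fewbadsubspaces} is met with the uniform choices of $\eta,\vare,\tau_1,\tau_2,\tau_3,\eta_2$ dictated by the chain of constraints $\eta_2<\eta_1<1/4$, $\eta<\eta_1$, $0<\vare<1/12$, $\tau_1'\ll 1/(32\kappa)$, $\tau_3<\tau_1'/3$, $\tau_2<\tau(\tau_1')$, and that these choices depend only on $Q_\xi$ (through $\kappa$ and the implied constants of Theorem~\ref{approximation}), not on $T$, $t$ or $\delta$.
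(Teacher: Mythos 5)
Your proposal is correct and takes essentially the same route as the paper: the paper's own proof of Corollary~\ref{powersaving} is simply to combine Proposition~\ref{fewbadsubspaces} (bounding by $O(T^{1-\tau_1})$ the $L\in\qtd$ for which $|E_t^{\qfrak L}(\delta,k_\theta,\vare,\xi)|>\delta^{\eta_2}$ for some $k_\theta$) with Theorem~\ref{approximation} (bounding by $O(T^{1-\tau_2})$ the quasinull subspaces with $\atld\neq\emptyset$ that lie outside $\qtd$), and set $\eta=\eta_1/2$, $\tau=\min\{\tau_1,\tau_2\}$. What you add beyond the paper's four-line proof — the explicit passage from a large $|I_t^{\pi_2(\qfrak L)}(\delta)|$ to a large $E_t$-set via Lemma~\ref{intervals}(iii) and Proposition~\ref{nondivergence}, and the re-derivation of Proposition~\ref{fewbadsubspaces} rather than citing it — is bookkeeping the paper leaves implicit; it is a reasonable expansion, not a different method.
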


\begin{proof}
Let $\eta_1$ be as in proposition~\ref{nondivergence} and let $\eta_2<\eta_1.$ Using proposition~\ref{fewbadsubspaces} there exists $\tau_1,\tau_2$ such that the number of $L\in\qtd$ with $T/2\leq\|v^L\|\leq T$ for which we have $|E_t^{\qfrak L}(\delta,k_\theta,\vare,\xi)|>\delta^{\eta_2}$ for some $k_\theta\in p_1(\afrak_t^{\qfrak L}(\delta,\vare)),$ is $O(T^{1-\tau_1}).$ On the other hands using theorem~\ref{approximation} we see that the number of quasinull subspaces with $T/2\leq\|v^L\|\leq T$ and $\atld\neq\emptyset$ which are not in $\qtd$ is $O(T^{1-\tau_2}).$ The corollary follows with $\eta=\eta_1/2$ and $\tau=\min\{\tau_1,\tau_2\}$.
\end{proof}


\section{Proof of theorem~\ref{unboundedequi(2,2)}.}\label{sec;main}
We will complete the proof of theorem~\ref{unboundedequi(2,2)} in this section. Before proceeding to the proof we need the following two statements.

\begin{lem}\label{exceptional}
If $Q_\xi$ is an irrational $(2,2)$ form. Then the number of $2$-dimensional null subspaces, say $L$, of $Q$ such that $\xi\in v+L$ for some $v\in\bbz^4$ is at most four.
\end{lem}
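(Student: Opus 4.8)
The plan is to combine the two-ruling description of the null subspaces of an $O(2,2)$-form with the known bound on the number of \emph{rational} null subspaces of an irrational form; as everywhere else in the paper, the count is over rational null subspaces $L$, i.e.\ over the $2$-planes underlying exceptional affine subspaces of $Q_\xi$.

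First I would record the relevant geometry. By Lemma~\ref{linearalgebra}(iv) a $2$-plane $L$ is null for $Q$ precisely when $v^L\in V_1$ or $v^L\in V_2$, so null subspaces split into those of first type ($v^L\in V_1$) and those of second type ($v^L\in V_2$); since $v^L$ is decomposable and $Q^{(6)}|_{V_i}$ is a nondegenerate ternary form of signature $(2,1)$, each family is parametrized by the isotropic lines of such a form. The elementary fact I would isolate is that \emph{two distinct null subspaces of the same type meet only at the origin}: if $0\neq u\in L_1\cap L_2$ with $L_1,L_2$ of first type, then $v^{L_i}$ is a nonzero multiple of $u\wedge y_i$, hence $Q^{(6)}(v^{L_1},v^{L_2})=v^{L_1}\wedge v^{L_2}=0$, and two mutually orthogonal isotropic vectors in the signature-$(2,1)$ space $V_1$ must be proportional, forcing $L_1=L_2$. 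Consequently, for distinct same-type $L_1,L_2$ one has $L_1\oplus L_2=\bbr^4$.

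The main step is the following dichotomy. Suppose $L_1\neq L_2$ are rational null subspaces of the same type that are both exceptional, so there are $v_i\in\bbz^4$ and $w_i\in L_i$ with $\xi=v_i+w_i$. Then $v_1-v_2=w_2-w_1$, and since $\bbr^4=L_1\oplus L_2$ is a decomposition into $\bbq$-rational subspaces, the projection onto $L_1$ along this sum is a $\bbq$-rational linear map; applying it to $v_1-v_2\in\bbz^4$ and using uniqueness of the decomposition gives $-w_1\in\bbq^4$, hence $\xi=v_1+w_1\in\bbq^4$. Thus if $\xi$ is irrational there is at most one exceptional rational null subspace of each type, so at most two in total. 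If instead $\xi\in\bbq^4$, then $Q$ cannot be proportional to a rational form (otherwise $Q_\xi$ would be, contradicting its irrationality), so $Q$ is an irrational $(2,2)$-form, and the result of~\cite{EMM2} recalled in the introduction gives at most four rational null subspaces of $Q$, a fortiori at most four exceptional ones. In both cases the bound is $4$.

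The step I expect to be the real content — and the main place where care is needed — is the same-type argument of the previous paragraph: one must verify that exceptionality of two distinct same-type rational null subspaces genuinely forces $\xi$ to be rational, which rests on uniqueness of the direct-sum decomposition together with the fact that the projection operators attached to a $\bbq$-rational direct sum are themselves $\bbq$-rational. Everything else is either the cited EMM2 count or the standard geometry of the isotropic Grassmannian of $O(2,2)$, so I do not anticipate further difficulty.
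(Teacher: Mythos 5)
Your proof is correct and follows essentially the same line as the paper's: both hinge on the observation that two distinct rational null $2$-planes of the same type are transversal, so that exceptionality of two same-type planes imposes rational linear conditions on $\xi$ that force $\xi\in\bbq^4$, and then appeal to the bound of four rational null subspaces of an irrational $(2,2)$-form from~\cite{EMM2}. The only cosmetic differences are that the paper organizes the dichotomy by first reducing to $Q$ rational via~\cite[Lemma 10.3]{EMM2} and then pairs $\xi$ against the $B$-duals of integral bases of $L_1,L_2$, whereas you split on the rationality of $\xi$ and use the $\bbq$-rationality of the projections attached to the direct sum $L_1\oplus L_2$; these are two ways of phrasing the same linear-algebra step, so the arguments are substantively identical.
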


\begin{proof}
First note that using lemma 10.3 in~\cite{EMM2} we may and will assume that $Q$ is a rational form. In this case we will actually show there are at most two such subspaces. In order to see this suppose that there are two null rational subspaces of the same type, say $L_i$ for $i=1,2,$ such that $\xi\in v_i+L_i$ where $v_i\in\bbz^4.$ Now since $L_i$'s are of the same type they are transversal. Let $\{w^i_1,w^i_2\}$ be an integral basis for $L_i,$ then $\langle\xi\hh,w^i_j\rb\in\bbz$ for $i,j=1,2.$ This thanks to the transversality of $L_i$'s implies that $\xi$ is a rational vector which is a contradiction.
\end{proof}

The following is essential to the proof of theorem~\ref{unboundedequi(2,2)} and proved in appendix~\ref{sec:counting}.

\begin{prop}\label{counting}
The number of quasinull subspaces with norm between $T/2$ and $T$ is $O(T).$
\end{prop}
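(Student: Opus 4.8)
The plan is to count quasinull $2$-dimensional subspaces $L$ of $\bbr^4$ with $T/2 \le \|v^L\| \le T$ by passing to the wedge-square representation, where each such $L$ corresponds to a primitive integral vector $v^L \in \bigwedge^2\bbz^4 \cong \bbz^6$ satisfying the Pl\"ucker relation (i.e. $v^L$ is decomposable). Writing $v^L = \pi_1(v^L) + \pi_2(v^L)$ relative to the $\mathrm{SO}(Q)$-invariant decomposition $\bigwedge^2\bbr^4 = V_1 \oplus V_2$ of Lemma~\ref{linearalgebra}, the quasinull condition reads $\|\pi_1(v^L)\|\,\|\pi_2(v^L)\| < \mu_1$. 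Since $T/2 \le \|v^L\| \le T$, one of the two factors is $\ll 1/T$; say $L$ is of the first type, so $\|\pi_2(v^L)\| \ll 1/T$ while $\|\pi_1(v^L)\| \asymp T$. The strategy is: the projection $\pi_2(v^L)$ lies in the $3$-dimensional lattice $\pi_2(\bigwedge^2\bbz^4)$ (up to bounded index/denominators — here one may need to clear a fixed denominator depending only on $Q$, or work with a fixed sublattice), so there are only $O(1)$ possible values of $\pi_2(v^L)$ with $\|\pi_2(v^L)\| \ll 1/T$ once $T$ is large — in fact for $T$ large the only possibility is $\pi_2(v^L)$ in a bounded region, giving $O(1)$ choices. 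For each fixed value $w_2 := \pi_2(v^L)$, I must bound the number of decomposable $v \in \bigwedge^2\bbz^4$ with $\pi_2(v) = w_2$, $\|\pi_1(v)\| \le T$.

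The key point is that the fibre $\{v \in \bigwedge^2\bbr^4 : \pi_2(v) = w_2,\ v \text{ decomposable}\}$ is constrained by the Pl\"ucker quadric $v \wedge v = 0$, which in the coordinates adapted to $V_1 \oplus V_2$ becomes (using part (i) of Lemma~\ref{linearalgebra}, that $Q^{(6)}$ restricted to each $V_i$ is nondegenerate of signature $(2,1)$ and $V_1 \perp V_2$) an equation of the form $Q^{(6)}(\pi_1(v),\pi_1(v)) = Q^{(6)}(\pi_2(v), \pi_2(v))$, i.e. $\pi_1(v)$ is forced to lie on a fixed quadric surface $Q^{(6)}|_{V_1}(y,y) = c$ where $c = Q^{(6)}(w_2,w_2)$ is determined by $w_2$. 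So I am reduced to counting integral (or bounded-denominator) points $y \in V_1 \cap (\text{lattice})$ with $\|y\| \le T$ on a fixed ternary quadratic form's level set $Q^{(6)}|_{V_1}(y,y) = c$. The number of integral points of norm $\le T$ on a nondegenerate indefinite ternary quadratic form's level set is $O(T^{1+\epsilon})$, but we want $O(T)$; here I would invoke that for a fixed ternary quadratic form the count of points with $\|y\|\le T$ on a fixed level set is $O(T)$ (with implied constant depending on the form and the level — and $c$ ranges over only $O(1)$ values). Actually the cleanest route, matching the structure of the paper, is to note that decomposable vectors in $\bigwedge^2$ with $\pi_2$ fixed and small correspond to null (or nearly-null) subspaces, and the pair $(L, L') \mapsto$ their wedge is governed by a binary quadratic form count; I would cite the analogous counting in~\cite{EMM2} (this is essentially~\cite[Proposition 10.something]{EMM2}, re-derived here for the inhomogeneous normalization).

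More concretely, the approach I would actually write: reduce to $Q$ rational and split (by Lemma~10.3-type arguments, since the non-EWAS / irrational case already yields $O(T^{1-\tau_2})$ by Theorem~\ref{approximation}, which is far better than $O(T)$). For $Q$ split integral, quasinull $=$ null, and null subspaces of the first type are exactly annihilators of primitive integral row vectors $(m,n)$ with $\gcd(m,n)=1$; the standard integral basis is $\{(m,0,n,0),(0,m,0,n)\}$ in the matrix model, so $v^L = (m,0,n,0)\wedge(0,m,0,n)$ and $\|v^L\| \asymp m^2 + n^2$. Thus the count of null subspaces of the first type with $T/2 \le \|v^L\| \le T$ is the count of primitive $(m,n)$ with $T/2 \lesssim m^2+n^2 \lesssim T$, which is $O(T)$ by a lattice-point count in an annulus of area $O(T)$. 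The same holds for the second type. This handles the split case directly. For general rational $Q$ (EWAS but not necessarily split) and for irrational $Q$, Theorem~\ref{approximation} says all but $O(T^{1-\tau_2})$ quasinull subspaces are null subspaces of a fixed split integral $Q'$ with coefficients bounded by a fixed power of $T^{\tau_2}$; applying the change of variables $\pfrak$ with entries bounded by $T^{O(\tau_2)}$, a quasinull subspace of $Q$ of norm $\asymp T$ maps to a null subspace of $B$ of norm in $[T^{1-\tau_3}, T^{1+\tau_3}]$, and the map on primitive integral structure is injective, so the count is $O(T^{1+\tau_3}) = O(T^{1+O(\tau_2)})$ — which is slightly worse than $O(T)$. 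To recover the sharp $O(T)$ I would instead keep $Q'$ genuinely fixed: argue that if there are $\gg T$ quasinull subspaces then the relevant dilated lattice in $\bbr^6$ has many short vectors, forcing (via the inequalities of~\cite{EMM1} as used in the proof of Theorem~\ref{approximation}) a single fixed split $Q'$ independent of $T$, and then the split-case count $O(T)$ applies verbatim. The main obstacle I anticipate is precisely this last point — getting the implied constant and the approximating form $Q'$ uniform in $T$ so that the bound is $O(T)$ and not merely $O(T^{1+\epsilon})$; this is exactly what the referenced appendix~\ref{sec:counting} is for, and I would structure the proof there around making the choice of $Q'$ in Theorem~\ref{approximation} stable as $T$ grows along the relevant subsequence.
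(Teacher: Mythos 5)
You correctly identify the structure: reduce via Theorem~\ref{approximation} to counting null subspaces of a split integral approximation $Q'$, and you correctly note that the elementary annulus count handles the fixed split case while the change of variables $\pfrak$ (with entries bounded by $T^{O(\tau_2)}$) degrades the bound to $O(T^{1+O(\tau_2)})$. However, your proposed resolution of this obstacle --- arguing that $Q'$ can be taken fixed, independent of $T$, along the relevant subsequence --- cannot work. For an irrational EWAS form $Q$ the approximating split form $Q'$ must genuinely grow with $T$: if $Q'$ stayed bounded, then for $N$ large the inequality $\|Q-Q'/r\|\le r^{-N}$ would force $Q=Q'/r$, making $Q$ rational. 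The irrational EWAS case is exactly the only nontrivial one for Proposition~\ref{counting} ($Q$ irrational non-EWAS already gives $O(T^{1-\tau_2})$ from Theorem~\ref{approximation}(i); $Q$ rational is either the explicit annulus count or trivially zero), so this gap sits precisely where the work is. Your first sketch (project to $V_2$ and use the Pl\"{u}cker quadric) fails at the same spot but more immediately: for irrational $Q$ the projection $\pi_2$ is not defined over $\bbq$, so $\pi_2(\bigwedge^2\bbz^4)$ is not a lattice and there can be arbitrarily many $v^L$ with $\|\pi_2(v^L)\|<1/T$; clearing a fixed denominator does not help.

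The paper's appendix resolves this differently: it never freezes $Q'$. Proposition~\ref{rationalspace} extracts a $T$-dependent \emph{rational} $3$-dimensional subspace $U\subset\bigwedge^2\bbr^4$ (with reduced basis of size $\ll T^\tau$ and small $\pi_2$-projections) containing all but $O(T^{1-\tau})$ of the quasinull $v^L$ --- this is precisely the rational surrogate for your ``$\pi_2$ small'' idea. Proposition~\ref{counting2} then proves a counting estimate for primitive vectors on the light cone of a fixed split ternary form $Q_0$ inside a \emph{varying} lattice $\Delta=g\bbz^3$, $g\in{\rm GL}_3(\bbq)$, with entries and determinant bounded by a fixed power of $T^\tau$. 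The uniformity comes from normalization rather than from freezing $g$: rescale $\Delta$ by $\lambda=\mathrm{disc}(\Delta)^{1/3}$ to a unimodular $\Delta_1$, count $\Gamma_m$-orbit points on the cone of norm $\le T/\lambda$ using reduction theory (Borel--Harish-Chandra, Witt's theorem) together with effective equidistribution via mixing, and obtain a bound $O\bigl((T/\lambda)/\|v_0\|\bigr)$ where $\|v_0\|\geq c/\lambda$ is the shortest primitive cone vector of $\Delta_1$. The two factors of $\lambda$ cancel, yielding $O(T)$ uniformly in $g$; no stabilization of $Q'$ is needed or available.
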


As we mentioned this will be proved in appendix~\ref{sec:counting}. However for the time being let us remark that in the case where $Q$ is a split integral form this is immediate. Indeed in that case we are dealing with null subspaces and hence we need to show this for $Q=B$. As we observed however the null subspaces of $B$ are classified by primitive integral vectors $(m,n).$ Now if $L$ is a null subspace of either type which corresponds to $(m,n)$ then $\|v^L\|=m^2+n^2.$ Thus the result is obvious.

We now turn into the proof of theorem~\ref{unboundedequi(2,2)}.

\textit{Proof of theorem~\ref{unboundedequi(2,2)}:} We may as we will assume that $\tilde{f}$ is non negative. Now define
\begin{equation}\label{cusp}A(r)=\{\Delta\in X\h:\h \alpha_1(\Delta)>r\}\end{equation}
Let $g_r$ be a continuous function on $X$ such that $g_r(\Delta)=0$ if $\Delta\notin A(r),$ $g_r(\Delta)=1$ for all $\Delta\in A(r+1)$ and $0\leq g_r(\Delta)\leq1$ if $r\leq\alpha_1(\Delta)\leq r+1.$ Let the constant $c,$ depending on $f$ and $\xi,$ be as in~(\ref{thinsets}) and let $s\in\bbn$ be a large number. Define
\begin{equation}\label{cutfunction}(\tilde fg_r)_s^\leq=(\tilde fg_r)\chi_{\{\Delta\h:\h \tilde fg_r(\Delta)\leq c2^s\}}\hspace{2mm}\mbox{and}\hspace{2mm}(\tilde fg_r)_s^>=(\tilde fg_r)\chi_{\{\Delta\h:\h \tilde fg_r(\Delta)>c2^s\}}\end{equation}

Let now $\mu$ be as in the statement of the theorem. Since $\tilde f-\tilde fg_r$ is bounded and continuous, theorems~\ref{bequidistribution1} and~\ref{bequidistribution2} imply that
\begin{equation}\label{boundedpart}\limsup_{t\rightarrow\infty}\int_K(\tilde f-\tilde fg_r)(a_tk\Lqxi)\nu(k)dk\leq\int_{G/\Gamma}\hat{f} d\mu\int_K\nu dk\end{equation}
Hence theorem~\ref{unboundedequi(2,2)} will be proved if we show that: Given $\epsilon>0$ we can choose $r_0$ such that if $r>r_0$ then $\limsup_t\int_K\tilde fg_rdk<\epsilon.$

Let now $\epsilon>0$ be an arbitrary small number. Let us first control the contribution of $(\tilde fg_r)_s^>$ when $s$ is large enough. Let $M>0$ be a number fixed for now and large enough such that $\frac{1}{M}<\frac{\epsilon}{6C},$ where $C$ is a universal constant appearing in~(\ref{unboundedintegral4}) and in particular is independent of $\mu_1$ in the definition of quasinull subspaces. Further assume that $M$ is large enough such that all exceptional subspaces have norm less than $M.$ Let $\mu_1$ in the definition of quasinull subspace be small enough such that for all $L\in\qtv$ with $\|v^L\|<M$ if $L$ is quasinull then $L$ is null. Let $s>0$ be large enough such that the conclusion of theorems~\ref{alpha13} and~\ref{notquasinull}, for this $\mu_1$ which we chose, as well as corollaries~\ref{smallsubspaceset} and~\ref{powersaving} hold for $\delta=1/2^j$ whenever $j>s.$ Recall now that we have
\begin{equation}\label{thinsetsrevisted}\{k\in K\h:\h \tilde{f}(a_tk\Lxi)>c2^j\}\subset\{k\in K\h:\h \alpha_{13}(a_tk\Lxi)>2^j\}\hh\cup \bfrak_t(\frac{1}{2^j})\cup\afrak_t(\frac{1}{2^j})\end{equation}
where $\afrak_t(\frac{1}{2^j})$ and $\bfrak_t(\frac{1}{2^j})$ are as in~(\ref{thinsets}). Let $\mathfrak{C}_t(\frac{1}{2^j})=\{k\in K\h:\h \alpha_{13}(a_tk\Lxi)>2^j\}$. We have
\begin{equation}\label{unboundedintegral1}\int_Kh_s^>(a_tK\Lqxi)dk\leq\sum_{s<j<t'}2^j(|\mathfrak{C}_t(\frac{1}{2^j})|+|\mathfrak{B}_t(\frac{1}{2^j})|+|\mathfrak{A}_t(\frac{1}{2^j})|)\end{equation}

For any small $\vare>0$ and any quasinull subspace $L$ let $\afrak_t^{L}(\frac{1}{2^j},\vare)$ be as before. Theorems~\ref{alpha13} and~\ref{notquasinull} imply that if $s$ is large enough then for all $0<\vare\ll1/20$ and all $j>s$ we have
\begin{equation}\label{unboundedintegral2}|\mathfrak{C}_t(\frac{1}{2^j})|+|\bfrak_t(\frac{1}{2^j})|+|\afrak_t(\frac{1}{2^j})\setminus\bigcup_{L\in\mathcal{Q}_t(\frac{1}{2^j},\vare)}\afrak_t^{L}(\frac{1}{2^j},\vare)|<\frac{1}{2^{(1+\frac{\vare}{4})j}}\end{equation}
Let $\eta_1<1/4$ be as in the proposition~\ref{nondivergence} and let $\eta=\eta_1/2$. The conclusions of corollaries~\ref{smallsubspaceset} and~\ref{powersaving} hold with this $\eta$ and the corresponding $\tau.$ Let now $\mathcal{Q}^<_t(\frac{1}{2^j},\vare)$ (resp.  $\mathcal{Q}^{\geq}_t(\frac{1}{2^j},\vare)$) be the set of quasinull subspaces in $\mathcal{Q}_t(\frac{1}{2^j},\vare)$ with norm less than $M$ (resp. greater than or equal to $M.$) We have
\begin{equation}\label{unboundedintegral3}\begin{array}{c}|{\bigcup}_{L\in\mathcal{Q}^<_t(\frac{1}{2^j},\vare)}\afrak_t^{L}(\frac{1}{2^j},\vare)|\leq\sum_i\sum_{\frac{e^t}{2^{i+j+1}}}^{\frac{e^t}{2^{i+j}}}|\itl|\leq C_1\sum_i\frac{1}{2^{(1+\eta)j+\frac{i}{2}}}  \\ |{\bigcup}_{L\in\mathcal{Q}^\geq_t(\frac{1}{2^j},\vare)}\afrak_t^{L}(\frac{1}{2^j},\vare)|\leq \sum_i\sum_{\frac{e^t}{2^{i+j+1}}}^{\frac{e^t}{2^{i+j}}}|\itl|\leq C_2\sum_i(\frac{1}{2^{(1+\eta)j+\frac{i}{2}}}+\frac{e^{-\tau t}}{2^{(\frac{1}{2}-\tau)i+(1-\tau)j}})\end{array}\end{equation}
where $C_1$ and $C_2$ are absolute constant independent of $\mu_1$. The inequality in the first line above follows from corollary~\ref{smallsubspaceset} and the fact that the definition of $\mathcal{Q}_t(\frac{1}{2^j},\vare)$ excludes exceptional subspaces. The inequalities in the second line follow from corollary~\ref{powersaving}. We now have

\begin{equation}\label{unboundedintegral4}\sum_{j>s}2^j|\bigcup_{L\in\mathcal{Q}_t(\frac{1}{2^j},\vare)}\afrak_t^{L}(\frac{1}{2^j},\vare)|\leq C(\frac{1}{2^s\eta}+\frac{1}{M})\end{equation}
here $C'$ is an absolute constant which depends on $C$ in~(\ref{unboundedintegral4}). This inequality together with~(\ref{unboundedintegral2}) gives
\begin{equation}\label{unboundedintegral5}\int_K(\tilde fg_r)_s^>(a_tK\Lqxi)dk\leq \frac{C'}{2^{\frac{\vare s}{4}}}+C(\frac{1}{2^{\eta s}}+\frac{1}{M})\end{equation}

Recall that $M$ was chosen such that $\frac{C}{M}<\epsilon/6.$ Now we choose $s$ large enough such that the right hand side of~(\ref{unboundedintegral5}) is less than $\epsilon/2.$ The above estimate holds for all $r.$

As we mentioned in the proof of theorem~\ref{unbddequinot(2,2)}:  There exists $r_0=r_0(s,\epsilon)$ such that if $r>r_0$ then $\mu(A(r))<\epsilon/2^{s+1}.$ Let $r>r_0(s,\epsilon).$ We have
\begin{equation}\label{boundedincusp}\limsup_{t\rightarrow\infty}\int_K(\tilde fg_r)_s^\leq(a_tk\Lqxi)\nu(k)dk\leq 2^s\limsup_{t\rightarrow\infty}\int_Kg_r(a_tk\Lqxi)\nu(k)dk\leq\epsilon/2\end{equation}

Thus~(\ref{boundedincusp}) and~(\ref{unboundedintegral5}) give: If $r>r_0(s,\vare)$ then
\begin{equation}\label{unboundedintegralfinal}\limsup_{t\rightarrow\infty}\int_K\tilde fg_r(a_tk\Lqxi)dk<\epsilon\end{equation}
This finishes the proof of theorem~\ref{unboundedequi(2,2)}.


\section{Proof of theorem~\ref{(2,1)}}\label{sec;(2,1)}

The proof of theorem~\ref{(2,1)} is very similar to that of theorem~\ref{(2,2)}. Indeed our study in this case is simpler as we are dealing with the case where the homogeneous part $Q$ is rational. Hence we only need to consider the contribution of null subspaces to the counting function.

As before let $\mathfrak{q}\in\SL_3(\bbr)$ be such that $Q(v)=B(\mathfrak{q}v)$ for all $v\in\bbr^3.$ Since $Q$ is rational we may assume $\mathfrak{q}$ is in $\mbox{PGL}_3(\bbq),$ let $\mathfrak{p}\in\mbox{GL}_3(\bbq)$ be a representative for $\mathfrak{q}.$ Let $\Lambda=\mathfrak{q}\bbz^3$ and define $\Lqxi=\mathfrak{q}(\bbz^3+\xi).$ As in section~\ref{sec:(2,2)} let $X(\Lqxi)$ be the set of vectors in $\Lqxi$ not contained in $\mathfrak{q}L$ where $L\subset\bbz^3$ is an exceptional (one dimensional) subspace for $Q$. An argument like that in lemma~\ref{exceptional} shows there are at most $3$ such subspaces when $Q$ is rational and $\xi$ is an irrational vector. For any continuous compactly supported function $f$ on $\bbr^3$ define
\begin{equation}\label{mthetatrans(2,1)}\tilde f(g:\Lqxi)=\sum_{v\in X(\Lqxi)}f(gv)\end{equation}
Discussions as before, reduce the proof of theorem~\ref{(2,1)} to the following theorem.

\begin{thm}\label{unboundedequi(2,1)}
Let $G, H, K$ and $\{a_t\}$ be as in section~\ref{sec;inhomo} for the signature $(2,1)$ case. Let $Q_{\xi}$ be a quadratic form of signature $(2,1)$ as in the statement of theorem~\ref{(2,1)}. Let $\mathfrak{q}\in\SL_3(\bbr)$ and $\Lqxi$ be as above. Let $\nu$ be a continuous function on $K.$ Then we have
\begin{equation}\label{e:unboundedequi(2,1)}\limsup_{t\rightarrow\infty}\int_K\tilde{f}(a_tk:\Lqxi)\nu(k)dk\leq\int_{G/\Gamma}\hat{f}(g)d\mu(g)\int_K\nu(k)dk\end{equation}
where $\mu$ is the $H\ltimes\bbr^3$-invariant probability measure on the closed orbit $H\ltimes\bbr^3\cdot\Lqxi$.
\end{thm}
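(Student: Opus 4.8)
The plan is to run the argument of Section~\ref{sec;main} (the proof of Theorem~\ref{unboundedequi(2,2)}) in the present, lighter situation. Two features simplify the split rational $(2,1)$ case. First, since $Q$ is rational the orbit $Y=H\ltimes\bbr^3\cdot\Lqxi$ is closed and carries a finite invariant measure $\mu$, so all the equidistribution and no escape of mass input takes place inside $Y$; there are no intermediate orbits to worry about since $H=\SL_2(\bbr)$ acts irreducibly on $\bbr^3\cong\mathrm{Sym}_2(\bbr)$. Second, as in Section~\ref{sec;(2,1)} we may replace quasinull subspaces by honest null subspaces: for $\mu_1$ small enough every $\mu_1$-quasinull rational line is a rational null line of $Q$, and through a representative $\pfrak\in\mathrm{GL}_3(\bbq)$ of $\qfrak$ these correspond to the rational null lines of $B=x_1x_3-x_2^2$, generated by the primitive integral vectors $(m^2,\pm mn,n^2)$ with $\gcd(m,n)=1$. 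Hence Theorem~\ref{approximation} is not needed, and Proposition~\ref{counting}, i.e. that there are $O(T)$ null lines of norm in $[T/2,T]$, is immediate from $\#\{(m,n):m^2+n^2\asymp T\}=O(T)$.

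First I would dispose of the bounded part exactly as in Section~\ref{sec;main}: write $\tilde f=(\tilde f-\tilde fg_r)+\tilde fg_r$, with $g_r$ the cutoff of $\{\Delta:\alpha_1(\Delta)>r\}$. Since $\tilde f-\tilde fg_r$ is bounded and continuous, Theorems~\ref{bequidistribution1} and~\ref{bequidistribution2} give $\limsup_t\int_K(\tilde f-\tilde fg_r)(a_tk\Lqxi)\nu\,dk\le\int_{G/\Gamma}\hat f\,d\mu\int_K\nu\,dk$ (using $\tilde f\le\hat f$ pointwise when $f\ge0$), so it remains to show that for each $\epsilon>0$ there is $r_0$ with $\limsup_t\int_K\tilde fg_r(a_tk\Lqxi)\,dk<\epsilon$ for $r>r_0$. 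Splitting $\tilde fg_r$ at height $c2^s$, the bounded piece is controlled by $2^s\mu(A(r))$ with $\mu(A(r))\to0$ (no escape of mass in $Y$), arranged by taking $r$ large after $s$; so everything reduces to estimating $\int_K(\tilde fg_r)^{>}_s(a_tk\Lqxi)\,dk$. The key structural point — and the reason Theorem~\ref{(2,1)} is more restrictive than Theorem~\ref{(2,2)} — is that for the $\mathrm{SO}(2,1)$-action there is no analogue of Theorem~\ref{alpha13} with exponent $2$: short vectors of $a_tk\Lambda$ come entirely from null lines and $|\{k:\alpha_1(a_tk\Lambda)>1/\delta\}|\asymp\delta$, too large to carry the weight $2^j$. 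One must therefore retain the translate condition. Since for integral $Q$ a rational line $L$ with $d(a_tkL)<\delta\ll1$ forces $|Q|<1$ on its generator, hence $L$ null (and likewise a thin rational plane is the $B$-orthogonal of a null line), one obtains, for $0<\delta\ll1$, that $\{k\in K:\tilde f(a_tk\Lqxi)>c/\delta\}$ is, up to a negligible set coming from the at most three exceptional lines (handled via the passage from $\hat f$ to $\tilde f$ together with $\alpha_1$, as in Section~\ref{sec:(2,2)}, and Lemma~\ref{exceptional}), contained in $\afrak_t(\delta)=\bigcup_L\afrak_t^L(\delta)$, the union over rational null lines $L$ of $Q$ that are not exceptional subspaces, where $\afrak_t^L(\delta)$ is the set of $k$ with $d(a_tkL)<\delta$ and some integral translate of $\qfrak(L+\xi)$ meeting a fixed ball $B(r_0)\supset\mathrm{supp}(f)$. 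The $(2,1)$-form of Lemma~\ref{intervals} is that $\{k:d(a_tkL)<\delta\}$ is a single arc in $K=\mathrm{SO}(2)$ of length $\asymp\sqrt{\delta e^{-t}/\|v^L\|}$, as $v^L$ is a null vector for the $\mathrm{SO}(2,1)$-action and $m_t(L)\asymp\|v^L\|e^{-t}$.

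The heart of the proof is the $(2,1)$-analogue of Section~\ref{sec:quasinull}: there are $\eta,\tau>0$ so that for all small $\delta$ and all $T>2$ the number of non-exceptional null lines $L$ with $\|v^L\|\in[T/2,T]$ for which $|\afrak_t^L(\delta)|>\delta^{\eta}\sqrt{\delta e^{-t}/T}$ is $O(T^{1-\tau})$. I would obtain this in three steps, transcribing Propositions~\ref{nondivergence} and~\ref{fewbadsubspaces}. (1) A non-divergence/$(C,\alpha)$-good dichotomy for one-dimensional null lines: either a single translate $L+\lambda_0+\zeta$ stays inside $B(r_0)$ on the whole relevant arc, or, since $\phi\mapsto\|a_tk_\phi(v^L\wedge(\lambda+\zeta))\|$ is a bounded-degree trigonometric polynomial on $\bigwedge^2\bbr^3\cong\bbr^3$ and hence $(C,\beta)$-good by~\cite[Proposition~3.4]{KM}, the translate-visiting part of the arc is at most $\delta^{\eta_1}$ times the arc length. (2) In the first case, using the contragredient $a_t$-action on $\bigwedge^2\bbr^3$ exactly as in Lemma~\ref{smalltrans}, exceeding the threshold forces $\|(\lambda_0+\zeta)_{L^\perp}\|\ll\delta^{1-\eta_2}/T$, whence (as in Lemma~\ref{onetransestimate}, using that $L$ is $Q$-null so $\langle\,\cdot\,,\,\cdot\,\rangle_Q$ annihilates $L$) the pairings $\{\langle v^L,\xi\rangle_Q\}$ and of a complementary integral generator of $L^{\perp_Q}$ with $\xi$ are abnormally close to $\bbz$; since $\pfrak v^L$ is a scalar multiple of $(m^2,mn,n^2)$, this says that fixed binary quadratic forms in $(m,n)$, with coefficients read off from $\pfrak\xi$, take values exceptionally close to $\bbz$. (3) As $\pfrak\in\mathrm{GL}_3(\bbq)$ and $\xi$ is Diophantine, $\pfrak\xi$ is Diophantine by Lemma~\ref{lineardiophantine}, and a counting argument in the annulus $\{m^2+n^2\asymp T\}$ then bounds the number of admissible $(m,n)$, hence of bad lines, by $O(T^{1-\tau})$.

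Finally I would assemble these as in~(\ref{unboundedintegral1})--(\ref{unboundedintegral5}): split the null lines into small ($\|v^L\|<M$), handled by the analogue of Corollary~\ref{smallsubspaceset} (a small non-exceptional $L$ has $|\afrak_t^L(\delta)|<\delta^{\eta_2}$, since $\lambda+\zeta\in L$ is impossible), ``big and good'', and ``big and bad''; summing the arc-length bound $\sqrt{\delta e^{-t}/T}$ over the $O(T)$ null lines at each dyadic scale $T\lesssim e^t\delta$ and then over $\delta=2^{-j}$ — which runs only up to $j\asymp t$ since $\tilde f\le c\,\alpha(a_tk\Lambda)\lesssim e^t$ — gives that good and small lines contribute $\lesssim 2^{-\eta s}$, while big bad lines contribute $\lesssim M^{-\tau}$, the cutoff $\|v^L\|\ge M$ being exactly what collapses the a priori $e^{-t\tau}\cdot e^{t\tau}$ into $M^{-\tau}$. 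Choosing first $M$, then $s$, then $r$ large yields $\limsup_t\int_K\tilde fg_r(a_tk\Lqxi)\,dk<\epsilon$, and with the bounded part this proves the theorem. The hard point is step~(3): unlike the $(2,2)$ case, where the exceptional-subspace data entered $(m,n)$ linearly, the obstruction here is a binary \emph{quadratic} form in $(m,n)$, so the counting needs more than the one-solution-per-box argument of Proposition~\ref{fewbadsubspaces} — essentially a small fractional parts estimate for quadratic forms at integer points, built on the Diophantine property of $\pfrak\xi$ — together with a consistent choice of all the exponents $\eta,\eta_1,\eta_2,\tau,\tau_1,\tau_3,\vare$; the non-divergence dichotomy for affine lines and the remaining reductions are routine transcriptions of Sections~\ref{sec;main} and~\ref{sec:quasinull}.
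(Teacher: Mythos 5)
Your overall strategy is correct and matches the paper's: reduce to the study of null lines using rationality of $Q$ (the observation that a thin rational line of an integral form must be null, so that $\alpha_1$ large forces a null line, and $\alpha_2$ large reduces to $\alpha_1$ by reduction theory — this is Remark~\ref{r;(2,1)} in the paper), deduce a ``fractional part small'' condition on the binary quadratic form $n^2(\pfrak\xi)_1-2mn(\pfrak\xi)_2+m^2(\pfrak\xi)_3$ attached to each offending line via the Diophantine vector $\pfrak\xi$, and count. You also correctly pinpoint the genuinely new ingredient (and the ``hard point''): a small-fractional-parts estimate for a binary \emph{quadratic} form at integer points, where Proposition~\ref{fewbadsubspaces}'s one-point-per-box trick is not enough. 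In the paper this is exactly Lemma~\ref{fewbadsubspaces(2,1)}, whose proof invokes Weyl exponential sum estimates in the quantitative form of Green--Tao~\cite{GT} to show that if the fractional part is small for more than $\delta^{\eta_1}T$ primitive $(m,n)$ in the annulus then the coefficients $(\pfrak\xi)_i$ are abnormally close to rationals of bounded height, contradicting the Diophantine hypothesis once $T\geq\delta^{-\eta_2}$; for $T<\delta^{-\eta_2}$ a separate linear-algebra argument (no three $v^L$ coplanar, plus Lemma~\ref{lineardiophantine}) yields ``at most two'' such lines.

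There are two places where your route departs from the paper, one of them an over-complication and the other an actual gap. First, you transcribe the non-divergence dichotomy of Proposition~\ref{nondivergence}. This is unnecessary here: in $(2,2)$ the thin set is a rectangle $\itlone\times\itltwo$ and the dichotomy is run over the potentially long $\itltwo$-factor; in $(2,1)$ the compact group $K$ is a single $\mathrm{SO}(2)$ and $\{k:d(a_tkL)<\delta\}$ is already a short arc of length $\asymp\sqrt{\delta e^{-t}/\|v^L\|}$, so the implication $\afrak_t^L(\delta)\neq\emptyset\Rightarrow\{\langle v^L,\pfrak\xi\rangle_B\}\leq c\delta$ is immediate from the $B$-invariance of $a_tk$, the nullity of $L$, and the fact that $\langle u,v\rangle_Q$ lies in a fixed rational lattice for $u,v\in\bbz^3$; no Kleinbock--Margulis argument is needed. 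Second, and more substantively, your intermediate counting lemma has the wrong shape: you assert that the number of $L$ at scale $T$ with $|\afrak_t^L(\delta)|$ a definite fraction of the arc is $O(T^{1-\tau})$ uniformly in $T>2$, but the paper's Lemma~\ref{fewbadsubspaces(2,1)} gives $O(\delta^{\eta_1}T)$ only for $T\geq\delta^{-\eta_2}$, together with ``$\leq2$'' for $T<\delta^{-\eta_2}$; your $T^{1-\tau}$ form is not implied for $T\gg\delta^{-\eta_1/\tau}$ and does not match what the Weyl estimate actually produces. Correspondingly your final bookkeeping (``small and good lines contribute $\lesssim 2^{-\eta s}$, big bad lines $\lesssim M^{-\tau}$'') is too coarse to close: the paper's Cases~1, 2(i), 2(ii), 2(iii) are precisely the careful treatment of the $\leq2$ small lines $L_0,L_1$, combining the arc-length bound with the $M$-cutoff to produce the $M^{-1/2}$, $M^{1/\eta}e^{-t/2}$, and $e^{-t/2}2^{\ell/2}$ terms of~(\ref{unboundedintegralnull(2,1)}), after which the order of limits (first $M$, then $t$ and $s$, then $r$) closes the argument. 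So the idea is right and the main structural insights are there, but you would need to restate the counting lemma in the paper's form, drop the superfluous non-divergence step, and redo the summation with the two-subspace case distinguished.
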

The proof of this theorem is very similar to that of theorem~\ref{unboundedequi(2,2)}. We will use the same notations as the previous sections for the sake of simplicity. The main notational difference to bare in mind is that in previous sections $L$ would denote a $2$-dimensional subspace where in this section $L$ is a one dimensional (null) subspace.

As before we need to study the subsets of $K$ where the function $\tilde{f}$ is ``large". We start by recalling the following; There is $c>0$ such that for all large $t$ and small $0<\delta\ll1$ we have
\begin{equation}\label{thinsets(2,1)}\{k\in K\h:\h \tilde{f}(a_tk\Lqxi)>\frac{c}{\delta}\}\subset\{k\in K\h: \alpha_{1}(a_tk\Lqxi)>\frac{1}{\delta}\}\hh\cup \{k\in K\h:\alpha_{2}(a_tk\Lqxi)>\frac{1}{\delta}\}\end{equation}
We fix $t$ and $\delta$ as above. Let us make two important remarks before we continue. The second name author would like to thank A.~Eskin for conversations regarding this remark.

\begin{remark}\label{r;(2,1)}
\begin{itemize}
\item[(i)] Using reduction theory of the orthogonal group we see that: if $\alpha_2(a_tk\Lqxi)>\frac{1}{\delta},$ then actually $\alpha_{1}(a_tk\Lqxi)>\frac{1}{\delta}.$ Hence we only need to study the contribution coming from $\alpha_1.$
\item[(ii)] The fact that $Q$ is a rational form implies that there exists $\delta_0$ depending only on $Q$ such that if $0<\delta<\delta_0$ and $\alpha_{1}(a_tk\Lqxi)>\frac{1}{\delta},$ then there is a null subspace $L$ such that $\alpha_{1}(a_tk\Lqxi)=\frac{1}{d(a_tkL)}$. This follows because $Q(\bbz^3)=B(\Lambda)$ is a discrete set and $a_tk$ is in the isometry group of $B.$
\end{itemize}
\end{remark}
The second remark above is the main reason for our assumption, $Q$ is a rational form. It is also the main reason we can handle this case in hand despite the more complicated case where $Q$ is an arbitrary form.

Given these two remarks and arguing as in section~\ref{sec:(2,2)}, we actually need to consider the set
\begin{equation}\label{thinsetnull(2,1)}\afrak_t({\delta})=\bigcup_{L\in\mathcal{N}}\{k\in K\h:\h d(a_tkL)<\delta\h\&\h\exists\hh v\in\bbz^3\h\hh\mbox{s. t.}\h a_tk(L+v+\mathfrak{q}\xi)\cap B(r)\neq\emptyset\}\end{equation}
where $\mathcal{N}$ is the set of null subspaces of $Q$ which are not exceptional subspaces and $B(r)$ is the ball of radius $r$ in $\bbr^3.$ We will show that; There exists $\eta>0$ depending on the Diophantine properties of $\xi$ such that: at most $\delta^{\eta}$ of null subspaces can have nontrivial contribution to~(\ref{thinsetnull(2,1)}). This will finish the proof of theorem~\ref{unboundedequi(2,1)} as we will see.

Let us remark that; The null subspaces coincide with the orbit of $\left(\begin{array}{cc}x_{11} & 0\\ 0 & 0\end{array}\right)$ under the action of $\SL_2(\bbr)$ described in section~\ref{sec;inhomo}. With this identification and using a theorem of Borel and Harish-Chandra, see the discussion in appendix B below, the primitive null vectors are identified with the $\SL_2(\bbz)$ orbits of vectors $\left(\begin{array}{cc}\pm1 & 0\\ 0 & 0\end{array}\right)$ under the same action. In what follows we will restrict ourselves to the $+1$ case. These are described as follows; For any primitive vector $(m,n)$ the corresponding primitive null vector is given by $\left(\begin{array}{cc}m^2 & mn\\ mn & n^2\end{array}\right).$ For any such subspace, $M$ say, we let $v^M$ denote this vector which we will refer to as the standard basis for $M.$

For any null subspace $L$ of $Q$ the subspace $\pfrak L$ is a (rational) null subspace of $B,$ we let $v^L$ denote the standard basis for $\pfrak L.$ For any such $L$ let $\afrak_t^L(\delta)$ be the corresponding set in~(\ref{thinsetnull(2,1)}). We have the following

\begin{lem}\label{fewbadsubspaces(2,1)}
There exists $0<\eta_1\ll\eta_2<1$ depending on the Diophantine properties of $\xi$ such that if $0<\delta\ll1$ is given and $T\geq\delta^{-\eta_2},$ then the number of null subspaces $L$ with $T/2\leq\|v^L\|\leq T$ such that $\afrak_t^L(\delta)\neq\emptyset$ is $O(\delta^{\eta_1}T).$ Furthermore there are at most two subspace $L$ with $\|v^L\|\leq\delta^{-\eta_2}$ for which
$\{\langle v^L\hh,\pfrak\xi\rb\}\leq O(\delta).$
The implied constants in the $O$-notation depend on $Q$ and $\xi.$
\end{lem}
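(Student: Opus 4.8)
The plan is to mimic the structure of the argument in section~\ref{sec:quasinull} (proposition~\ref{nondivergence} through proposition~\ref{fewbadsubspaces}), but in the simpler one-dimensional setting where $L$ is a null line rather than a null plane. First I would translate the nonemptiness of $\afrak_t^L(\delta)$ into a statement about a single translate. Since we are in the rational case, remark~\ref{r;(2,1)}(ii) tells us $\alpha_1$ is realized on a genuine null subspace, and the analogue of proposition~\ref{nondivergence} gives a dichotomy: either (ii) there is a uniform non-divergence estimate which immediately yields a $\delta^{\eta_1}$-type bound on the relevant subset of $K$ via the $(C,\alpha)$-good property of the relevant polynomial functions $\phi \mapsto \|a_t k_\phi(v^L \wedge (\lambda+\zeta))\|$ (using~\cite[Proposition 3.4]{KM} as in the proof of proposition~\ref{nondivergence}), or (i) exactly one translate $L+\lambda_0+\zeta$ stays near $B(r)$ throughout. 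In case (ii), summing over the $O(T)$ null subspaces of norm in $[T/2,T]$ (proposition~\ref{counting}, which for $B$ is just counting primitive $(m,n)$ with $m^2+n^2\in[T/2,T]$) still gives a bound of the form $O(\delta^{\eta_1}T)$, so the substantive case is (i).

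In case (i), I would run the analogue of lemma~\ref{smalltrans}: if the bad set in $\mbox{SO}(2)$ attached to $\lambda_0$ has measure $> \delta^{\eta_2}$, then comparing $\|a_t(k_\theta,k_\phi)(v^L\wedge(\lambda_0+\zeta))\| = \|a_t(k_\theta,k_\phi)v^L\|\cdot\|(a_t(k_\theta,k_\phi)(\lambda_0+\zeta))_{L_\phi^\perp}\|$ with its maximum over $\phi$, and using $\|a_t v^L\| \leq \delta$ together with the $B(r)$-constraint $\|(\cdots)_{L_\phi^\perp}\| \leq r$, forces $\|(\lambda_0+\zeta)_{L^\perp}\| \ll \delta^{1-\eta_2}/T$. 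Pulling back through $\qfrak$ (equivalently, pairing with the standard basis $v^L$ of $\pfrak L$ as in~(\ref{smallinnerpro})), this gives $\{\langle v^L, \pfrak\xi\rb\} = \{ m^2(\pfrak\xi)_1 + 2mn(\pfrak\xi)_2 + n^2(\pfrak\xi)_3 \cdots\} \leq c\,\delta^{1-\eta_2}/T^{1/2}$ (with the exact linear form read off from $v^M = (m^2,mn,mn,n^2)$ and the definition of $B$), where $\{\cdot\}$ is distance to the nearest integer. This is the point where the hypotheses of theorem~\ref{(2,1)} enter decisively: $Q$ rational and split lets us take $\pfrak \in \mbox{GL}_3(\bbq)$ with bounded entries, and $\xi$ Diophantine then makes $\pfrak\xi$ Diophantine by lemma~\ref{lineardiophantine}.

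It remains to count the primitive vectors $(m,n)$ with $\|v^L\| = (m^2+n^2) \asymp T$ for which the above near-integrality can hold; this is exactly the role played by proposition~\ref{fewbadsubspaces} in the $(2,2)$ case. The linear form $\ell(m,n) = m^2 a_1 + mn a_2 + n^2 a_3$ (with $a_i$ the relevant entries/combinations of $\pfrak\xi$) is not linear in $(m,n)$, so rather than the box argument of proposition~\ref{fewbadsubspaces} I would either work on the space $\bigwedge^2$ where the null vectors sit linearly — using that the action of $\SL_2$ on the null cone of $B$ in $\mbox{Sym}_2$ is the symmetric square, so $(m,n)\mapsto v^M$ is the Veronese and near-integrality of $\langle v^M, \pfrak\xi\rb$ is a single linear Diophantine condition on the Veronese image — or, more directly, dyadically decompose the annulus $\|(m,n)\|^2\asymp T$ into $\asymp T^{1/2-\tau_1}/T^{\tau_1}$-many arcs on which $\ell$ is monotone and apply the one-variable Diophantine estimate for $\pfrak\xi$ (this is where $T\geq\delta^{-\eta_2}$ is used: it guarantees $\delta^{1-\eta_2}/T^{1/2}$ beats the Diophantine exponent of $\pfrak\xi$ at denominator scale $\asymp T^{1/2}$, forcing at most one solution per arc). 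Summing over arcs gives $O(\delta^{\eta_1}T)$ for a suitable $\eta_1 \ll \eta_2$. The "furthermore" clause for $\|v^L\|\leq\delta^{-\eta_2}$ is the small-subspace analogue of corollary~\ref{smallsubspaceset}: for $T$ small the condition $\|(\lambda_0+\zeta)_{L^\perp}\|\ll\delta^{1-\eta_2}$ together with discreteness of $\Lambda$ forces $\lambda_0+\zeta\in L$, and since $\xi$ is irrational this can happen for at most finitely many (here, two, by the exceptional-subspace count) null lines — and those are excluded from $\mathcal{N}$, so in fact the bound is vacuous except for the bookkeeping of the at-most-two exceptional lines, matching the statement. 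The main obstacle I expect is handling the quadratic (rather than linear) dependence of the counting condition on $(m,n)$; passing to the Veronese/$\bigwedge$ picture, exactly as~\cite{EMM2} does for the $(2,2)$ system of inequalities, is the cleanest way around it and also the way to get the constants uniform.
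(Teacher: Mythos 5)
The main gap is in your treatment of ``case (ii).'' The lemma is a \emph{counting} statement: the number of null lines $L$ with $\afrak_t^L(\delta)\neq\emptyset$ and $\|v^L\|\in[T/2,T]$ is $O(\delta^{\eta_1}T)$. Your proposed case~(ii) of a nondivergence dichotomy yields, for each such $L$, a \emph{measure} bound $|E_t^L|\lesssim\delta^{\eta_1}$ inside $\mathrm{SO}(2)$; it does not make $\afrak_t^L(\delta)$ empty. In the worst case all $O(T)$ null lines in the annulus fall into case~(ii) while $\afrak_t^L(\delta)\neq\emptyset$, and then your argument says nothing about the count, so ``case~(ii) contributes $O(\delta^{\eta_1}T)$'' does not follow. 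Moreover the dichotomy you are importing is tailored to the $(2,2)$ setting, where $K=\mathrm{SO}(2)\times\mathrm{SO}(2)$ and the second rotation factor acts \emph{along} the null plane; here $L$ is a line, $K$ is a single $\mathrm{SO}(2)$, $\afrak_t^L(\delta)$ already sits inside a single interval $I_t^L(\delta)$ of length $\approx(e^{-t}\delta/T)^{1/2}$, and there is no parallel direction in which to run the $(C,\alpha)$-good nondivergence. The whole $\eta_2$-threshold $/$ smalltrans apparatus is simply not the right tool in one dimension.

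The paper's proof bypasses all of this and is much shorter. From $\afrak_t^L(\delta)\neq\emptyset$ one has a single $k$ with $\|a_tkw\|<\delta$ ($w$ the primitive generator of $L\cap\Lambda$) and $a_tk(L+v+\qfrak\xi)\cap B(r)\neq\emptyset$; since $a_tk\in\mathrm{SO}(B)$, $L$ is $B$-null, and $B(w,v)\in\bbz$ by rationality of $Q$, pairing via the invariant form gives immediately $\{\langle v^L\hh,\pfrak\xi\rb\}\leq c\delta$ with no $\eta_2$-loss and no power of $T$, hence no dichotomy is needed at all. The count is then done not by Veronese linearization or dyadic arcs but by Weyl exponential-sum estimates from~\cite{GT}: more than $\delta^{\eta_1}T$ primitive $(m,n)$ with $m^2+n^2\asymp T$ putting $n^2(\pfrak\xi)_1-2mn(\pfrak\xi)_2+m^2(\pfrak\xi)_3$ within $c\delta$ of $\bbz$ forces a small $k\in\bbz$ with $\max_i\{k(\pfrak\xi)_i\}\leq c_0\delta^{-c_1\eta_1}/T$, contradicting the Diophantine property of $\pfrak\xi$ once $T\geq\delta^{-\eta_2}$ with $\eta_2>(c_1+\kappa)\eta_1$. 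Finally, the ``at most two'' clause is handled not by discreteness of $\Lambda$ and $\lambda_0+\zeta\in L$ but by noting that the $v^L$ lie on the Veronese conic, so no three are coplanar: three simultaneous near-integrality conditions at denominator scale $\leq\delta^{-\eta_2}$ would produce a rational approximation to $\pfrak\xi$ violating the Diophantine condition.
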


\begin{proof}
First note that with the description of null subspaces of $B,$ which was given above, we see that the number of primitive vectors $w$ with $T/2\leq\|w\|\leq T$ and $B(w)=0$ is $O(T),$ see appendix B below for a discussion of similar statements. Let now $L$ be a null subspace with $\afrak_t^L(\delta)\neq\emptyset$ and suppose $v^L$ corresponds to the primitive vector $(m,n).$ We have
\begin{equation}\label{innerp(2,1)}\{\langle v^L\hh,\pfrak\xi\rb\} = n^2(\pfrak\xi)_1-2mn(\pfrak\xi)_2+m^2(\pfrak\xi)_3\leq c\delta\end{equation}
where $c$ is an absolute constant depending on $Q$ and $r.$ Since $\xi$ is Diophantine lemma~\ref{lineardiophantine} implies that $\pfrak\xi$ is Diophantine as well. 

Note that no three $v^L$ lie on one plane in $\bbr^3.$ Hence using lemma~\ref{lineardiophantine} we see that there exists $\eta_2$ depending on Diophantine properties of $\pfrak\xi$ such that~(\ref{innerp(2,1)}) cannot holds for more than two $L$'s with $\|v^L\|\leq\delta^{-\eta_2}.$

By virtue of Weyl's exponential sum estimates, see theorem 2.9, proposition 4.3 and theorem 8.6 in~\cite{GT}, we see that there are absolute constants $c_0,c_1\geq1$ such that for all small enough $\delta$ and all $0<\eta_1<1$; If~(\ref{innerp(2,1)}) holds for more that $\delta^{\eta_1}T$ subspaces $L$ with $T/2\leq\|v^L\|\leq T,$ then there exists $k\in\bbz$ with $|k|<\delta^{-c_1\eta_1}$ such that
\begin{equation}\label{diophantine(2,1)}
\max_i\{k(\pfrak\xi)_i\}\leq c_0\delta^{-c_1\eta_1}/T
\end{equation}

Now let $\eta_2$ be chosen as above and choose $\eta_1$ such that $\eta_2>(c_1+\kappa)\eta_1,$ where $\kappa$ is the Diophantine exponent of $\pfrak\xi$ and $c_1$ is the constant appearing in~(\ref{diophantine(2,1)}). With these choices we see that if $T>\delta^{-\eta_2}$ then~(\ref{diophantine(2,1)}) contradicts Diophantine assumption on $\pfrak\xi.$ Hence~(\ref{innerp(2,1)}) can hold for at most $\delta^{\eta_1}T$ subspaces as we wanted to show.
\end{proof}

Before starting the proof of theorem~\ref{unboundedequi(2,1)} we need to recall the following definition
\begin{equation}\label{e:intervals(2,1)}I_t^{L}(\delta)=\{k\in K\h:\h d(a_tkL)<\delta\}\end{equation}
Some important properties of these intervals were proved in~\cite{EMM2} and recalled in lemma~\ref{intervals}. What is important for us in this section is the property (ii) in lemma~\ref{intervals}. This property, tailored to our current assumptions, gives
\begin{equation}\label{intervals(2,1)}|\afrak_t^L(\delta)|\leq|I_t^L(\delta)|\approx \left(\frac{e^{-t}\delta}{T}\right)^{1/2}\end{equation}

\textit{Proof of theorem~\ref{unboundedequi(2,1)}.} Let $M$ be a large number which is fixed for now. Let $t>0$ be a large number. Assume $j$ is a large number also. We assume through out that $\eta=\eta_1$ where $\eta_1$ is as in lemma~\ref{fewbadsubspaces(2,1)}.
Let $L_0,L_1$ be nonexceptional null subspaces such that $\{\langle v^{L_k},\hh \pfrak\xi\rb\}<1/2^{j}$ for $k=0,1,$ and that $\|v^{L_0}\|\leq\|v^{L_1}\|$ are minimal with these properties, indeed we fix any two such subspaces if there are more than two subspaces satisfying these conditions. Assume that $e^{t}/2^{j+i_0+1}\leq\|v^L\|<e^{t}/2^{j+i_0}.$ We need to consider two cases

\textit{Case 1.} Assume that $\|v^{L_0}\|>2^{j\eta_2},$ where $\eta_2$ is as in lemma~\ref{fewbadsubspaces(2,1)}.

Now using lemma~\ref{fewbadsubspaces(2,1)} we see that for all $i\leq i_0$ the number of null subspaces $L,$ with $e^{t}/2^{j+i+1}\leq\|v^L\|<e^{t}/2^{j+i}$ such that $\{\langle v^L,\hh \pfrak\xi\rb\}<1/2^{j},$ is bounded by $e^t/2^{(1+\eta)j+i}.$ Also by our assumption in case 1 for all $i>i_0$ the subspaces of norm at most $e^t/2^{i+j}$ have no contribution to the set $\afrak_t(1/2^j)$ i.e. $\afrak_t^L(1/2^j)=\emptyset$ for all such subspaces $L$.

Hence if we use this fact and~(\ref{intervals(2,1)}) we have: If $j$ is such that case 1 holds then we have
\begin{equation}\label{genericcase(2,1)}|\bigcup_{L\hh\mbox{null}}\afrak_t^L(\frac{1}{2^j})|\leq \sum_i\sum_{\frac{e^t}{2^{i+j+1}}\leq\|v^L\|<\frac{e^t}{2^{i+j}}}|\afrak_t^L(\frac{1}{2^j})|\leq\frac{1}{2^{j(1+\eta)}}\sum_i {2^{-i/2}}\leq C\frac{1}{2^{j(1+\eta)}}\end{equation}

\textit{Case 2.} Assume $\|v^{L_0}\|\leq\frac{1}{2^{j\eta_2}},$ where $\eta_2$ is as in case 1. Note that there are at most two subspaces with this property as was shown in lemma~\ref{fewbadsubspaces(2,1)}. We need to consider the following possibilities

(i) $M\leq e^t/2^{i_0+j}\leq2^{j\eta_2}.$ We then have
\begin{equation}\label{exceptionalsubspace1(2,1)}|\afrak_t^{L_k}(\frac{1}{2^j})|\leq Ce^{-t}(e^{-t}\frac{e^t}{2^{i_0+j}})^{-1/2}\frac{1}{2^{j/2}}\leq C\frac{e^{-t/2}}{\sqrt M2^{j/2}},\h\h\h k=0,1\end{equation}
Now if we argue just as in case 1, we get
\begin{equation}\label{nongenericcase1(2,1)}|\bigcup_{L\hh\mbox{null}}\afrak_t^L(\frac{1}{2^j})|\leq \sum_i\sum_{\frac{e^t}{2^{i+j+1}}\leq\|v^L\|<\frac{e^t}{2^{i+j}}}|\afrak_t^L(\frac{1}{2^j})|\leq C(\frac{e^{-t/2}}{\sqrt M2^{j/2}}+\frac{1}{2^{j(1+\eta)}})\end{equation}

(ii) $e^t/2^{i_0+j}\leq2^{j\eta_2}\leq M.$ Note that we have $i_k=i_k(j),$ where $k=0,1$ and for each $j$ there are at most two such $i_k$'s. We will denote by $L(j,i_k)$ the subspace corresponding to $i_k=i_k(j),$ for $k=0,1.$ We have
\begin{equation}\label{nongenericcase2}|\afrak_t^{L(j,i_k)}(\frac{1}{2^j})|\leq Ce^{-t}2^{\frac{i_k(j)}{2}}\leq Ce^{-t/2},\hspace{4mm} k=0,1\end{equation}

(iii) $e^t/2^{i_0+j}\leq M\leq2^{j\eta_2}.$ Given $M,$ this can only hold for at most two subspace $L^0_M$ and $L^1_M$ furthermore there exists $\ell=\ell(M)$ such that if this holds for $j\geq\ell$ then these are exceptional subspaces which we have excluded already. Hence we assume this holds only for $j<\ell.$ We have
\begin{equation}\label{nongenericcase3(2,1)} |\afrak_t^{L^k_M}(\frac{1}{2^j})|\leq|I_t^{L_M}(1/2^j)|\leq e^{-t/2}2^{-j/2}\h\h\mbox{for}\h\h j<\ell\h\h\mbox{and}\h\h k=0,1\end{equation}

We collect these estimates in the following
\begin{itemize}
\item[(I)] $\sum_{s<j\leq t}2^j\frac{1}{2^{j(1+\eta)}}\leq \frac{1}{2^{s\eta}}$
\item[(II)] $ \sum_{s<j\leq t}2^j(\frac{e^{-t/2}}{\sqrt M2^{j/2}}+\frac{1}{2^{j(1+\eta)}})\leq \frac{1}{\sqrt M}+\frac{1}{2^{s\eta}}$
\item[(III)] $\sum_{s<j\leq\frac{\log M}{\eta}}2^je^{-t/2}\leq M^{\frac{1}{\eta}}e^{-t/2}$
\item[(IV)] $\sum_{s<j<\ell}2^j e^{-\frac{t}{2}}2^{-\frac{j}{2}}\leq e^{-\frac{t}{2}}2^{\frac{\ell}{2}}$
\end{itemize}

Fix $\epsilon>0$ arbitrarily small. Let $\eta$ be as before. Then we may choose $M=M(\epsilon)$ and $t=t(M,\epsilon)>0$ and $s=s(\eta,\epsilon)>0$ large enough such that
\begin{equation}\label{unboundedintegralnull(2,1)}\sum_{s<j\leq t}2^j|\bigcup_{L\hh\mbox{null}}\afrak_t^{L}(\frac{1}{2^j})|\leq C(\frac{1}{2^{s\eta}}+\frac{1}{\sqrt M}+M^{\frac{1}{\eta}}e^{-t/2}+e^{-\frac{t}{2}}2^{\frac{\ell}{2}})<\epsilon/4\end{equation}
The proof of the theorem~\ref{unboundedequi(2,1)} is now completed similar to that of theorems~\ref{unbddequinot(2,2)} and~\ref{unboundedequi(2,2)}. Indeed in order to control the unbounded part of the integral one uses remark~\ref{r;(2,1)} and reduces to the study of null subspaces. The required estimate for null subspaces then is provided by~(\ref{unboundedintegralnull(2,1)}) above.



\appendix
\section{Equidistribution of spherical averages}\label{sec:equi}
Let $G$ be a connected Lie group and $\Gamma$ a lattice in $G.$ we let $\pi$ denote the natural projection from $G$ onto $G/\Gamma.$ Let $H$ be a connected semisimple subgroup of $G$ and let $K$ be the maximal compact subgroup of $H$. The question which is addressed in this section is that of equidistribution of sets of the form $a_tKx,$
where $x\in G/\Gamma$ and $A=\{a_s\h:\h s\in\bbr\}$ is a suitable one parameter subgroup of $H.$ This is a well-studied question. The main tools, in the analysis, are indeed Ratner's theorem on classification of unipotent flow invariant measures on $G/\Gamma$ and linearization techniques for the action of unipotent groups on $G/\Gamma$ which were developed by Dani and Margulis..


Let $H$ and $W$ be closed subgroups of $G.$ Following Dani and Margulis~\cite{DM3} define $X(H,W)=\{g\in G\h:\h Wg\subset gH\}.$ We recall the following

\begin{thm}\label{danimargulis}\cite[Theorem 3]{DM3}
Let $G$ be a connected Lie group and $\Gamma$ a lattice in $G.$ Let $U=\{u_t\}$ be an $\mbox{\rm{Ad}}$-unipotent one parameter subgroup of $G.$ Let $\phi$ be a bounded continuous function on $G/\Gamma.$ Let $\mathcal{D}$ be a compact subset of $G/\Gamma$ and let $\vare>0$ be given. Then there exist finitely many proper closed subgroups $H_1=H_1(\phi,\mathcal{D},\vare),\cdots,H_k=H_k(\phi,\mathcal{D},\vare)$ such that $H_i\cap\Gamma$ is a lattice in $H_i$ for all $i,$ and compact subsets $C_1=C_1(\phi,\mathcal{D},\vare),\cdots,C_k=C_k(\phi,\mathcal{D},\vare)$ of $X(H_1,U),\cdots,X(H_k,U)$ respectively, for which the following holds: For any compact subset $F$ of $\mathcal{D}\setminus\bigcup_iCi\Gamma/\Gamma$ there exists $T_0>0$ such that for all $x\in F$ and $T>T_0$ we have
\begin{equation}\label{bequi}\left|\frac{1}{T}\int_0^T\phi(u_tx)dt-\int_{G/\Gamma}\phi dg\right|<\vare\end{equation}
where $dg$ is the Haar measure on $G/\Gamma.$
\end{thm}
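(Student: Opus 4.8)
The plan is to deduce this from Ratner's theorems on unipotent flows together with the linearization technique for unipotent actions developed by Dani and Margulis. Recall first the algebraic picture behind the exceptional set. Let $\mathcal{H}$ be the (countable) family of closed subgroups $H\subseteq G$ such that $H\cap\Gamma$ is a lattice in $H$ and the group generated by the $\Ad$-unipotent one-parameter subgroups of $G$ contained in $H$ acts ergodically on $H/(H\cap\Gamma)$. By Ratner's measure classification theorem, every $U$-invariant ergodic probability measure on $G/\Gamma$ is the push-forward of the $H$-invariant measure on a closed orbit $gH\Gamma/\Gamma$ for some $H\in\mathcal{H}$ with $g\in X(H,U)$; and by Ratner's uniform distribution theorem, if $x=g\Gamma$ with $g\notin\bigcup_{H\in\mathcal{H},\,H\ne G}X(H,U)\Gamma$, then $\frac1T\int_0^T\phi(u_tx)\,dt\to\int_{G/\Gamma}\phi\,dg$ for every bounded continuous $\phi$. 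Thus the entire content of the theorem is the \emph{uniformity} of this convergence over compact subsets of $\mathcal{D}$ once suitable tubes have been removed, together with the assertion that finitely many tubes with compact cores suffice.

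I would argue by contradiction. If no finite collection $\{(H_i,C_i)\}$ as in the statement works for a given $(\phi,\mathcal{D},\vare)$, then, enumerating $\mathcal{H}$ and exhausting, one obtains $T_n\to\infty$ and points $x_n\in\mathcal{D}$ lying outside a bounded neighbourhood of the images of the first $n$ compact cores, with $\bigl|\frac1{T_n}\int_0^{T_n}\phi(u_tx_n)\,dt-\int_{G/\Gamma}\phi\,dg\bigr|\ge\vare$. Pass to a subsequence with $x_n\to x\in\mathcal{D}$ and form the empirical measures $\mu_n=\frac1{T_n}\int_0^{T_n}(u_t)_*\delta_{x_n}\,dt$. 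By the Dani--Margulis non-divergence theorem for unipotent flows, together with the compactness of $\mathcal{D}$, no mass escapes to infinity, so after a further subsequence $\mu_n\to\mu$ weak-$*$ with $\mu$ a $U$-invariant probability measure satisfying $\int\phi\,d\mu\ne\int\phi\,dg$; in particular $\mu$ is not the $G$-invariant measure. By Ratner's measure classification and ergodic decomposition, $\mu$ then gives positive mass to some tube $X(H,U)\Gamma/\Gamma$ with $H\in\mathcal{H}$, $H\ne G$.

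The crux is the linearization step. Choose a finite-dimensional rational representation $\rho\colon G\to\mathrm{GL}(V)$ and a vector $p_H\in V$ (obtained, for instance, from a basis of the Lie algebra of $H$ sitting inside an exterior power of $\gfr$) so that $H$ is, up to finite index, the stabilizer of $p_H$, the orbit $\rho(\Gamma)p_H$ is discrete, and the tube $X(H,U)\Gamma/\Gamma$ is described by the condition that $\rho(g)p_H$ meet a $U$-invariant subset of $V$. The trajectories $t\mapsto\rho(u_tg)p_H$ are polynomial of bounded degree, so their coordinates are $(C,\alpha)$-good functions. Since $\mu\bigl(X(H,U)\Gamma/\Gamma\bigr)>0$, the trajectories $t\mapsto u_tx_n\Gamma$ must, for all large $n$, spend at least a fixed proportion of $[0,T_n]$ in a fixed small neighbourhood of this tube; transported to $V$, the associated polynomial curves spend a definite proportion of time near the corresponding compact set. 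A Besicovitch-type covering argument combined with the $(C,\alpha)$-good property --- this is exactly the Dani--Margulis linearization lemma --- then forces $x_n$ itself to lie within a bounded enlargement of a \emph{compact} part of the tube, contradicting the choice of $x_n$ once $n$ is large. Read quantitatively over the compact set $\mathcal{D}$, with $\phi$ bounded, the same analysis shows that only finitely many $H\in\mathcal{H}$ can be relevant for $(\phi,\mathcal{D},\vare)$ and that their cores $C_i\subset X(H_i,U)$ may be chosen compact, which is the finiteness assertion.

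The main obstacle is precisely this last step: the geometric bookkeeping in the linearization, namely handling the a priori countably many intermediate subgroups and their tubes, controlling self-intersections, and converting "the orbit spends a positive proportion of time near a tube" into "the basepoint lies in a slightly larger tube", all uniformly over $\mathcal{D}$. A secondary, but itself substantial, input is the uniform non-divergence of unipotent trajectories starting in the compact set $\mathcal{D}$, which is what guarantees that the limit measure $\mu$ is a probability measure rather than having lost mass to infinity.
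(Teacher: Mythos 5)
This statement is Theorem~3 of Dani--Margulis \cite{DM3}; the paper quotes it and does not reprove it, so there is no in-paper argument to compare against. Your sketch --- Ratner's measure classification and uniform distribution, a compactness/contradiction extraction of a non-Haar limit measure $\mu$, identification of a tube $X(H,U)\Gamma/\Gamma$ of positive $\mu$-mass, and then the linearization lemma to force the base points back into a compact piece of that tube --- is the standard route and is consistent with \cite{DM3}. Be aware, though, that what you flag as the ``main obstacle'' is exactly the substance of that paper: the quantitative linearization lemma (turning ``positive proportion of time near a tube'' into ``base point in a bounded enlargement of a compact core,'' with a dichotomy uniform over the compact set), together with the inductive bookkeeping that reduces the a priori countable family $\mathcal{H}$ to finitely many relevant $H_i$ with compact $C_i$. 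As written, your proposal describes that step rather than carrying it out, so it should be read as a correct plan pointing to \cite{DM3} rather than as a self-contained proof.
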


We will apply the above theorem to a special case which we now describe. Let us fix a few notations to be used through out this section. Let $G=\SL_n(\bbr)\ltimes\bbr^n$ where as before we are considering the standard action of $\SL_n(\bbr)$ on $\bbr^n.$ Let $\vartheta$ denote the natural projection from $G$ onto $\SL_n(\bbr).$ Let $n=p+q$ where $p\geq2$ and $q\geq1.$ Let $H=\rm{SO}(p,q)$ and $K=\rm{SO}(p)\times\rm{SO}(q),$ and let $H^0$ denote the identity component of $H.$ We let $H_1\subset H$ be the subgroup that fixes $\{e_3,\cdots,e_{n-1}\}$ then $H_1\cong\rm{SO}(2,1).$ Hence there is a homomorphism $\rho$ with finite kernel from $\SL_2(\bbr)$ to $H_1\subset H^0$ such that $A=\{a_s\h:\h s\in\bbr\}=\rho(\rm{diag}(e^{s/2},e^{-s/2}))$ is a self-adjoint one-parameter subgroup of $\rm{SO}(2,1)$ and $U=\{u_t\h:\h t\in\bbr\}=\rho(\begin{array}{cc}1 & t\\ 0 & 1\end{array})$ is the corresponding expanding horospherical subgroup for $s>0.$

Note that the standard representation of $H^0$ on $\bbr^n$ is irreducible hence $H^0$ is a maximal connected subgroup of $H\ltimes\bbr^n.$ Note also that as $H^0$ is a maximal connected subgroup of $\SL_n(\bbr)$ we have that $H^0\ltimes\bbr^n$ is a maximal connected subgroup of $G.$

Let now $\Gamma$ be a lattice in $G.$ Then $\Gamma\cap\bbr^n$ is a lattice in $\bbr^n.$ We let $\Delta=\vartheta(\Gamma)=\Gamma/\Gamma\cap\bbr^n,$ this is a lattice in $\SL_n(\bbr).$ We will abuse the notation and let $\vartheta$ also denote the projection from $G/\Gamma$ onto $\SL_n(\bbr)/\Delta.$ We have

\begin{lem}
Let $x\in G/\Gamma$ then the orbit $H\vartheta(x)$ is closed in $\SL_n(\bbr)/\Delta$ if and only if $H\ltimes\bbr^n\hh x$ is closed in $G/\Gamma.$
\end{lem}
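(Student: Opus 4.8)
The plan is to show that the $(H\ltimes\bbr^n)$-orbit of a point $x\in G/\Gamma$ is exactly the full preimage of the $H$-orbit of $\vartheta(x)$ under the natural projection $\vartheta\colon G/\Gamma\to\SL_n(\bbr)/\Delta$, and then to transport closedness across $\vartheta$. First I would record the elementary topological input: with $N=\bbr^n$, both $G\to G/\Gamma$ and $G\to G/N\Gamma\cong\SL_n(\bbr)/\Delta$ are open surjections, and they factor the continuous surjection $\vartheta$; hence $\vartheta$ is an open continuous surjection, i.e. a quotient map, so a subset of $\SL_n(\bbr)/\Delta$ is closed precisely when its $\vartheta$-preimage is closed. (Equivalently, one may note $\vartheta$ is proper, since $\Gamma\cap N$ is a cocompact lattice in the normal subgroup $N$, so the fibres of $\vartheta$ are compact tori.) Thus the whole lemma reduces to the orbit identity
\[(H\ltimes\bbr^n)\cdot x=\vartheta^{-1}\big(H\hh\vartheta(x)\big),\qquad x\in G/\Gamma .\]

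Next I would prove this identity by a direct computation in the semidirect product. Write $x=g\Gamma$ and $A=\vartheta(g)\in\SL_n(\bbr)$, so $\vartheta(x)=A\Delta$. The multiplication rule gives $(h,v)(A,w)=(hA,\,v+hw)$ for $h\in H$, $v\in\bbr^n$; since $v+hw$ ranges over all of $\bbr^n$ as $v$ does, one gets $(H\ltimes\bbr^n)g\Gamma=\{(hA,u)\Gamma:h\in H,\ u\in\bbr^n\}$, which in particular depends only on $A=\vartheta(g)$. Applying $\vartheta$ to this set and using $\vartheta(H\ltimes\bbr^n)=H$ shows the inclusion $(H\ltimes\bbr^n)x\subseteq\vartheta^{-1}(H\vartheta(x))$. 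For the reverse inclusion, take $g'\Gamma$ with $\vartheta(g')\in HA\Delta$, write $\vartheta(g')=hA\gamma$ with $h\in H$, $\gamma\in\Delta$, and lift $\gamma$ to some $\tilde\gamma\in\Gamma$; then $\vartheta(g'\tilde\gamma^{-1})=hA$, and since $\ker\vartheta=N$ this forces $g'\tilde\gamma^{-1}=(hA,u)$ for some $u\in\bbr^n$, so $g'\Gamma=(hA,u)\Gamma$ lies in $(H\ltimes\bbr^n)x$ by the previous sentence.

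Putting the pieces together: if $H\vartheta(x)$ is closed, then $(H\ltimes\bbr^n)x=\vartheta^{-1}(H\vartheta(x))$ is closed by continuity of $\vartheta$; and if $(H\ltimes\bbr^n)x=\vartheta^{-1}(H\vartheta(x))$ is closed, then $H\vartheta(x)$ is closed because $\vartheta$ is a quotient map. I do not anticipate any essential difficulty; the only mildly technical point is the formal property of the projection $\vartheta$ (that it is a quotient map, or proper), which is the standard fact about quotients of $G$ by a lattice modulo a normal subgroup, and everything else is routine bookkeeping inside $\SL_n(\bbr)\ltimes\bbr^n$.
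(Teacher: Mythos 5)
Your argument is correct and it departs from the paper's proof in an interesting way. You and the paper share the orbit identity $(H\ltimes\bbr^n)\cdot x=\vartheta^{-1}\bigl(H\hh\vartheta(x)\bigr)$, and both proofs use it together with continuity of $\vartheta$ to pass from closedness of $H\vartheta(x)$ to closedness of $(H\ltimes\bbr^n)\hh x$. The divergence is in the converse direction. You observe that $\vartheta\colon G/\Gamma\to\SL_n(\bbr)/\Delta$ is an open (equivalently, proper, with fibres the compact tori $\bbr^n/(\Gamma\cap\bbr^n)$) continuous surjection, hence a quotient map, so the orbit identity gives the reverse implication for free by pure point-set topology. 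The paper instead invokes a measure-theoretic input: it quotes from Margulis's 1986 article that a closed $(H\ltimes\bbr^n)$-orbit carries a finite invariant measure, so the stabilizer $\Gamma_1=H\ltimes\bbr^n\cap(g,v)\Gamma(g,v)^{-1}$ is a lattice; it then descends this lattice through the exact sequence $\bbr^n\to H\ltimes\bbr^n\to H$ to a lattice in $H$, and concludes closedness of $H\vartheta(x)$ from the (nontrivial) fact that finite-covolume stabilizers correspond to closed orbits in this algebraic setting. Your route avoids that machinery entirely and is more self-contained; what it requires instead is the elementary observation that $\vartheta$ is a quotient map, which you justify correctly from openness of the quotient projections and of the surjective homomorphism $G\to\SL_n(\bbr)$. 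The paper's route has the side benefit of recording, en passant, that the corresponding invariant probability measures exist and descend in the same way, which is information used later; yours does not produce that, but the lemma as stated only asks for the topological equivalence, which you establish cleanly.
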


\begin{proof}
Note that closed orbits of $H\ltimes\bbr^n$  (resp. $H$) have a finite $H\ltimes\bbr^n$-invariant (resp. $H$-invariant) measure by section 3 in~\cite{Mar1}. Let $x=(g,v)\Gamma.$ Note that $H\ltimes\bbr^n\hh x=\vartheta^{-1}(H\vartheta(x))$ hence if the $H\vartheta(x)$ is closed so is $H\ltimes\bbr^n\hh x$. Suppose now that $\hrn\hh x$ is closed. Then $\Gamma_1=H\ltimes\bbr^n\cap(g,v)\Gamma(g,v)^{-1}$ is a lattice in $H\ltimes\bbr^n$ and hence $\Gamma_1\cap\bbr^n$ is a lattice in $\bbr^n$ and $\Gamma_1/\Gamma_1\cap\bbr^n$ is a lattice in $H.$ Hence $H\vartheta(x)$ is closed, as we wanted.
\end{proof}

The following is special case of theorem 4.4 in~\cite{EMM1}.

\begin{thm}\label{bequidistribution1}
Let the notation be as above. Further assume $\Lambda$ is a lattice in $H^0\ltimes\bbr^n.$ Let $\phi$ be a compactly supported continuous function on $H^0\ltimes\bbr^n/\Lambda.$ Then for every $\vare>0$ and any bounded measurable function $\nu$ on $K$ and every compact subset $\mathcal{D}$ of $H^0\ltimes\bbr^n/\Lambda$ there exist finitely many points $x_1,\cdots,x_\ell\in H^0\ltimes\bbr^n/\Lambda$ such that
\begin{itemize}
\item[(i)] the orbit $H^0\hh x_i$ is closed and has finite $H^0$-invariant measure, for all $i,$
\item[(ii)]  for any compact set $F\subset\mathcal{D}\setminus\bigcup_iH\hh x_i$ there exists $s_0>0$ such that for all $x\in F$ and $s>s_0$
\begin{equation}\label{bwaverage}\left|\int_K\phi(a_skx)\hh\nu(k)\hh dk-\int_{H^0\ltimes\bbr^n/\Lambda}\phi\hh dg\int_K\nu\hh dk\right|\leq\vare\end{equation}
\end{itemize}
\end{thm}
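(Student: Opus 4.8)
The plan is to deduce the statement from the Dani--Margulis equidistribution theorem (Theorem~\ref{danimargulis}), applied to the Lie group $H^0\ltimes\bbr^n$, its lattice $\Lambda$, and the $\Ad$-unipotent one-parameter subgroup $U=\{u_t\}$ fixed in this section, following the argument of~\cite[\S4]{EMM1}; the passage to the semidirect product is the only new feature, and it is harmless because $\bbr^n$ is the unipotent radical of $H^0\ltimes\bbr^n$ (and of $G$), so closed orbits and their finite invariant measures are controlled exactly as in the Lemma preceding Theorem~\ref{bequidistribution1}, via~\cite[\S3]{Mar1}. First I would dispose of the weight: since $\nu$ is bounded and measurable it can be approximated in $L^1(K)$ by finite linear combinations of indicator functions of small open sets, so it suffices to treat $\nu=\chi_V$ for a small neighborhood $V$ of a point $k_0\in K$; finitely many such $V$ cover $K$ by compactness, and the exceptional points produced for the various $V$ are collected into the final list $x_1,\dots,x_\ell$.

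The central step is to ``unfold'' the spherical average over $V$ onto a long orbit piece of a unipotent subgroup. Writing $k\in V$ as $k=\exp(W)k_0$ with $W$ in a small ball of $\mathfrak{k}$, we have $a_skx=\exp(\Ad(a_s)W)\,a_sk_0x$. Since $\{a_s\}=\rho(\diag(e^{s/2},e^{-s/2}))$ is self-adjoint, hence $\Ad$-semisimple, and $U$ lies in its expanding horospherical subgroup, $\Ad(a_s)$ carries a ball in $\mathfrak{k}$ --- modulo the neutral directions in the centralizer of $\{a_s\}$, which commute with $a_s$ --- onto an expanding box in the Lie algebra of the full expanding horospherical subgroup $U^+$ of $\{a_s\}$ in $H^0\ltimes\bbr^n$. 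After the corresponding change of variables on $V$, the integral $\int_V\phi(a_skx)\,dk$ equals, up to an error tending to $0$ as $s\to\infty$ uniformly for $x$ in a fixed compact set, a normalized average of $\phi$ over a box in the $U^+$-orbit of $a_sk_0x$ whose diameter grows with $s$; this is where the compact support (hence uniform continuity and boundedness) of $\phi$ is used. One now either applies the linearization method directly to this $K$-average as in~\cite[\S4]{EMM1}, or first reduces by a Fubini argument over a spanning family of $\Ad$-unipotent one-parameter subgroups of $U^+$ to ordinary one-parameter averages $\tfrac{1}{T}\int_0^T\phi(u_ty)\,dt$ with $y$ in the translates $a_sk_0x$ and $T\to\infty$, to which Theorem~\ref{danimargulis} applies.

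I would then feed this into Theorem~\ref{danimargulis}, taking the ambient group there to be $H^0\ltimes\bbr^n$, the test function $\phi$, a compact set slightly larger than $\mathcal{D}$, and a suitable $\vare'$; it produces finitely many proper closed subgroups $H_i\subset H^0\ltimes\bbr^n$ with $H_i\cap\Lambda$ a lattice, and compact sets $C_i\subset X(H_i,U)$ off the tubes $C_i\Lambda/\Lambda$ around which the one-parameter averages are within $\vare'$ of $\int_{H^0\ltimes\bbr^n/\Lambda}\phi\,dg$. Tracing the unfolding backwards, the set of $x\in\mathcal{D}$ for which the spherical average can fail the desired estimate for arbitrarily large $s$ is contained in the set of $x$ such that, for some $i$, some representative $k_0$ and a sequence $s\to\infty$, the translate $a_sk_0x$ lies in $C_i\Lambda/\Lambda$. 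Because this must persist along a sequence $s\to\infty$ and for all representatives in use, and because $\{a_s\}$ together with $K$ generates $H^0$ (the isotropy representation of $K$ on $\mathfrak{p}$ being irreducible for $p\geq2$, $q\geq1$), the standard linearization arguments of~\cite{DM3} and~\cite{EMM1} show this bad set is contained in a finite union of orbits $H^0x_i$, each of them proper --- the $H_i$ being proper and $H^0$ maximal among connected subgroups --- hence closed with finite $H^0$-invariant measure. Choosing $F$ in the complement of a neighborhood of $\bigcup_iH^0x_i$ then forces all the unfolded one-parameter averages to equidistribute, and resumming the pieces over the finite cover of $K$ yields~(\ref{bwaverage}).

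The main obstacle is exactly this combination: making the change of variables on $K$ quantitative and uniform in $x$, and then using Ratner's measure classification together with the polynomial-divergence and linearization estimates of Dani and Margulis to see that the points at which equidistribution fails assemble into $K$- and $\{a_s\}$-stable families, that is, into finitely many closed $H^0$-orbits. Granting Theorem~\ref{danimargulis} in the stated generality, the bookkeeping needed to pass from $H^0$ to $H^0\ltimes\bbr^n$ is routine, which is why the result can be cited as a special case of~\cite[Theorem 4.4]{EMM1}.
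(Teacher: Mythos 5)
Your proof takes the same route the paper does: the paper dispatches this theorem with a one-line citation to \cite[Theorem 4.4]{EMM1}, and your sketch reconstructs precisely that argument — applying the Dani--Margulis linearization theorem (Theorem~\ref{danimargulis}) with ambient group $H^0\ltimes\bbr^n$ and lattice $\Lambda$, passing from unipotent to spherical averages by conjugation under $a_s$, and using that $H^0$ is maximal connected in $H^0\ltimes\bbr^n$ (irreducibility of the standard representation) to identify the exceptional set with a finite union of closed $H^0$-orbits of finite invariant measure. This mirrors exactly the structure the paper itself writes out for the companion Theorem~\ref{bequidistribution2}, so the proposal is correct and essentially the paper's (implicit) argument.
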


We will also need a slight variant of theorem 4.4 in~\cite{EMM1}. This is the content of the following

\begin{thm}\label{bequidistribution2}
Let $G,$ $H,$ $K$ and $\Gamma$ be as above. Let $A=\{a_s\h:\h s\in\bbr\}$ be as above also. Let $\phi$ be a compactly supported continuous function on $G/\Gamma.$ Then for every $\vare>0$ and any bounded measurable function $\nu$ on $K$ and every compact subset $\mathcal{D}$ of $G/\Gamma$ there exists finitely many points $x_1,\cdots,x_\ell\in G/\Gamma$ such that
\begin{itemize}
\item[(i)] the orbit $H\ltimes\bbr^n\hh x_i$ is closed and has finite $H\ltimes\bbr^n$-invariant measure, for all $i,$
\item[(ii)]  for any compact set $F\subset\mathcal{D}\setminus\bigcup_iH\ltimes\bbr^nx_i$ there exists $s_0>0$ such that for all $x\in F$ and $s>s_0$
\begin{equation}\label{bwaverage}\left|\int_K\phi(a_skx)\hh\nu(k)\hh dk-\int_{G/\Gamma}\phi\hh dg\int_K\nu\hh dk\right|\leq\vare\end{equation}
\end{itemize}
\end{thm}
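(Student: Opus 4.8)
The plan is to follow the proof of Theorem~4.4 in~\cite{EMM1} essentially verbatim; the only point that needs re-examination is which proper subgroups produce the exceptional orbits, and this is controlled by the fact (noted above) that the standard representation of $H^0$ on $\bbr^n$ is irreducible, so that $H^0\ltimes\bbr^n$ is a maximal connected subgroup of $G$ and $H^0$ is maximal connected in $\SL_n(\bbr)$. We may assume $\nu\ge0$ with $\int_K\nu\,dk=1$ and $\phi\ge0$; write $m$ for the $G$-invariant probability measure on $G/\Gamma$ and, for $x\in G/\Gamma$, let $\theta^x_s$ be the probability measure $\psi\mapsto\int_K\psi(a_skx)\,\nu(k)\,dk$ on $G/\Gamma$. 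It suffices to produce finitely many points $x_1,\dots,x_\ell$ with $H\ltimes\bbr^n\!\cdot x_i$ closed such that $\theta^x_s(\phi)\to m(\phi)$ as $s\to\infty$, uniformly for $x$ ranging over compact subsets of $\mathcal{D}$ avoiding $\bigcup_i H\ltimes\bbr^n\!\cdot x_i$. The first step is to unfold the spherical average into an average over long pieces of the expanding horospherical subgroup $U=\{u_t\}$ of $\{a_s\}$ inside $H_1\cong\rm{SO}(2,1)$. Disintegrating the Haar measure of $K$ over $R\backslash K$, where $R=\rm{SO}(2)$ is the maximal compact of $H_1$, and using the elementary Bruhat-type identity expressing $a_s r$ (with $r\in R$) in terms of $u_t$ after a change of variables, one obtains for every compactly supported continuous $\psi$ on $G/\Gamma$
\[
\theta^x_s(\psi)=\int_{R\backslash K}\Bigl(\tfrac{1}{T(s)}\int_0^{T(s)}\psi\bigl(u_t\,y_s(\bar k,x)\bigr)\,dt\Bigr)\,d\bar\nu(\bar k)+o(1)\qquad(s\to\infty),
\]
with $T(s)\to\infty$, $\bar\nu$ the pushforward of $\nu$ to $R\backslash K$, and $y_s(\bar k,x)$ staying in a fixed compact neighbourhood of $a_s\bar k x$; the error is uniform for $x$ in compacta. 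This is precisely the computation of~\cite[\S4]{EMM1}.

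The second step applies the Dani--Margulis quantitative equidistribution theorem, Theorem~\ref{danimargulis}, to the $\Ad$-unipotent one-parameter subgroup $U$ acting on $G/\Gamma$, with the bounded continuous function $\phi$, accuracy $\vare/2$, and a compact set $\mathcal{D}'$ containing all the base points $y_s(R\backslash K,\mathcal{D})$. This yields proper closed subgroups $H_1,\dots,H_k$, with $H_j\cap\Gamma$ a lattice in $H_j$, and compact sets $C_j\subset X(H_j,U)$, off the union $\bigcup_j C_j\Gamma/\Gamma$ of which the averages $\tfrac1T\int_0^T\phi(u_ty)\,dt$ converge to $m(\phi)$, uniformly on compact subsets and for $T$ large. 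Substituting this into the displayed identity shows $\theta^x_s(\phi)\to m(\phi)$, uniformly, as soon as the base points $y_s(\bar k,x)$ eventually stay out of a fixed neighbourhood of $\bigcup_j C_j\Gamma/\Gamma$ for $\bar\nu$-almost every $\bar k$; equivalently, as soon as $x$ lies outside the sets obtained by pulling that neighbourhood back through $x\mapsto y_s(\bar k,x)$ and intersecting over large $s$.

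The third step identifies these pullback sets. Using the linearisation method for the action of $U$ on the linear representations of $G$ attached to the subgroups $H_j$, exactly as in Dani--Margulis and~\cite{EMM1}, together with the maximality of $H^0$ in $\SL_n(\bbr)$ and of $H^0\ltimes\bbr^n$ in $G$, one checks that as $s\to\infty$ each pullback set collapses into a finite union of closed $H\ltimes\bbr^n$-orbits that meet $\mathcal{D}$; the finiteness is inherited from the finiteness of the list $H_1,\dots,H_k$ produced by Theorem~\ref{danimargulis}, using the lemma that a closed $H\ltimes\bbr^n$-orbit in $G/\Gamma$ corresponds under $\vartheta$ to a closed $H$-orbit in $\SL_n(\bbr)/\SL_n(\bbz)$. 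Taking $x_1,\dots,x_\ell$ to be orbit representatives of all the orbits so obtained, and combining step one (with $o(1)<\vare/2$ for $s$ large) with step two, gives conclusions (i) and (ii).

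I expect the third step to be the main obstacle: one must control, uniformly in the large parameter $s$, the way the exceptional locus $\bigcup_j C_j\Gamma/\Gamma$ of the unipotent flow is pulled back through the unfolding of the spherical average, and verify that the result is absorbed into finitely many closed $H\ltimes\bbr^n$-orbits rather than into a larger family of homogeneous subsets; this is precisely where the quantitative linearisation machinery of~\cite{EMM1} is used. By contrast, the passage from the ambient group $H^0\ltimes\bbr^n$ of Theorem~\ref{bequidistribution1} to $G=\SL_n(\bbr)\ltimes\bbr^n$ costs nothing beyond re-reading which connected subgroups can occur, because Theorem~\ref{danimargulis} is already formulated for an arbitrary connected Lie group and lattice.
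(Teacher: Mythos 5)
Your proposal follows essentially the same route as the paper: apply the Dani--Margulis theorem to the unipotent $U\subset H_1$, unfold the spherical average as in~\cite[\S4]{EMM1}, and use the maximality of $H^0\ltimes\bbr^n$ in $G$ (coming from irreducibility of the standard representation of $H^0$) to show the exceptional set collapses to finitely many closed $H\ltimes\bbr^n$-orbits. The worry you flag in your third step --- controlling, uniformly in $s$, how the singular set is pulled back through the unfolding --- is resolved in the paper more directly than through fresh linearisation: one observes that the set $\mathcal{G}_i=\{g:Kg\subset X(H_i,U)\}$ is contained in finitely many cosets of $H^0\ltimes\bbr^n$ (since $\bigcup_{k\in K}k^{-1}Uk$ generates $H^0$, and $gH_i^0g^{-1}$ must then contain $H^0$, hence lie in $H^0\ltimes\bbr^n$ by maximality), that for any $x$ outside these orbits the $K$-set $\{k:kx\in\bigcup_i X(H_i,U)\Gamma\}$ has measure zero because each $X(H_i,U)$ is an analytic submanifold and $K$ is connected, and then invokes Lemma~4.2 of~\cite{EMM1} to pass to an open neighbourhood $W$ of $\bigcup_iC_i\Gamma/\Gamma$ with $|\{k:kx\in W\}|<\vare$; the rest is the standard unfolding argument from~\cite[Thm.~4.4]{EMM1}. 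So your plan is sound, and the missing detail in your step three is precisely the $\mathcal{G}_i$-cum-Lemma~4.2 argument.
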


\begin{proof}
The proof of this theorem goes along the same lines as in section 4 in~\cite{EMM1}. Let $U$ be as defined above. Let $H_i=H_i(\phi,K\mathcal{D},\vare)$ and $C_i=C_i(\phi,K\mathcal{D},\vare)$ for $i=1,\cdots,k$ be given as in theorem~\ref{danimargulis} corresponding to $U$. For $1\leq i\leq k$ define
\begin{equation}\label{kexceptinal}\mathcal{G}_i=\{g\in G\h:\h Kg\subset X(H_i,U)\}\end{equation}
Note that the group generated by $\bigcup_{k\in K}k^{-1}Uk$ is $H^0$ as $U$ is not contained in any proper normal subgroup of $H$ and $K$ is the maximal compact subgroup of $H.$ Let now $g\in \mathcal{G}_i$ then $k^{-1}Uk\subset gH_ig^{-1}$ for all $k\in K.$ Hence $H^0\subset gH_i^0g^{-1}.$ Now as $H^0$ acts irreducibly on $\bbr^n$ the only possibilities for $gH_i^0g^{-1}$ are $H^0$ and $H^0\ltimes\bbr^n.$
Thus we have
\begin{equation}\label{almostmaximal}gH_i^0g^{-1}\subset H^0\ltimes\bbr^n\hspace{4mm}{\rm{for \h any}}\h\hh g\in \mathcal{G}_i,\hspace{4mm}1\leq i\leq k\end{equation}
We also note that if $g\in G$ is such that $gH^0g^{-1}\subset H^0\ltimes\bbr^n$ then $g\in N_{\SL_n(\bbr)}(H^0)\ltimes\bbr^n.$ This fact and (\ref{almostmaximal}) say that if $g_1,g_2\in\mathcal{G}_i$ for some $i$ then $g_1^{-1}g_2\in N_{\SL_n(\bbr)}(H^0)\ltimes\bbr^n.$ Since $H^0\ltimes\bbr^n$ is of finite index in $N_{\SL_n(\bbr)}(H)\ltimes\bbr^n$ we get $\mathcal{G}_i$ can be covered by finitely many co sets of $H^0\ltimes\bbr^n.$ Hence there are finitely many points $x_1,\cdots,x_\ell\in G/\Gamma$ such that $H\ltimes\bbr^nx_i$'s are closed and have a finite $H\ltimes\bbr^n$-invariant measure for all $1\leq i\leq\ell$ and that
\begin{equation}\label{closedorbits}\bigcup_i\mathcal{G}_i\Gamma/\Gamma\subset\bigcup_{1\leq i\leq\ell}H\ltimes\bbr^nx_i\end{equation}
Note that since $X(H_i,U)$'s are analytic submanifolds of $G$ and $K$ is a connected we have that for any $g\in G\setminus\bigcup_i\mathcal{G}_i\Gamma/\Gamma$
\begin{equation}\label{singularmeasurezero1}|\{k\in K\h:\h kg\in\bigcup_iX(H_i,U)\Gamma\}|=0\end{equation}
Now (\ref{closedorbits}), (\ref{singularmeasurezero1}) and the fact that $C_i\subset X(H_i,U)$ give
\begin{equation}\label{singularmeasurezero2}|\{k\in K\h:\h kx\in\bigcup_iC_i\Gamma/\Gamma\}|=0\end{equation}
for any $x\in F\subset\mathcal{D}\setminus\bigcup_iH\ltimes\bbr^nx_i.$
Now if we apply lemma 4.2 in~\cite{EMM1} then there exists and open subset $W\subset G/\Gamma$ such that $\bigcup_iC_i\Gamma/\Gamma\subset W$ and $|\{k\in K\h:\h kx\in W\}|<\vare.$ Recall that $C_i=C_i(\phi,K\mathcal{D},\vare),$ hence there exists $T_0>0$ such that
\begin{equation}\label{unipotentaverage1}\left|\frac{1}{T}\int_0^T\phi(u_ty)-\int_{G/\Gamma}\phi dg\right|<\vare\end{equation}
for all $y\in K\mathcal{D}\setminus W.$ So we have
\begin{equation}\label{unipotentaverage2}\left|\frac{1}{T}\int_0^T\int_K\phi(u_tkx)-\int_{G/\Gamma}\phi dg\right|\int_K\nu dk\leq\sup_{k\in K}|\nu(k)|(1+2\sup_{y\in G/\Gamma}|\phi(y)|)\hh\vare\end{equation}
The rest of the argument is mutatis mutandis of the proof of theorem 4.4 in~\cite{EMM1} replacing theorem 4.3 in loc. cit. by what we proved above.
\end{proof}

\section{Number of quasinull subspaces}~\label{sec:counting}
This section is devoted to the proof of proposition~\ref{counting}. We will need results proved in~\cite[Section 10]{EMM2}. We will recall the statements in here for the convenience of the reader.

Consider the bilinear form in $6$-variables $Q^{(6)}(v,w)=v\wedge w.$ Then $V_1$ and $V_2$ in lemma~\ref{linearalgebra} are orthogonal with respect to this form and the restriction of $Q^{(6)}$ to $V_i$ has signature $(2,1).$ The following is a proved in the course of the proof of theorem 10.4 in~\cite{EMM2}.
\begin{prop}\label{rationalspace}{\rm{(cf.~\cite[Section 10]{EMM2})}}
Let $\tau>0$ be any sufficiently small number and let $T>2$. There exists a rational three dimensional subspace $U$ of $\bigwedge^2\bbr^4.$ With a reduced integral basis of norm at most $T^\tau$ whose projections into $V_2$ have norm less than $T^{\tau-1}$ such that one of the following holds
\begin{itemize}
\item[(a)] The restriction of $Q^{(6)}$ to $U$ is anisotropic over $\bbq.$ In which case (i) in theorem~\ref{approximation} holds.
\item[(b)] The restriction of $Q^{(6)}$ to $U$ splits over $\bbq.$ In which case (ii) in theorem~\ref{approximation} holds. Furthermore in this case $Q'$ as in loc. cit. is proportional to $f(U,U^\perp)$ where $U^\perp$ is the orthogonal complement of $U$ with respect to $Q^{(6)}$ and $f$ is as in lemma~\ref{linearalgebra}. Moreover the number of quasinull subspaces with norm between $T/2$ and $T$ which are not in $U$ or $U^\perp$ is $O(T^{1-\tau}).$
\end{itemize}
\end{prop}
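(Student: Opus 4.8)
The plan is to transfer the problem, via the Pl\"ucker embedding, to a counting problem for lattice vectors in a dilated unimodular lattice in $\bigwedge^2\bbr^4\cong\bbr^6$, and then to run the system of inequalities of~\cite{EMM1} on that lattice, following~\cite[\S10]{EMM2}. Fix an inner product on $\bigwedge^2\bbr^4$ under which the $\mathrm{SO}(B)$-invariant subspaces $V_1,V_2$ of lemma~\ref{linearalgebra} are orthogonal, so that $\|v^L\|^2=\|\pi_1(v^L)\|^2+\|\pi_2(v^L)\|^2$ for every rational $2$-plane $L$; recall also that a vector $v\in\bigwedge^2\bbr^4$ is decomposable if and only if $Q^{(6)}(v,v)=v\wedge v=0$, a single scalar equation. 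A rational $2$-plane $L$ is quasinull of the first type with $T/2\le\|v^L\|\le T$ exactly when $\|\pi_1(v^L)\|$ is comparable to $T$, $\|\pi_2(v^L)\|\le\mu_1/T$, and (by decomposability and the orthogonality of $V_1,V_2$) $|Q^{(6)}(\pi_1(v^L),\pi_1(v^L))|\ll 1/T^2$. Choosing $g_T\in\mathrm{SL}_6(\bbr)$ preserving the decomposition $V_1\oplus V_2$ so that these first-type vectors are carried into a fixed bounded region $\mathcal R=\mathcal R(\mu_1)\subset\bbr^6$, and setting $\Delta_T=g_T(\bigwedge^2\bbz^4)$, a unimodular lattice in $\bbr^6$, the first-type quasinull subspaces of norm in $[T/2,T]$ are in bijection, up to the sign of $v^L$, with the primitive decomposable vectors $w$ of $\bigwedge^2\bbz^4$ whose image $g_Tw$ lies in $\mathcal R$; interchanging $V_1$ and $V_2$ disposes of the second type, so it suffices to bound the number of such $w$.

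First I would apply the subspace-extraction mechanism underlying the inequalities of~\cite{EMM1} --- the same mechanism that forces integrability of the $\alpha_i$ when $p\ge3$ --- to the unimodular lattice $\Delta_T\subset\bbr^6$ inside the region $\mathcal R$. This yields a dichotomy: either the number of primitive decomposable vectors of $\Delta_T$ in $\mathcal R$ is $O(T^{1-\tau})$; or there is a proper nonzero rational subspace of $\Delta_T$ in which all but $O(T^{1-\tau})$ of these vectors lie. Undoing $g_T$, the witnessing subspace has the form $g_TU$ for a rational subspace $U\subseteq\bigwedge^2\bbr^4$, and keeping track of the quantitative content of the inequalities one arranges that $U$ admits a reduced integral basis of norm at most $T^{\tau}$ whose projections into $V_2$ have norm at most $T^{\tau-1}$. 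A short analysis of the possible dimension and of the signature of $Q^{(6)}$ restricted to the candidate subspaces --- a nondegenerate restriction of rank at most $2$ carries only $O(1)$ decomposable lattice points, whereas a genuinely three-dimensional, $\bbq$-isotropic ternary restriction may carry many --- lets one take $\dim U=3$; in the first alternative any rational $3$-plane with the stated basis properties serves. So assume henceforth that we are in the second alternative, with its subspace $U$.

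Next I would split according to $Q^{(6)}|_U$. If $Q^{(6)}|_U$ is anisotropic over $\bbq$, then $U$ contains no nonzero rational null vector of $Q^{(6)}$, hence no decomposable vector of $\bigwedge^2\bbz^4$ whatsoever; since all but $O(T^{1-\tau})$ of the first-type quasinull $v^L$ with $\|v^L\|\in[T/2,T]$ were forced into $U$, there are only $O(T^{1-\tau})$ of them, and likewise for the second type by symmetry --- which is precisely assertion (i) of theorem~\ref{approximation}. If $Q^{(6)}|_U$ splits over $\bbq$, let $U^\perp$ be its $Q^{(6)}$-orthogonal complement; by lemma~\ref{linearalgebra}(iii) the form $f(U,U^\perp)$ is split over $\bbq$, and one takes for the form $Q'$ of theorem~\ref{approximation}(ii) an integral form proportional to $f(U,U^\perp)$. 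Since every basis vector of $U$ has norm at most $T^{\tau}$ with $V_2$-component at most $T^{\tau-1}$, each unit vector of $U$ has $V_2$-component of norm at most $T^{\tau-1}$, so the pair $(U,U^\perp)$ lies within $O(T^{-1+c\tau})$ of $(V_1,V_2)$ for an absolute constant $c$; as $f$ is a polynomial map defined over $\bbq$ with $f(V_1,V_2)=Q$, this produces $Q'$ with coefficients bounded by a fixed power of $T^{\tau}$ and a scalar $1\le\lambda\in\bbr$ satisfying $\|Q-\tfrac{1}{\lambda} Q'\|\le T^{-\rho}$ for an absolute $\rho>0$, provided $\tau$ is small enough. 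Finally, through $f$ each null subspace of $Q'$ lies either ``in $U$'' (first type) or ``in $U^\perp$'' (second type); if more than $O(T^{1-\tau})$ quasinull subspaces of norm in $[T/2,T]$ lay outside $U\cup U^\perp$, re-running the extraction on this leftover collection would produce a second rational $3$-plane with the same near-$V_1$ (or near-$V_2$) geometry as $U$ or $U^\perp$, contradicting the essential uniqueness of such a plane (two of them would meet in a subspace of small covolume, forcing them equal after possibly passing to orthogonal complements). This establishes the last clause of (b), completing the proof.

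The main obstacle will be the second step: faithfully porting the~\cite{EMM1} inequalities to the lattice $\Delta_T$ in $\bbr^6$ and extracting from the resulting flag of responsible rational subspaces a single three-dimensional $U$ carrying a genuine reduced integral basis of size $T^{\tau}$ with $V_2$-projections of size $T^{\tau-1}$; essentially all of the quantitative bookkeeping --- the dimension and signature casework, and the concluding uniqueness argument that controls the residual count --- is concentrated there.
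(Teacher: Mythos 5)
A preliminary remark on the comparison: the paper itself offers no proof of this proposition; it is recalled from~\cite{EMM2} (``proved in the course of the proof of Theorem 10.4'' there), and the only indication given of the method is that the system of inequalities of~\cite{EMM1} is applied to a dilated lattice in $\bigwedge^2\bbr^4\cong\bbr^6$. Your outline does follow that same route (Pl\"ucker embedding, dilation $g_T$, unimodular lattice $\Delta_T$, extraction of a responsible rational subspace), so in spirit you are reproducing the~\cite{EMM2} argument rather than inventing a different one.

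The genuine gap is in your first step. The ``dichotomy'' you invoke --- either $O(T^{1-\tau})$ primitive decomposable vectors of $\Delta_T$ in the fixed region $\mathcal R$, or a proper rational subspace, which you then take to be three dimensional, with a reduced integral basis of norm at most $T^{\tau}$ whose $V_2$-projections have norm at most $T^{\tau-1}$, containing all but $O(T^{1-\tau})$ of them --- is not an off-the-shelf consequence of the ``subspace-extraction mechanism'' of~\cite{EMM1}. It is precisely the quantitative content of the proposition being proved, and in~\cite[Section 10]{EMM2} it occupies a chain of lemmas: one must control the functions $\alpha_i$ of the dilated lattice, identify the dimension of the subspace responsible for the failure of the $O(T^{1-\tau})$ bound, rule out the other dimensions, and convert covolume estimates into the stated bounds on a reduced basis and on its $V_2$-components. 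You state this as an output and then yourself flag it as ``the main obstacle,'' so as written the proposal assumes the heart of the statement rather than proving it. Secondarily, the closing step of (b) is also only asserted: two distinct rational $3$-planes of $\bbr^6$ can intersect in $\{0\}$, so ``they would meet in a subspace of small covolume, forcing them equal'' is not automatic; one needs an actual height/angle argument (two rational $3$-planes with bases of height at most $T^{\tau}$, both within $O(T^{\tau-1})$ of $V_1$, coincide once $\tau$ is small enough), and one also needs to argue that the plane produced for the second-type vectors is $U^\perp$ itself and not merely some rational $3$-plane near $V_2$. The intermediate case analysis (anisotropic versus split restriction, $Q'$ proportional to $f(U,U^\perp)$ via lemma~\ref{linearalgebra}, the estimate $\|Q-\tfrac{1}{\lambda}Q'\|\le T^{-\rho}$, and the use of lemma~\ref{linearalgebra}(iv) to say null subspaces of $Q'$ have $v^L\in U\cup U^\perp$) is sound modulo those missing steps.
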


Let us denote by $Q^{(3)}(v)$ the restriction of $Q^{(6)}$ to $U.$ As we mentioned before this is a form with signature $(2,1)$ and $U$ is a rational subspace. If $L$ is a quasinull subspace with $T/2\leq\|v^L\|\leq T$ then we have $Q^{(6)}(v^L)=0.$ Hence if $L$ is a quasinull subspace in $U$ then $Q^{(3)}(v^L)=0.$ In other words the proposition~\ref{counting} will follow from the following

\begin{prop}\label{counting2}
Let $Q_0(x,y,z)=2xz-y^2$ be the standard quadratic form of signature $(2,1)$ on $\bbr^3.$ Let  $\Delta=g\bbz^3$ where $g\in{\rm{GL}}_3(\bbq).$ Let $T>0$ a large parameter and let $\tau$ be a sufficiently small parameter. Assume that the entries of $g$ are rational numbers whose nominator and denominators are bounded by a fixed power of $T^\tau$ and also suppose that $|{\rm{det}}\hh g|\leq T^\tau.$ Further assume that the length of the shortest vector in $\Delta$ is $c=O(1).$ Then if $T$ is large enough we have
\begin{equation}\label{e:counting2}\#\{w\in\mathcal{P}(\Delta)\h:\h Q_0(w)=0\h\h\mbox{and}\h\h \|w\|\leq T\}<O(T)\end{equation}
where $\mathcal{P}(\Delta)$ denote the set of primitive vectors in $\Delta.$ \end{prop}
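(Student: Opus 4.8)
The plan is to describe explicitly all the $Q_0$‑null vectors contained in $\Delta$ by means of the rational parametrization of the null cone of $Q_0$, and thereby to reduce the count to the number of integer points of a disk of radius $\asymp T^{1/2}$. Since $Q_0(x,y,z)=2xz-y^2$ is split over $\bbq$, its null cone is the image of the base‑point‑free quadratic map $\Phi\colon\bbr^2\to\bbr^3$, $\Phi(\alpha,\beta)=(2\alpha^2,\,2\alpha\beta,\,\beta^2)$: one has $Q_0(\Phi(\alpha,\beta))\equiv0$ and $\|\Phi(\alpha,\beta)\|\asymp\alpha^2+\beta^2$ with absolute implied constants. Every nonzero $Q_0$‑null vector in $\bbq^3$ spans a line $\bbr\,\Phi(m,n)$ for a coprime pair $(m,n)\in\bbz^2$, unique up to sign; and, because $g\in\mathrm{GL}_3(\bbq)$, each such line meets $\Delta$ in a rank‑one subgroup, with a unique primitive generator $w_{m,n}$ up to sign. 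Hence
$$\#\bigl\{w\in\mathcal P(\Delta):\ Q_0(w)=0,\ \|w\|\le T\bigr\}\ =\ \tfrac12\,\#\bigl\{(m,n)\in\bbz^2:\ \gcd(m,n)=1,\ \|w_{m,n}\|\le T\bigr\},$$
so it suffices to bound the right‑hand side.

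To make $w_{m,n}$ explicit I would clear denominators: write $g^{-1}=\tfrac1D\mathfrak a$ with $\mathfrak a\in M_3(\bbz)$ and $D\ge1$ the common denominator, so that by the hypotheses on $g$ both $D$ and $\|\mathfrak a\|$ are $\le T^{O(\tau)}$. The primitive integer vector on the line $\bbr\,\mathfrak a\Phi(m,n)$ is $\mathfrak a\Phi(m,n)/e_{m,n}$, where $e_{m,n}:=\gcd\bigl((\mathfrak a\Phi(m,n))_1,(\mathfrak a\Phi(m,n))_2,(\mathfrak a\Phi(m,n))_3\bigr)\ge1$, and $g^{-1}w_{m,n}=\pm\,\mathfrak a\Phi(m,n)/e_{m,n}$; since $g\mathfrak a=D\cdot\mathrm{Id}$ this gives
$$w_{m,n}\ =\ \pm\,\frac{D}{e_{m,n}}\,\Phi(m,n),\qquad \|w_{m,n}\|\ \asymp\ \frac{D}{e_{m,n}}\,(m^2+n^2).$$
Thus $\|w_{m,n}\|\le T$ forces $m^2+n^2\le c\,\dfrac{e_{m,n}}{D}\,T$ for an absolute $c$, and everything hinges on bounding the content $e_{m,n}$.

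The content estimate is the technical core. The three integral binary quadratic forms $\psi_i(\alpha,\beta):=(\mathfrak a\Phi(\alpha,\beta))_i$ have no common zero on $\bbp^1(\overline{\bbq})$ — the components of $\Phi$ have none, and $\mathfrak a$ is invertible over $\bbq$ — so one may choose a pair, say $\psi_1,\psi_2$, coprime in $\bbq[\alpha,\beta]$ (were no pair coprime, the three $\psi_i$ would share a linear factor, contradicting base‑point‑freeness). Then $(\alpha,\beta)^N\subset(\psi_1,\psi_2)$ in $\bbq[\alpha,\beta]$ for some $N$, and clearing denominators yields identities $C\,\alpha^i\beta^{N-i}=p_i\psi_1+q_i\psi_2$ with $p_i,q_i\in\bbz[\alpha,\beta]$ and $C$ a fixed nonzero integer, a divisor of a power of $\mathrm{Res}(\psi_1,\psi_2)$ and hence a polynomial in the entries of $\mathfrak a$, so $|C|\le T^{O(\tau)}$. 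Evaluating at a coprime pair $(m,n)$ gives $e_{m,n}\mid C\,\gcd_i(m^in^{N-i})=C$; in particular $1\le e_{m,n}\le|C|$ for every $(m,n)$, and $e_{m,n}$ is supported on the finitely many primes dividing $C$. Substituting back, $m^2+n^2\le c\,\dfrac{|C|}{D}\,T$, and since a disk of radius $R$ contains $O(R)$ integer points, the count is $O\!\bigl(\tfrac{|C|}{D}\,T\bigr)=O(T)$; if a sharper dependence is wanted one sums instead over the divisors $d=e_{m,n}$ of $C$, using that for $d>1$ the congruences $d\mid\psi_i(m,n)$ cut the disk of radius $\asymp(dT/D)^{1/2}$ down by a density $O(\rho(d)/d^2)$, with $\rho(d)$ controlled by the number of solutions of $\psi_1\equiv\psi_2\equiv0$ modulo prime powers dividing $C$.

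I expect the main obstacle to be exactly this content estimate: proving that $e_{m,n}$ divides a fixed integer (with the correct dependence on the height of $g$) uses essentially the effectivity of the Nullstellensatz for the coprime pair $\psi_1,\psi_2$ — equivalently, a resultant computation — together with the hypotheses bounding the entries and determinant of $g$; the hypothesis that the shortest vector of $\Delta$ is $O(1)$ enters to guarantee $\|w_{m,n}\|\ge c\gg0$, so that the counting argument is not vacuous. All remaining steps — the parametrization of the null cone, the passage to the primitive generator on each rational null line, and the final lattice‑point count — are routine.
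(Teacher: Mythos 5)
Your approach is genuinely different from the paper's — and more elementary in spirit — but it has a real gap at the final step, and a smaller one along the way.

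\textbf{Comparison.} The paper proves the count by group theory and dynamics: it rescales $\Delta$ to a unimodular lattice, uses reduction theory to locate a primitive null vector $v_0$, invokes Borel--Harish-Chandra and Witt's theorem to see that all primitive null vectors of the rescaled lattice lie in finitely many $\Gamma_m$-orbits, and then counts lattice points in $\Gamma_m v_0\cap\mathcal C_R$ by the mixing/equidistribution machinery, getting $O(R/\|v_0\|)=O(T/c)$ with $c$ the shortest vector. You instead parametrize the null cone by $\Phi(m,n)=(2m^2,2mn,n^2)$, identify the primitive vector of $\Delta$ on each null line as $w_{m,n}=\pm\frac{D}{e_{m,n}}\Phi(m,n)$, and try to count the coprime $(m,n)$ with $\frac{D}{e_{m,n}}(m^2+n^2)\le T$ using resultants to control the content $e_{m,n}$. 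Up to the content estimate, this is correct and quite clean.

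\textbf{Minor gap.} Your claim that some pair $\psi_i,\psi_j$ is coprime in $\bbq[\alpha,\beta]$ does not follow from base-point-freeness: with $\psi_1=L_1L_2,\ \psi_2=L_1L_3,\ \psi_3=L_2L_3$ for three distinct linear forms, every pair shares a factor yet the triple has no common zero. This is repairable (apply effective Nullstellensatz to the full triple, not a pair), but as written the step is wrong.

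\textbf{Main gap.} The conclusion ``the count is $O(|C|T/D)=O(T)$'' is not justified and is in fact false in general: $|C|$ is a fixed power of a resultant of forms with coefficients of size $T^{O(\tau)}$, so $|C|/D$ can itself be as large as $T^{O(\tau)}$, giving only $O(T^{1+O(\tau)})$. Concretely, for $g=\mathrm{diag}(k,1/k,1)$ one has $D=k$ and $|C|\asymp k^2$, so the crude bound $|C|T/D\asymp kT$ is far worse than the true count, which is $O(T/k)$. Your sketched refinement — summing over divisors $d$ of $C$ with density $\rho(d)/d^2$ — is pointed in the right direction, but the resulting bound is $\frac{T}{D}\sum_{d\mid C}\rho(d)/d$, and you never establish that this sum is $O(D)$. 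That is exactly where the shortest-vector hypothesis must enter nontrivially (along with the observation that $\rho(p^a)\ne0$ forces $p^a\mid\det\mathfrak a$ up to a factor of $2$, since $(2m^2,2mn,n^2)$ is near-primitive), yet you only use the shortest-vector hypothesis to remark that $\|w_{m,n}\|\gg1$, which plays no role in the count. Without carrying out the divisor sum with these inputs, the proposal does not recover the clean $O(T)$; and since the application in the proof of Theorem~\ref{unboundedequi(2,2)} uses the genuine $O(T)$ bound in a geometric-series argument, a loss of $T^{O(\tau)}$ is not obviously harmless. This is the step the paper bypasses entirely by moving to the arithmetic-group setting.
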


\begin{proof}
Let $H\subset\SL_3(\bbr)$ be the subgroup which preserves the form $Q_0$. Note that $H$ is a $\bbq$-group and that $H\cong\mbox{SO}(2,1)$ as $\bbq$-groups. Let $\Gamma=\mbox{SO}(Q_0)(\bbz).$ Let $\mathcal{C}=\{w\in \bbr^3\h:\h Q_0(w)=0\}$ be the light cone of $Q_0$ and let $\mathcal{C}_R=\{w\in\mathcal{C}\h:\h R/2\leq\|w\|\leq R\},$ for any $R>0.$ Note that $\mathcal{C}$ is dilation invariant. Let $\lambda=\mbox{disc}(\Delta)^{1/3}$ and define $\Delta_1=\frac{1}{\lambda}\Delta.$ The lattice $\Delta_1$ is unimodular and the length of the shortest vector in $\Delta_1$ is at least $\frac{c}{\lambda}$. Let $\bar{g}\in\mbox{PGL}_3(\bbq)$ denote the image of $g.$ Indeed $\Delta_1=\bar g\bbz^3.$ The counting problem in~(\ref{e:counting2}) will follow if we show that
\begin{equation}\label{e:counting3}\#\{w\in\mathcal{P}(\Delta_1)\h:\h Q_0(w)=0\h\h\mbox{and}\h\h \|w\|\leq \frac{T}{\lambda}\}<O(T)\end{equation}
Let $\Gamma_g=H\cap g\mbox{SL}_3(\bbz)g^{-1}.$ Since $g$ is rational matrix $\Gamma_g$ is a lattice in $H.$ Further we have that the form $Q_g=gQ_0g^t$ is a split form over $\bbq$ and hence $\Gamma_g$ is a nonuniform lattice. Let $\Gamma_m$ be a maximal lattice such that $\Gamma_g\subset \Gamma_m.$ There are only finitely many classes of maximal nonuniform arithmetic lattices in $H.$ Hence we will assume $\Gamma_m$ is in the same class as $\Gamma$ in the rest of the argument. Thus $\Gamma_m=h_m\Gamma h_m^{-1}$ where $h_m\in\overline{H}(\bbq)$ and $\overline{H}$ is the adjoint form of $H.$ Now using reduction theory of the orthogonal group there is a universal constant $C$ such that we can find $v\in \mathcal{P}(h_m\bbz^3)\cap\mathcal{C}$ with $\|v\|<C.$ We also have the shortest vector in $h_m\bbz^3$ is at least $T^{-k\tau}$ where $k$ is a fixed number.

Since both $\bar g$ and $h_m$ are in $\mbox{PGL}_3(\bbq)$ we can find a primitive vector $v_0\in\mathcal{P}(\Delta_1)$ which is a scalar multiple of $v.$ Note that $\|v_0\|\geq\frac{c}{\lambda}.$ Let $P=\mbox{Stab}(\bbr v)$ be a maximal $\bbq$-parabolic subgroup of $H.$ Let $P=AN$ be the Levi decomposition of $P.$ We have $N\cdot v=v$ and $A$ acts on $v$ by a character. By a theorem of Borel and Harish-Chandra there is a finite set $\Xi\subset H(\bbq)$ such that $H(\bbq)=\Gamma_m \Xi P(\bbq).$ Thus the $\Gamma_m$-orbits are characterized by the values of the character on $A(\bbq)$. By Witt's theorem we have that $H(\bbq)$ acts transitively on the rational points on the cone $\mathcal{C}.$ Hence we see that $\mathcal{C}\cap\mathcal{P}(\Delta_1)\subset\Gamma_m\Xi v_0.$ As $\Xi$ is a finite set we only need to consider $\Gamma_m v_0.$

Let $e_1=(1,0,0)$ and let $N_0\subset H$ be the stabilizer of $e_1.$ Let $B_R=\{h\in H\h:\h h\cdot e_1\in\mathcal{C}_R\}$ and let $\mathcal{B}_R=B_R/N_0.$
Let now $\chi_R$ denote the characteristic function of $\mathcal{C}_R$ Define the following
\begin{equation}\label{thetasum}F_R(h)=\sum_{\Gamma_m/\Gamma_m\cap N}\chi_R(h\gamma v)\hspace{3mm}\mbox{for}\h\h h\in H\end{equation}

This is a function on $H/\Gamma.$ Now using the well-developed machinery for counting problems using mixing property, see in particular~\cite{BO} and~\cite{MS}, there exists $\vare>0$ depending only on the spectral gap of $H$ and $\tau$ such that
\begin{equation}\label{e:counting4}F_R(e)=\frac{|N_0/N_0\cap\Gamma|}{|H/\Gamma|}\mbox{vol}(\mathcal{B}_R)(1+O(\mbox{vol}(\mathcal{B}_R)^{-\vare}))\end{equation}
where $\mbox{vol}$ denotes the $H$ invariant Haar measure on $H/N.$ Hence we have
\begin{equation}\label{e:counting}\#(\Gamma_m v_0\cap \mathcal{C}_R)\leq F_{R/\|v_0\|}(e)=O(R/\|v_0\|)\end{equation}
Since $\|v_0\|\geq\frac{c}{\lambda},$ this finishes the proof.
\end{proof}


\end{document}